\newcommand{\ew}{\epsilon}
\DeclareMathOperator{\lcm}{lcm}
\renewcommand{\le}{\varleq}
\renewcommand{\ge}{\vargeq}
\newcommand\mL{L\kern-0.08cm\char39}
\newcommand{\myforall}{\text{ for all }}
\newcommand{\myand}{\text{ and }}
\newcommand{\seb}{\{\,}
\newcommand{\sen}{\,\}}
\newcommand{\getsby}[1]{\xleftarrow{#1}}
\newcommand{\Acal}{\mathcal{A}}
\newcommand{\Gcal}{\mathcal{G}}
\newcommand{\Ucal}{\mathcal{U}}
\newcommand{\Vcal}{\mathcal{V}}
\newcommand{\kuu}{\emptyset}
\newcommand{\nekuu}{\neq \kuu}
\newcommand{\iskuu}{= \kuu}
\newcommand{\fai}{\varphi}
\newcommand{\bl}{\boldsymbol{l}}
\newcommand{\bx}{\boldsymbol{x}}
\newcommand{\barf}{\bar{f}}
\newcommand{\barx}{\bar{x}}
\newcommand{\barB}{\bar{B}}
\newcommand{\barE}{\bar{E}}
\newcommand{\barF}{\bar{F}}
\newcommand{\barT}{\bar{T}}
\newcommand{\barU}{\bar{U}}
\newcommand{\barV}{\bar{V}}
\newcommand{\barW}{\bar{W}}
\newcommand{\barX}{\bar{X}}
\newcommand{\barfai}{\bar{\fai}}
\newcommand{\barrho}{\bar{\rho}}
\newcommand{\fg}[1]{{\widetilde{#1}}}
\newcommand{\fgG}{\fg{G}}
\newcommand{\fgV}{\fg{V}}
\newcommand{\fgE}{\fg{E}}
\newcommand{\fgW}{\fg{W}}
\newcommand{\fge}{\fg{e}}
\newcommand{\fgGcal}{\fg{\Gcal}}
\newcommand{\wg}[1]{\overline{#1}}
\newcommand{\wgG}{\wg{G}}
\newcommand{\wgV}{\wg{V}}
\newcommand{\wgE}{\wg{E}}
\newcommand{\wgW}{\wg{W}}
\newcommand{\wge}{\wg{e}}
\newcommand{\wgp}{\wg{p}}
\newcommand{\wgv}{\wg{v}}
\newcommand{\wgGcal}{\wg{\Gcal}}
\newcommand{\bai}[1]{\check{#1}}
\newcommand{\baiG}{\bai{G}}
\newcommand{\baiV}{\bai{V}}
\newcommand{\baiE}{\bai{E}}
\newcommand{\baie}{\bai{e}}
\newcommand{\baiu}{\bai{u}}
\newcommand{\baiv}{\bai{v}}
\newcommand{\baifai}{\bai{\fai}}
\newcommand{\baiGcal}{\bai{\Gcal}}
\newcommand{\invlim}{\varprojlim}
\newcommand{\tesgh}{edge-surjective graph homomorphism}
\newcommand{\pdirectional}{\raise0.05em\hbox{$+$}directional}
\newcommand{\pdirectionality}{\raise0.05em\hbox{$+$}directionality}
\newcommand{\pdirectionalitys}{\raise0.05em\hbox{$+$}directionality }
\newcommand{\pdirectionals}{\raise0.05em\hbox{$+$}directional }
\newcommand{\mdirectional}{\raise0.05em\hbox{$-$}directional}
\newcommand{\mdirectionality}{\raise0.05em\hbox{$-$}directionality}
\newcommand{\mdirectionalitys}{\raise0.05em\hbox{$-$}directionality }
\newcommand{\mdirectionals}{\raise0.05em\hbox{$-$}directional }
\newcommand{\bidirectional}{bidirectional}
\newcommand{\bidirectionals}{bidirectional }
\newcommand{\bidirectionality}{bidirectionality}
\newcommand{\bidirectionalitys}{bidirectionality }
\newcommand{\Z}{\mathbb{Z}}
\newcommand{\R}{\mathbb{R}}
\newcommand{\Nonne}{\mathbb{N}}
\newcommand{\Posint}{\mathbb{N}^+}
\newcommand{\bi}{\in \Z}
\newcommand{\bpi}{\ge 1}
\newcommand{\benonne}{\in \Nonne}
\newcommand{\bni}{\ge 0}
\newcommand{\diam}{{\rm diam}}
\newcommand{\dist}{{\rm dist}}
\newcommand{\limf}{\lim_f}
\newcommand{\liml}{\lim_{\hspace{0.2ex}l}}
\newcommand{\ep}{\varepsilon}
\newcommand{\sL}{\mathscr{L}}
\newcommand{\hf}{\hat{f}}
\newcommand{\hx}{\hat{x}}
\newcommand{\hX}{\hat{X}}
\newcommand{\ddf}{\ddot{f}}
\newcommand{\ddx}{\ddot{x}}
\newcommand{\ddX}{\ddot{X}}
\newcommand{\centb}{\begin{center}}
\newcommand{\centn}{\end{center}}
\newcommand{\enumb}{\begin{enumerate}}
\newcommand{\enumn}{\end{enumerate}}
\newcommand{\itemb}{\begin{itemize}}
\newcommand{\itemn}{\end{itemize}}
\numberwithin{equation}{section}
\setlist[enumerate,1]{label=(\alph*),ref=(\alph*)}
\setlist[enumerate,2]{label=(\arabic*),ref=(\alph{enumi}-\arabic{enumii})}
\setlist[enumerate,3]{label=(\Alph*),ref=(\roman{enumi}-\alph{enumii}-\Alph*)}
\setlist[enumerate,4]{label=(\arabic*),ref=(\roman{enumi}-\alph{enumii}-\Alph{enumiii}-\arabic*)}
\newlist{deepenum}{enumerate}{1}
\setlist[deepenum,1]{label=(\alph*$'$),ref=(\alph*$'$)}
\newlist{Lminusoneenum}{enumerate}{1}
\setlist[Lminusoneenum,1]{label=($2$:\alph*),ref=($2$:\alph*)}
\newlist{Ldeepenum}{enumerate}{1}
\setlist[Ldeepenum,1]{label=($3$:\alph*),ref=($3$:\alph*)}
\newlist{Lddeepenum}{enumerate}{1}
\setlist[Lddeepenum,1]{label=($4$:\alph*),ref=($4$:\alph*)}
\newtheorem{thm}{Theorem}[section]
\newtheorem{lem}[thm]{Lemma}
\newtheorem{prop}[thm]{Proposition}
\newtheorem{cor}[thm]{Corollary}
\theoremstyle{definition}
\newtheorem{defn}[thm]{Definition}
\theoremstyle{remark}
\newtheorem{nota}[thm]{Notation}
\newtheorem{rem}[thm]{Remark}
\crefname{sec}{\S}{\S\S}
\crefname{mainthm}{Theorem}{Theorems}
\crefname{thm}{Theorem}{Theorems}
\crefname{lem}{Lemma}{Lemmas}
\crefname{prop}{Proposition}{Propositions}
\crefname{cor}{Corollary}{Corollaries}
\crefname{defn}{Definition}{Definitions}
\crefname{conj}{Conjecture}{Conjectures}
\crefname{example}{Example}{Examples}
\crefname{nota}{Notation}{Notations}
\crefname{rem}{Remark}{Remarks}
\crefname{note}{Note}{Notes}
\crefname{case}{Case}{Cases}
\crefname{figure}{Figure}{Figures}
\crefname{section}{\S}{\S\S}
\crefname{enumi}{}{}
\crefname{enumii}{}{}
\crefname{equation}{}{}
\newcommand{\abs}[1]{\lvert#1\rvert}
\newcommand{\Vp}{V \setminus V_0}
\newcommand{\covrepa}[2]{#1_0 \getsby{#2_1} #1_1 \getsby{#2_2} #1_2 \getsby{#2_3} \dotsb}
\newcommand{\viatheargument}{via the argument in \cref{rem:principle}}
\begin{document}
  
\title[Bratteli--Vershik models and graph covering]{Bratteli--Vershik models and graph\\ covering models}

\author{Takashi Shimomura}

\address{Nagoya University of Economics, Uchikubo 61-1, Inuyama 484-8504, Japan}
\curraddr{}
\email{tkshimo@nagoya-ku.ac.jp}
\thanks{}

\subjclass[2010]{Primary 37B10, 54H20.}

\keywords{graph covering, zero-dimensional, Bratteli--Vershik, substitution subshifts}

\date{\today}

\dedicatory{}

\commby{}

\begin{abstract}
Based on our previous graph covering method, we introduce weighted graph covering models
 and flexible graph covering models
 that are almost equivalent to the well-known Bratteli--Vershik models.
These models play important roles in showing that every invertible dynamical system on
 compact metrizable zero-dimensional space
 admits a non-trivial Bratteli--Vershik model and a basic set.
We can also obtain an analogue of Krieger's lemma
 for compact metrizable zero-dimensional systems.
The flexible graph covering models enable us to consider ``stationary''
 graph covering models, by which some portion of the substitution subshifts can be expressed. As an application, we show a way of constructing some class of transitive substitution subshifts.
\end{abstract}

\maketitle

\section{Introduction}\label{sec:introduction}
In connection with
 the $K$-theory for $C^*$-algebras, Herman, Putnam, and Skau \cite{HERMAN_1992OrdBratteliDiagDimGroupTopDyn} showed that an 
 essentially minimal invertible Cantor system admits
 the simple ordered Bratteli—Vershik model. 
Subsequently, there have been numerous studies on the Bratteli--Vershik models. 
For instance, 
 Medynets \cite{Medynets_2006CantorAperSysBratDiag} showed that
 aperiodic Cantor systems admit Bratteli--Vershik models,
 where the basic sets
 (see \cref{defn:basic-set})
 coincide with
 the sets of maximal (minimal) paths of
 the ordered Bratteli diagrams.
This discovery diversified the study of Bratteli--Vershik
 models whose basic sets are not single points.
Considering systems with periodic points,
 Downarowicz and Karpel
 \cite{DownarowiczKarpel_2016DynamicsInDimensionZeroASurvey}
 stated that an ordered Bratteli diagram is \textit{decisive} if the
 Vershik map can be prolonged in a unique way to a homeomorphism, and
 referred to zero-dimensional dynamical systems
 as \textit{Bratteli--Vershikizable} if they are 
 conjugate to the extended Vershik map of a decisive ordered Bratteli diagram.
Moreover, 
  Downarowicz and Karpel \cite{DownarowiczKarpel_2017DecisiveBratteliVershikmodels}
improved upon these results by showing that
 an invertible zero-dimensional system
 $(X,f)$ is Bratteli--Vershikizable if and only if the set of aperiodic points
 is dense or its closure misses one periodic orbit.

In this paper, we show that all invertible zero-dimensional systems
 admit non-trivial Bratteli--Vershik models.
We can derive \cref{cor:KriegerLemma},
 a version of Krieger's Marker Lemma, for all invertible zero-dimensional
 systems.
We use the graph covering model developed
 in \cite{Shimomura_2014SpecialHomeoApproxCantorSys}, and introduce
 two different types of graph covering models.
The first type is
the \textit{weighted graph covering model},
 in which each edge is weighted
 with a positive integer (length).
When we reduce the length (formally, but not substantially),
 we obtain a \textit{flexible graph covering model},
 which is the second type of model.
The relation between the Bratteli--Vershik models and new graph covering
 models is studied systematically.
Furthermore, these new models allow us to consider stationary graph covering models
 with links to the stationary Bratteli--Vershik models,
 and to make contact using substitution subshifts
 that may not be primitive.
We note that Durand, Host, and Skau
 \cite{DURAND_1999SubstDynSysBratteliDiagDimGroup}
 have shown that the primitive substitution subshifts are related to
 stationary, properly ordered Bratteli--Vershik systems.

A \textit{zero-dimensional system}
 denotes a pair $(X,f)$ of
 a compact metrizable totally disconnected space $X$ and a continuous 
 surjective map $f : X \to X$.
If $X$ is homeomorphic to the Cantor set,
 then the zero-dimensional system is said to be a \textit{Cantor system}.
We mainly consider the invertible case, i.e. $f$ is a homeomorphism.

We rename the graph covering developed in 
 \cite{Shimomura_2014SpecialHomeoApproxCantorSys}
 as the \textit{basic graph covering model}.
This is because we have introduced the two new types of graph covering models.
For each zero-dimensional system $(X,f)$,
 there exists a basic graph covering model that
 can be translated as 
 a trivial Bratteli--Vershik model, in which
 every point $x \in X$ might be maximal and also minimal.
By introducing weighted and flexible graph covering models,
 we can show that invertible zero-dimensional systems always
 have non-trivial Bratteli--Vershik models; hence, the set of maximal (minimal)
 paths forms a basic set.
Graph covering models have been investigated by a number of researchers.
One study, conducted by Bernardes and Darji 
 \cite{BernardesDarji_2012GraphTheoreticStructureOfMapsOfTheCantorSpace},
investigated co-meagre conjugacy classes of Cantor systems.
Their paper provides further references to some important works in this field.
Following our work \cite{Shimomura_2014SpecialHomeoApproxCantorSys},
 Fern{\'a}ndez, Good, and Puljiz
 \cite{FernandezGoodPuljiz_2017AlmostMinimalSystemsAndPeriodicityInHyper}
 constructed an almost totally minimal homeomorphism of the Cantor set,
 and Boro{\'{n}}ski, Kupka, and Oprocha \cite{BoronskiKupkaOprocha_2017AMixingCompletelyScrambledSystemExists}
 showed that there exists a completely scrambled topologically mixing system.
The latter result was then used to show that there exists
 a minimal weakly mixing invertible Cantor system $(X,f)$
 that can be embedded in $\R$ with vanishing derivative everywhere \cite{BoronskiKupkaOprocha_2018EdreisConjectureRevisited}. These studies indicate that the graph covering models themselves are interesting structures worthy of further research.

The remainder of this paper is organized as follows. In \cref{sec:basic-covering}, we define
 the basic graph covering models and state some basic facts
 that will be used later.
In \cref{sec:weighted-covering}, we introduce
 weighted and flexible graph covering models
 (see \cref{defn:weighted-graph,defn:weighted-and-flexible-graph-cover}).
Using the weighted graph covering model,
 we then define a set $\wgV_{\infty}$ (see \cref{nota:Vinfty})
and derive a necessary and sufficient condition for $\wgV_{\infty}$
 to be a basic set
 (see
 \cref{defn:basic-set,thm:closing-implies-basic-set-wg}).
Without explicitly assigning a positive integer to each edge,
 we obtain a rather abstract
 notion of flexible graph covering models
 (see \cref{defn:flexible-graph,defn:weighted-and-flexible-graph-cover}).
These notions of weighted and flexible graph covering models are
 precisely included in
 the notion of the Bratteli--Vershik models
 such that the ordered Bratteli diagrams can have multiple maximal (minimal) 
 paths.
In this relation,
 our graph covering models always produce continuous Vershik maps.
The main part of our main result is given in \cref{sec:weighted-covering}.
In particular, we derive a zero-dimensional version of Krieger's Marker Lemma
 \cite[(2.2) Lemma (Krieger)]{Boyle_1984LowerEntroFactOfSoficSys}
 (see \cref{cor:KriegerLemma}).

The relations with the Bratteli--Vershik models
 are described in \cref{sec:link}.
We believe that some additional regularity conditions
 should be included in the 
 Bratteli--Vershik models.
We introduce the \textit{closing property}
 (see
 \cref{defn:closing-BV}),
under which the set of minimal paths
 $E_{0,\infty,\min}$ is a basic set
 (see \cref{thm:closing-implies-basic-set-BV}).
With an arbitrary sequence of positive integers
 $\bl : l_1 < l_2 < \dotsb$,
 another regularity condition
 is obtained such that each tower of height less than or equal to $l_n$
 must have a periodic orbit, and
 the least period must match the height of the tower.
We refer to this condition as
 $\bl$-\textit{periodicity-regulated}
 (see \cref{defn:periodicity-regulated-BV}).
In general, this condition is strictly stronger than the closing property.
We show that an arbitrary invertible zero-dimensional system
 admits the Bratteli--Vershik model that satisfies
 the latter regularity condition (see \cref{thm:Bratteli-Vershik-for-all}).
We derive the next theorem from its counterparts for weighted graph covering models
 (see
 \cref{defn:closing-weighted-covering,defn:periodicity-regulated-wg,thm:main}).
\begin{thm}\label{thm:Bratteli-Vershik-for-all}
Let $(X,f)$ be a homeomorphic topological dynamical system, where $X$ is a
compact metrizable zero-dimensional set.
Let $\bl : l_1 < l_2 < \dotsb$ be a sequence of positive integers.
Then, $(X,f)$ admits an $\bl$-periodicity-regulated Bratteli--Vershik model.
\end{thm}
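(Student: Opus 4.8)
The plan is to decompose the argument along exactly the two intermediate results advertised in the introduction. First I would invoke \cref{thm:main}: every homeomorphic compact metrizable zero-dimensional system $(X,f)$ is the inverse limit of a weighted graph covering which is $\bl$-periodicity-regulated (and, as a byproduct, closing in the sense of \cref{defn:closing-weighted-covering}). Then I would invoke \cref{thm:main-link-wg-to-BV} to convert such a weighted graph covering into an ordered Bratteli diagram and to check that the $\bl$-periodicity-regulated property is carried over to the resulting Bratteli--Vershik model in the sense of \cref{defn:periodicity-regulated-BV}. Granting both, the theorem is immediate by composition, so essentially all the content lives in those two steps; what follows is how I would attack them.

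For the weighted-covering step, I would start from the representation of $(X,f)$ as the inverse limit of a \emph{basic} graph covering supplied by \cite{Shimomura_2014SpecialHomeoApproxCantorSys} and recalled in \cref{sec:basic-covering}, and then reorganize it into a Kakutani--Rohlin-type presentation driven by the prescribed sequence $\bl$. The idea is to choose, at stage $i$, a sufficiently fine clopen partition (equivalently, a deep enough telescoping of the basic covering) so that every column of height at most $l_i$ sits over a clopen set $U$ on which $f^{h}=\mathrm{id}_U$, where $h$ is the height of that column, and moreover so that $h$ is the \emph{least} such period. Recording the column heights as the edge weights produces a weighted graph covering, and the $\bl$-periodicity-regulated property is then true by construction, since it says precisely that the weight of each short column equals the least period of the periodic orbit it carries. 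The closing property either falls out of the same telescoping or is imposed simultaneously by the same device, as in \cref{thm:closing-implies-basic-set-BV}.

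For the link step, I would "unwind" the weighted covering into a genuine Bratteli diagram by replacing each edge of weight $m$ with a path of $m$ unit edges, and define the Vershik order on the edges entering each vertex from the local winding data of the flexible-graph description developed in \cref{sec:weighted-covering}. What then needs checking is standard bookkeeping once the dictionary between weighted columns and telescoped path segments is fixed: (i) the ordered path space is conjugate to $(X,f)$ with the Vershik map corresponding to $f$; and (ii) the Bratteli--Vershik towers of height below $l_i$ are in bijection with the short weighted columns of the covering, so the least-period condition transfers verbatim. This is exactly \cref{thm:main-link-wg-to-BV}, and I would phrase its proof as this translation plus the two verifications.

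The main obstacle is the weighted-covering step, and inside it the \emph{least}-period requirement rather than merely the existence of a periodic orbit of the right height. It is easy to arrange that a short column carries periodic behavior of period dividing its height; ruling out that the true least period is a proper divisor forces one, at every stage, to separate points according to their $f$-orbit combinatorics finely enough that a column of height $h$ cannot be telescoped down into an honest column of smaller height. The delicate point is that this has to be done uniformly across the minimal, aperiodic, and fully periodic behavior that may coexist in a general zero-dimensional homeomorphism — in particular where periodic orbits are not dense — and this uniform construction is, I expect, the heart of \cref{prop:main} and of \cref{sec:weighted-covering}.
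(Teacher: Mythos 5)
Your proposal takes essentially the same route as the paper: the proof of \cref{thm:Bratteli-Vershik-for-all} given there is exactly the composition you describe, namely apply \cref{thm:main} to obtain a $\bl$-periodicity-regulated weighted graph covering model of $(X,f)$ and then \cref{thm:main-link-wg-to-BV} to transfer it to a Bratteli--Vershik model with the same regulation property. One minor caveat in your sketch of the first step: the paper's construction (via \cref{prop:main}) only requires each short tower to \emph{contain} a periodic point whose least period equals the tower height, not that $f^{h}$ restrict to the identity on the whole base, which would be a stronger and unnecessary demand.
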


Let us explain this theorem.
In a Bratteli--Vershik model,
it is well known that each $V_n$ decomposes $X$ by finite towers
 that are linked to each vertex in $V_n$.
In this regard, the $\bl$-periodicity-regulated condition implies that,
 if the tower corresponding to $v \in V_n$ has height $l(v) \le l_n$,
 then there must exist a periodic orbit with a least period of $l(v)$.
If $(X,f)$ is positively transitive,
 then for an arbitrary sequence of small $\delta_n > 0$ ($n \bpi$),
 we may assume that
 the decomposition by $V_n$ includes
 a tower that is $\delta_n$-dense (see \cref{rem:delta-dense}).
In addition,
 suppose that there exists a closed and open set $U \subseteq X$ and a positive
 integer $n$ such that $\bigcup_{i = 0}^nf^i(U) = X$.
Then, \cref{rem:basic-set-is-contained} indicates that
 a basic set can be taken as a subset of $U$.

In \cite{DOWNAROWICZ_2008FiniteRankBratteliVershikDiagAreExpansive},
 Downarowicz and Maass defined the topological rank
 $K$ for every invertible Cantor minimal system.
Following the work of Bezuglyi, Kwiatkowski, and Medynets
 \cite{BEZUGLYI_2009AperioSubstSysBraDiag},
 our main result offers the possibility of defining the topological rank
 for every
 compact zero-dimensional homeomorphic system.

Bezuglyi, Kwiatkowski, and Yassawi \cite{BezuglyiKwiatkowskiYassawi_2014PerfectOrderingsOnFiniteRankBratteliDiagrams}
 considered cases in which, for finite-rank 
 Bratteli diagrams, the order
 of the diagram admits a homeomorphic Vershik map.
They refer to such orderings as \textit{perfect}.
One of their aims was to find a necessary and sufficient condition for an
 order to be perfect.
Further, Bezuglyi and Yassawi
 \cite{BezuglyiYassawi2017OrdersThatYieldHomeoOnBratteliDiagrams}
 considered this for infinite-rank Bratteli diagrams.
In these studies, it was essential to use sufficient telescopings.
The discussion of the continuity of Vershik maps has 
 been clarified by this research.
If we restrict our aim to stationary systems, then the argument becomes
 extremely simple.
Based on flexible graph coverings,
 we considered the link with some class of substitution subshifts.
In \cref{subsec:examples},
 we present a few examples of how stationary graph covering models
 can express some portion of substitution dynamical systems.
In \cref{subsec:transitive-substitution},
 we present a method of constructing some class of
 transitive substitution subshifts.
Through these examples and arguments, we intend to
 present concrete arguments that use weighted (flexible) graph covering models.
\section{Preliminaries}\label{sec:basic-covering}
Let $\Z$ be the set of all integers, $\Nonne$ be the set of all non-negative
 integers, and $\Posint$ be the set of all positive integers.
For integers $a < b$, the intervals are denoted by
 $[a,b] := \seb a, a+1, \dotsc,b \sen$, and so on.
Let $(X,f)$ be an invertible zero-dimensional system, i.e.
 $f$ is a homeomorphism.
Let $h$ be a positive integer
 and $U \subseteq X$ be a closed and open set.
If all $f^i(U)$ $(0 \le i < h)$ are mutually disjoint, then $\xi := \seb f^i(U) \mid 0 \le i < h \sen$ is called a \textit{tower}
 with \textit{base} $U$ and \textit{height} $h$.
In this case, we write $\barU := \bigcup \xi$, and say that $\barU = \bigcup_{0 \le i < h}f^i(U)$ is a \textit{tower}
 of height $h$ and  base $U$.
The notion of the tower plays a central role in our argument.
We have described the basic graph covering models for
 all zero-dimensional continuous surjections under the naming of
 the sequences of covers
 (see \cite[\S 3]{Shimomura_2014SpecialHomeoApproxCantorSys}).
In \cite{Shimomura_2014ergodic},
 we used the term `graph covering', or just `covering',
 whereas in this paper, with the notion of the tower, we develop the idea
 of a finite directed graph with multiple edges,
 and assign a positive integer, referred to as the `length', to each edge.
These two types of graph coverings are termed differently
 to avoid ambiguity
 (see \cref{nota:basic-graph-covering,defn:weighted-graph-covering}.)

In this section,
 we recall the process of constructing basic graph covering models
 for general zero-dimensional systems used in
 \cite{
 Shimomura_2014SpecialHomeoApproxCantorSys,
 Shimomura_2014ergodic}.
In \cref{sec:extended-graph-covering},
 we develop the new notion of \textit{weighted graph covering models}
 in which each edge is assigned a weight corresponding to its length.
Further, in \cref{sec:extended-graph-covering},
 without explicitly assigning the edge lengths, we define
 \textit{flexible graph covering models}.
\begin{nota}\label{nota:basic-graph-covering}
To avoid ambiguity among the three types of graph covering models,
 the original graph coverings
 are referred to here as \textit{basic graph covering models}.
Hereafter, 
 we do not use the ambiguous simple term `graph covering'.
Instead, 
 we deliberately use the three terms
 `basic (graph) covering model',
 `weighted (graph) covering model', and
 `flexible (graph) covering model'; the word `graph' may be omitted.
\end{nota}

A pair $\baiG = (\baiV,\baiE)$
 of a finite set $\baiV$
 and a relation $\baiE \subseteq \baiV \times \baiV$ on $\baiV$
 can be considered as a directed graph with vertices $\baiV$
 and an edge from $\baiu$ to $\baiv$ when $(\baiu,\baiv) \in \baiE$.
In this sense, we refer to the finite directed graph
 $\baiG = (\baiV,\baiE)$ as a \textit{basic graph}.
We write $\baiV = V(\baiG)$ and $\baiE = E(\baiG)$.
Hereafter, inverted hats are used to denote a basic graph $\baiG$, its vertex
 set $\baiV$, and edge set $\baiE$.  
Inverted hats are also used to denote a vertex $\baiv \in \baiV$ and an edge $\baie \in \baiE$. 

\begin{nota}
In this paper, we assume that a basic graph $\baiG = (\baiV,\baiE)$
 is a surjective relation,
 i.e. for every vertex $\baiv \in \baiV$,
 there exist edges $(\baiu_1,\baiv),(\baiv,\baiu_2) \in \baiE$.
\end{nota}

For basic graphs $\baiG_i = (\baiV_i,\baiE_i)$ with $i = 1,2$,
 a map $\baifai : \baiV_1 \to \baiV_2$ is said to be
 a \textit{basic graph homomorphism}
 if, for every edge $(\baiu,\baiv) \in \baiE_1$,
 it follows that $(\baifai(\baiu),\baifai(\baiv)) \in \baiE_2$.
In this case, we write $\baifai : \baiG_1 \to \baiG_2$.
For a basic graph homomorphism $\baifai : \baiG_1 \to \baiG_2$,
 we say that $\baifai$ is \textit{edge-surjective}
 if $\baifai(\baiE_1) = \baiE_2$.
Suppose that a basic graph homomorphism
 $\baifai : \baiG_1 \to \baiG_2$ satisfies the following condition:
\[(\baiu,\baiv),(\baiu,\baiv') \in \baiE_1 \text{ implies that }
 \baifai(\baiv) = \baifai(\baiv').\]
In this case, $\baifai$ is said to be \textit{\pdirectional}.
Suppose that a basic graph homomorphism
 $\baifai : \baiG_1 \to \baiG_2$ satisfies the following condition:
\[(\baiu,\baiv),(\baiu',\baiv) \in \baiE_1 \text{ implies that }
 \baifai(\baiu) = \baifai(\baiu').\]
In this case, $\baifai$ is said to be \textit{\mdirectional}.
A basic graph homomorphism is
 \textit{\bidirectional} if it satisfies both of the above conditions.

\begin{defn}\label{defn:cover}
For basic graphs $\baiG_1$ and $\baiG_2$,
 a basic graph homomorphism $\baifai : \baiG_1 \to \baiG_2$
 is called a \textit{basic (graph) cover} if it is a \pdirectionals \tesgh.
\end{defn}

\begin{defn}\label{defn:basic-covering-model}
We call a sequence of basic covers
 $\baiGcal : \covrepa{\baiG}{\baifai}$
  a \textit{basic graph covering model},
 or just a \textit{basic covering model},
 if $\baiG_0 := \left(\seb \baiv_0 \sen, \seb (\baiv_0,\baiv_0) \sen \right)$
 is a singleton graph.
\end{defn}
In this paper, considering the numbering of Bratteli diagrams,
 we use this numbering for basic covering models.
In the original paper,
 we used the numbering $\baiG_n \getsby{\baifai_n} \baiG_{n+1}$.
Let us write basic graphs as $\baiG_i = (\baiV_i,\baiE_i)$ for $i \benonne$.
We define the \textit{inverse limit} of $\baiGcal$ as follows:
\[V_{\baiGcal} := \seb (\baiv_0,\baiv_1,\baiv_2,\dotsc)
 \in \prod_{i = 0}^{\infty}\baiV_i~|~\baiv_i = \baifai_{i+1}(\baiv_{i+1})
 \text{ for all } i \benonne \sen \text{ and}\]
\[E_{\baiGcal} := 
\seb (x,y) \in V_{\baiGcal} \times V_{\baiGcal}~|~
(\baiu_i,\baiv_i) \in \baiE_i \text{ for all } i \benonne\sen,\]
where $x = (\baiu_0 = \baiv_0,\baiu_1,\baiu_2,\dotsc),
 y = (\baiv_0,\baiv_1,\baiv_2,\dotsc) \in V_{\baiGcal}$.
The sets $\baiV_i$ $(i \bni)$ are equipped with the discrete topology, and the
 set $\prod_{i = 0}^{\infty}\baiV_i$ is equipped with the product topology.
\begin{nota}\label{nota:open-sets-of-vertices}
Let $X = V_{\baiGcal}$,
 and let us define a map $f : X \to X$ as $f(x) = y$
 if and only if $(x,y) \in E_{\baiGcal}$.
For each $n \bni$, the projection
 from $X$ to $\baiV_n$ is denoted by $\baifai_{\infty,n}$. 
For $\baiv \in \baiV_n$,
 we define a closed and open set $U(\baiv) := \baifai_{\infty,n}^{-1}(\baiv)$.
For a subset $A \subset \baiV_n$,
 we define a closed and open set $U(A) := \bigcup_{\baiv \in A}U(\baiv)$.
\end{nota}

\begin{nota}
Let $\baiGcal$ be a basic covering model $\covrepa{\baiG}{\baifai}$.
Let $X = V_{\baiGcal}$, and let us define $f : X \to X$ as stated above.
Then, by \cite[Theorem 3.9]{Shimomura_2014SpecialHomeoApproxCantorSys},
 $(X,f)$ is a zero-dimensional system.
This zero-dimensional system $(X,f)$ is written as $\invlim \baiGcal$.
By \cite[Lemma 3.5]{Shimomura_2014SpecialHomeoApproxCantorSys},
 if all $\baifai_n$ $(n > 0)$ are \bidirectional, then
 $f$ is a homeomorphism.
\end{nota}

\begin{defn}
Let $\baiGcal : \covrepa{\baiG}{\baifai}$ be a basic covering model.
A zero-dimensional system $(Y,g)$ \textit{admits a basic covering model}
 $\baiGcal$ if $(Y,g)$ is topologically conjugate to $\invlim{\baiGcal}$.
A zero-dimensional system $(Y,g)$
 \textit{admits a \bidirectionals basic covering model $\baiGcal$} if,
 in addition, all $\baifai_n$ $(n > 0)$ are \bidirectional.
\end{defn}

The following holds:
\begin{thm}[Theorem 3.9 and Lemma 3.5 of \cite{Shimomura_2014SpecialHomeoApproxCantorSys}]\label{thm:0dim=covering}
Every zero-dimensional system $(X,f)$ admits a basic covering model.
Every zero-dimensional system $(X,f)$ is invertible if and only if
 it admits a \bidirectionals basic covering model.
\end{thm}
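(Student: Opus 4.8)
The plan is to build the basic covering by hand from a refining sequence of finite clopen partitions of $X$, arranging each refinement so that the canonical ``containing-atom'' map is a basic cover (in the sense of \cref{defn:cover}), and then to exhibit the tautological conjugacy onto the inverse limit.

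For a finite clopen partition $\Pcal$ of $X$, associate the basic graph $\baiG(\Pcal)$ whose vertex set is $\Pcal$ itself and whose edges are the pairs $(P,Q)$ with $f(P) \cap Q \neq \kuu$. Because $f$ is a surjection, every atom has both an outgoing edge ($f(P) \neq \kuu$ meets some atom) and an incoming edge ($P \subseteq f(X)$), so $\baiG(\Pcal)$ is a surjective relation. If $\Pcal' \refine \Pcal$ (i.e.\ $\Pcal'$ refines $\Pcal$), the map $\baifai$ sending a $\Pcal'$-atom to the $\Pcal$-atom containing it is a basic graph homomorphism, and it is automatically edge-surjective: given $f(P)\cap Q \neq \kuu$ witnessed by a point $x$, the $\Pcal'$-atoms of $x$ and of $f(x)$ furnish an edge of $\baiG(\Pcal')$ lying over $(P,Q)$. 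To force \pdirectionalitys I would insist that $\Pcal'$ also refine the clopen partition $f^{-1}(\Pcal) = \seb f^{-1}(P) \mid P \in \Pcal \sen$; then each $f(P')$ lies in a single $\Pcal$-atom, so edges $(P',Q'),(P',Q'')$ force $\baifai(Q') = \baifai(Q'')$. Thus I take $\Pcal_0 = \seb X \sen$ (yielding the singleton head $\baiG_0$) and, starting from any generating sequence $\Qcal_n$ of clopen partitions with $\mesh(\Qcal_n) \to 0$, set $\Pcal_{n+1} := \Pcal_n \vee f^{-1}(\Pcal_n) \vee \Qcal_{n+1}$. Each $\baifai_{n+1} : \baiG(\Pcal_{n+1}) \to \baiG(\Pcal_n)$ is then a basic cover, and $\baiGcal : \baiG_0 \getsby{\baifai_1} \baiG_1 \getsby{\baifai_2} \dotsb$ is a basic covering.

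Next I would show $(X,f)$ is conjugate to $\invlim \baiGcal$. Let $\Phi(x) := (P_n(x))_{n \benonne}$, where $P_n(x)$ is the $\Pcal_n$-atom containing $x$; the nesting of the partitions makes this a point of $V_{\baiGcal}$. Since $\mesh(\Pcal_n) \to 0$ (as $\Pcal_n \refine \Qcal_n$), distinct points are eventually separated, so $\Phi$ is injective; given a compatible sequence of atoms, the nested intersection $\bigcap_n P_n$ is a single point by compactness and vanishing mesh, so $\Phi$ is onto; continuity is clear, and a continuous bijection between compact metric spaces is a homeomorphism. Finally $\Phi$ intertwines the dynamics: for every $n$ the point $f(x)$ lies in $f(P_n(x)) \cap P_n(f(x))$, so $(P_n(x), P_n(f(x))) \in \baiE_n$, whence $(\Phi(x), \Phi(f(x))) \in E_{\baiGcal}$; as the successor in $\invlim \baiGcal$ is single-valued (the content of the cited \cite[Theorem 3.9]{Shimomura_2014SpecialHomeoApproxCantorSys}, guaranteed by \pdirectionalitys), this says the inverse-limit map sends $\Phi(x)$ to $\Phi(f(x))$. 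This proves the first assertion. For the homeomorphic equivalence, the ``if'' direction is immediate: a \bidirectionals model has inverse-limit map a homeomorphism by \cite[Lemma 3.5]{Shimomura_2014SpecialHomeoApproxCantorSys}, so $(X,f)$, being conjugate to it, is homeomorphic. For ``only if'', when $f$ is a homeomorphism I would additionally require $\Pcal_{n+1}$ to refine the clopen partition $f(\Pcal_n)$. Then each $\Pcal_{n+1}$-atom $Q'$ lies in a single $f(\Pcal_n)$-atom, i.e.\ $f^{-1}(Q')$ lies in one $\Pcal_n$-atom $P$; any $P'$ with $(P',Q') \in \baiE_{n+1}$ meets $f^{-1}(Q') \subseteq P$ and, refining $\Pcal_n$, must satisfy $\baifai_{n+1}(P') = P$. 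Hence $\baifai_{n+1}$ is also \mdirectional, so the refined covering is \bidirectional.

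The routine parts are edge-surjectivity and the point-set verification that $\Phi$ is a conjugacy; the step that actually drives the construction is the partition bookkeeping, namely slicing each $\Pcal_{n+1}$ along $f^{-1}(\Pcal_n)$ (and, in the homeomorphic case, along $f(\Pcal_n)$) to secure \pdirectionalitys and \mdirectionality. The only genuinely delicate point is that the dynamics pass correctly to the limit, and this hinges precisely on the successor map being well defined --- exactly what \pdirectionalitys provides and what the cited Theorem~3.9 records.
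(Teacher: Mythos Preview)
Your proof is correct and follows precisely the approach the paper itself indicates: the paper does not give a self-contained proof of \cref{thm:0dim=covering} but cites \cite{Shimomura_2014SpecialHomeoApproxCantorSys}, and the Remark immediately following the theorem sketches exactly your construction --- a refining sequence of finite clopen partitions $\Ucal_n$ generating the topology, with each $U \in \Ucal_{n+1}$ satisfying $f(U) \subset U'$ for some $U' \in \Ucal_n$ (your refinement by $f^{-1}(\Pcal_n)$ secures this, which is the \pdirectionality), and in the homeomorphic case the additional refinement by $f(\Pcal_n)$ to obtain \mdirectionality. Your partition bookkeeping $\Pcal_{n+1} := \Pcal_n \vee f^{-1}(\Pcal_n) \vee \Qcal_{n+1}$ and the tautological conjugacy $\Phi$ are the standard implementation of that sketch.
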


\begin{rem}
Let $\baiGcal : \covrepa{\baiG}{\baifai}$ be a basic
 covering model and $\invlim \baiGcal = (X,f)$.
For each $n \bni$,
 the set $\Ucal(\baiG_n) := \seb U(\baiv) \mid \baiv \in V(\baiG_n) \sen$
 is a closed and open partition
 such that $U(\baiv) \cap f(U(\baiu)) \nekuu$
 if and only if $(\baiu,\baiv) \in E(\baiG_n)$.
Furthermore, $\bigcup_{n \bni} \Ucal_n$ generates the topology of $X$.
Conversely, suppose that $\Ucal_n$ $(n \bni)$
 is a sequence of finite closed and open partitions
 of a compact metrizable zero-dimensional space $X$,
 $\bigcup_{n \bni}\Ucal_n$ 
 generates the topology of $X$, and $f : X \to X$ is a
 continuous surjective map such that,
 for any $U \in \Ucal_{n+1}$, there exists 
 $U' \in \Ucal_n$ such that $f(U) \subset U'$.
Then, we can define a basic covering model in a natural way
 (see the discussion after
 \cite[Theorem 3.9]{Shimomura_2014SpecialHomeoApproxCantorSys}).
\end{rem}

\begin{nota}
Let $\baiG = (\baiV,\baiE)$ be a basic graph.
A sequence of vertices $w = (\baiv_0,\baiv_1,\dotsc,\baiv_l)$ of $G$
 is said to be a \textit{walk} of
 \textit{length} $l$ if $(\baiv_i, \baiv_{i+1}) \in \baiE$
 for all $0 \le i < l$.
We denote $l(w) := l$.
\end{nota}

\section{Graph coverings weighted by length}\label{sec:weighted-covering}
\label{sec:extended-graph-covering}

In this section, we introduce both weighted and flexible graph covering
 models.
We derive some regularity conditions on these models,
and state a condition that we think is closely related to the basic sets.
We also present our main theorem in terms of graph covering models.

\subsection{Introduction to the new graph covering models}\label{subsec:newmodels}
In this subsection, we introduce weighted (flexible) graph covering models
 to solve the main part of \cref{thm:Bratteli-Vershik-for-all}.
In this subsection,
 we introduce the weighted (flexible) graphs and their covers.
To distinguish them from basic graphs,
 weighted graphs are denoted as $\wgG = (\wgV,\wgE)$,
 with overlines for sets of vertices, edges, and the graphs themselves.
To distinguish flexible graphs from weighted graphs,
 the latter are denoted as $\fgG = (\fgV,\fgE)$,
 with large tildes for sets of vertices, edges, and the graphs themselves.
In \cref{nota:basic-graph},
 we construct a basic graph $\bai{G} = (\bai{V},\bai{E})$ from
 a weighted graph $\wgG = (\wgV,\wgE)$.

\begin{defn}\label{defn:weighted-graph}
Let $\wgG = (\wgV,\wgE)$ be a pair of finite sets such that
\itemb
\item there exists a source map $s : \wgE \to \wgV$
 and a range map $r : \wgE \to \wgV$ and
\item each vertex $v \in \wgV$ has edges
 $e_1,e_2 \in \wgE$
 such that
 $s(e_1) = r(e_2) = v$.
\itemn
In contrast to the case of basic graphs, the elements of $\wgV$ and $\wgE$
 may not have overlines if there is no confusion.
Let $l : \wgE \to \Posint$ be a map.
A pair $(\wgG,l)$ is called a \textit{weighted graph},
 and has the vertex set $\wgV$, edge set $\wgE$,
 and weight map $l : \wgE \to \Posint$.
Multiple directed edges are permitted between each pair of vertices.
In general, we omit $l$ and say that $\wgG$ is a weighted graph.
For each $e \in \wgE$, $l(e)$ is called the \textit{length of} $e$.
\end{defn}

\begin{defn}\label{defn:flexible-graph}
Let $\fgG = (\fgV,\fgE)$ be a pair of finite sets such that 
\itemb
\item there exists a source map $s : \fgE \to \fgV$
 and a range map $r : \fgE \to \fgV$ and
\item each vertex $v \in \fgV$ has edges
 $e_1,e_2 \in \fgE$
 such that
 $s(e_1) = r(e_2) = v$.
\itemn
As for weighted graphs, the elements of $\fgV$ and $\fgE$
 may not be denoted by large tildes if there is no confusion.
A finite directed graph $\fgG$ is called a \textit{flexible graph},
 and has the vertex set $\fgV$ and the edge set $\fgE$.
Multiple directed edges are permitted between each pair of vertices.
\end{defn}


A sequence $w = e_1 e_2 \dotsb e_k$ $(e_i \in \wgE, i = 1,2,\dotsc,k)$
 is a \textit{walk} if $r(e_i) = s(e_{i+1})$ for all $1 \le i < k$.
We denote $\wgE(w) := \seb e_i \mid 1 \le i \le k \sen$
 and $\wgV(w) := \seb s(e_i) \mid 1 \le i \le k \sen \cup \seb r(e_k) \sen$.
Further, we define the \textit{range map} $r(w) := r(e_k)$
 and the \textit{source map} $s(w) := s(e_1)$.
A walk $w$ is a \textit{cycle} if $s(w) = r(w)$.
A cycle $w = e_1 e_2 \dotsb e_k$ is a \textit{circuit} if
 all $s(e_i)$ $(1 \le i \le k)$ are mutually distinct.
For a flexible graph, a \textit{walk} $w$, $\fgE(w)$, $\fgV(w)$,
 \textit{range map}, \textit{source map}, \textit{cycle},
 and \textit{circuit} are also defined.
For a walk $w = e_1\ e_2\ \dotsb\ e_k$ of a weighted graph,
 the length is defined as $l(w) := \sum_{1 \le i \le k}l(e_i)$.
For a walk $w = e_1\ e_2\ \dotsb\ e_k$ in both $\wgG$ and $\fgG$,
 we write $w(\min) = e_1$ and $w(\max) = e_k$.
For a weighted graph $\wgG = (\wgV,\wgE)$,
 the set of finite walks is denoted by $\wgW(\wgG)$.
For a flexible graph $\fgG = (\fgV,\fgE)$,
 the set of finite walks is denoted by $\fgW(\fgG)$.

Let $\wgG_1 = (\wgV_1,\wgE_1)$
 and $\wgG_2 = (\wgV_2,\wgE_2)$ be weighted graphs,
and let $\wgW_i = \wgW(\wgG_i)$ $(i = 1,2)$.
A \textit{weighted graph homomorphism}
 $\fai : \wgG_1 \to \wgG_2$ is a pair of maps
 $\fai_V : \wgV_1 \to \wgV_2$
 and $\fai_E : \wgE_1 \to \wgW_2$ such that
\itemb
\item $\fai_V(s(e)) = s(\fai_E(e))$ for all $e \in \wgE_1$,
\item $\fai_V(r(e)) = r(\fai_E(e))$ for all $e \in \wgE_1$, and
\item $l(\fai_E(e)) = l(e)$ for all $e \in \wgE_1$.
\itemn
Note that we have \textbf{not} assumed the condition
 $\fai_V(\wgV_1) = \wgV_2$.
We extend
 $\fai_E(e_1 e_2 \dotsb e_k) = \fai_E(e_1) \fai_E(e_2) \dotsb \fai_E(e_k)$
 for each finite walk $e_1 e_2 \dotsc e_k  \in \wgW_1$.

For flexible graphs $\fgG_1 = (\fgV_1,\fgE_1)$
 and $\fgG_2 = (\fgV_2,\fgE_2)$,
a \textit{flexible graph homomorphism} $\fai$ satisfies
\itemb
\item $\fai_V(s(e)) = s(\fai_E(e))$ for all $e \in \fgE_1$ and
\item $\fai_V(r(e)) = r(\fai_E(e))$ for all $e \in \fgE_1$.
\itemn

\begin{nota}
For both weighted graph homomorphisms and flexible graph homomorphisms,
 we abbreviate $\fai(w) := \fai_E(w)$ for a walk $w$
 and $\fai(v) := \fai_V(v)$ for a vertex $v$.
\end{nota}

A weighted graph homomorphism $\fai$ is \textit{edge-surjective}
 if $\bigcup_{e \in \wgE_1}\wgE(\fai(e)) = \wgE_2$.
A flexible graph homomorphism $\fai$ is \textit{edge-surjective}
 if $\bigcup_{e \in \fgE_1}\fgE(\fai(e)) = \fgE_2$.

\begin{defn}\label{defn:weighted-and-flexible-graph-cover}
A weighted graph homomorphism $\fai$ is \textit{\pdirectional} if, 
 for every $e,e' \in \wgE_1$ with $s(e) = s(e')$,
 the walks $w = \fai(e)$ and $w' = \fai(e')$ satisfy
 $w(\min) = w'(\min)$.
A weighted graph homomorphism $\fai$ is \textit{\mdirectional} if, 
 for every $e,e' \in \wgE_1$ with $r(e) = r(e')$,
 the walks $w = \fai(e)$ and $w' = \fai(e')$ satisfy
 $w(\max) = w'(\max)$.
A weighted graph homomorphism is \textit{\bidirectional}
 if it satisfies both of the above conditions.
A weighted graph homomorphism $\fai : \wgG_1 \to \wgG_2$
 is called a \textit{weighted cover}
 if it is \pdirectionals and edge-surjective.
For flexible graph homomorphisms, \textit{\pdirectionality},
 \textit{\mdirectionality}, \textit{\bidirectionality},
 and \textit{flexible covers} are also defined.  
\end{defn}

\begin{defn}\label{defn:weighted-constant-cover}
Let $\fai : \wgG_1 \to \wgG_2$ be a weighted graph cover.
An edge $e \in \wgE_2$ is \textit{constantly covered by} an edge $e' \in \wgE_2$
 if $\fai(e') = e$.
Note that, in this case, we have $l(e') = l(e)$.
\end{defn}

\begin{defn}\label{defn:flexible-constant-cover}
Let $\fai : \fgG_1 \to \fgG_2$ be a flexible graph cover.
An edge $e \in \fgE_2$ is \textit{constantly covered by}
 an edge $e' \in \fgE_2$ if $\fai(e') = e$.
\end{defn}

\begin{defn}\label{defn:weighted-graph-covering}
A sequence of weighted covers
 $\wgGcal : \covrepa{\wgG}{\fai}$
 is said to be a \textit{weighted graph covering model}
 or \textit{weighted covering model}.
We assume that $\wgG_0$ is a singleton graph
 $(\seb v_0 \sen, \seb e_0 \sen)$ with $l(e_0) = 1$ and $s(e_0) = r(e_0) =v_0$.
For $m > n \ge 0$,
 the composition map
 $\fai_{m,n} := \fai_{n+1} \circ \fai_{n+2} \circ \dotsb \circ \fai_m$
 is naturally well defined.
Note that $\wgGcal$ may not be \bidirectional.
\end{defn}

\begin{nota}\label{nota:basic-graph}
We now construct a basic graph from 
 a weighted graph $\wgG = (\wgV,\wgE)$.
For each $e \in \wgE$,
 we form the set $\baiV(e)$ of vertices
 $\baiV(e) :=
 \seb \baiv_{e,0} = s(e),
 \baiv_{e,1}, \baiv_{e,2}, \dotsc, \baiv_{e,l(e)-1},
 \baiv_{e,l(e)} = r(e) \sen$.
Let $\baiV := \bigcup_{e \in \wgE}\baiV(e)$.
For each $e \in \wgE$,
 we form the set of edges
 $\baiE(e) := \seb (\baiv_{e,i}, \baiv_{e,i+1}) \mid 0 \le i < l(e)\sen$.
Let $\baiE := \bigcup_{e \in \wgE} \baiE(e)$.
Thus, we obtain a basic graph $\baiG = (\baiV,\baiE)$.
From this construction,
 the set $\wgV$ is considered to be a subset of $\baiV$.
Note that, if a pair of vertices $(v,v') \in \wgV \times \wgV$
 has more than one edge of length $1$
  directed from $v$ to $v'$,
 then they are merged into a single edge
 $(v,v') \in \wgV \times \wgV \subseteq \baiV \times \baiV$.
\end{nota}

\begin{lem}\label{lem:weighted-cover-to-basic-cover}
For a weighted cover $\fai : \wgG_1 \to \wgG_2$,
 we obtain a basic cover
 $\baifai : \baiG_1 \to \baiG_2$ in a natural way.
If $\fai$ is \bidirectional, then $\baifai$ is \bidirectional.
\end{lem}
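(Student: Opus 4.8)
The plan is to unwind the construction in \cref{nota:basic-graph} on both sides and show that the natural candidate for $\baifai$ defined there inherits all the required properties. First I would describe the construction of $\baifai$ explicitly. Given a weighted cover $\fai = (\fai_V, \fai_E) : \wgG_1 \to \wgG_2$, each edge $e \in \wgE_1$ is mapped to a walk $\fai_E(e) = e'_1 e'_2 \dotsb e'_k$ in $\wgG_2$ with $l(\fai_E(e)) = \sum_j l(e'_j) = l(e)$. On the basic-graph side, $e$ has been subdivided into a path $\baiv_{e,0} \to \baiv_{e,1} \to \dotsb \to \baiv_{e,l(e)}$ of length $l(e)$, while the walk $\fai_E(e)$ in $\baiG_2$ consists of the concatenation of the subdivided paths of $e'_1, \dotsc, e'_k$, which again has total length $l(e)$. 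So I would define $\baifai : \baiV_1 \to \baiV_2$ on the interior vertices of the subdivision of $e$ by sending $\baiv_{e,i}$ to the vertex at distance $i$ along this concatenated path in $\baiG_2$, for $0 \le i \le l(e)$; on the endpoints this agrees with $\fai_V$ applied to $s(e)$ and $r(e)$. The first point needing care: this must be well-defined on $\baiV_1$, since a vertex $v \in \wgV_1 \subseteq \baiV_1$ may be an endpoint shared by several edges, and also because edges of length $1$ between the same pair of vertices got merged; both are handled because the definition on endpoints factors through $\fai_V$, which is already a map, and \pdirectionality\ (see below) makes the merging harmless.

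Next I would verify $\baifai$ is a basic graph homomorphism: every edge $(\baiv_{e,i}, \baiv_{e,i+1}) \in \baiE_1$ is sent to a consecutive pair along the concatenated path $\fai_E(e)$ in $\baiG_2$, which by construction is an edge of $\baiE_2$. Then edge-surjectivity: since $\fai$ is edge-surjective as a weighted cover, $\bigcup_{e \in \wgE_1} \wgE(\fai_E(e)) = \wgE_2$, so every $e' \in \wgE_2$ appears in some $\fai_E(e)$; the subdivision edges $E(e') \subseteq \baiE_2$ are then exactly the images of the corresponding stretch of subdivision edges of $e$, hence $\baifai(\baiE_1) = \baiE_2$. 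Then \pdirectionality: I would take $(\baiu, \baiv), (\baiu, \baiv') \in \baiE_1$ and show $\baifai(\baiv) = \baifai(\baiv')$. If $\baiu$ is an interior vertex of some subdivided edge $e$, then $\baiv = \baiv'$ is forced (interior vertices have a unique outgoing edge), so there is nothing to prove. The only real case is $\baiu = v \in \wgV_1$, a genuine vertex; then the outgoing basic edges from $v$ correspond to the first subdivision steps of the weighted edges $e$ with $s(e) = v$ (identifying merged length-$1$ edges). For two such edges $e, e''$ with $s(e) = s(e'') = v$, \pdirectionality\ of the weighted cover $\fai$ gives $\fai_E(e)(\text{first}) = \fai_E(e'')(\text{first})$, i.e.\ the two concatenated paths in $\baiG_2$ begin with the same edge; hence their first vertices after $v$ coincide, which is exactly $\baifai(\baiv) = \baifai(\baiv')$. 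This is the step I expect to be the crux — translating the weighted notion of \pdirectionality\ (``same first edge of the image walk'') into the basic-graph notion (``same image of the out-neighbour'').

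Finally, for the bidirectional case I would run the mirror argument at range vertices: if $\fai$ is \bidirectional\ it is in particular \mdirectional, so for $e, e''$ with $r(e) = r(e'')$ one has $\fai_E(e)(\text{last}) = \fai_E(e'')(\text{last})$, the concatenated paths in $\baiG_2$ end with the same edge, and thus the vertex just before the common endpoint agrees — giving \mdirectionality\ of $\baifai$. Combined with the \pdirectionality\ already shown, $\baifai$ is then \bidirectional. The remaining items (that $\baifai$ restricted to endpoints is compatible with composition $\baifai_{m,n}$, and that the singleton weighted graph $\wgG_0$ yields the singleton basic graph $\baiG_0$) are routine bookkeeping that I would mention only briefly.
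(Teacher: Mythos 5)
Your proposal is correct and follows essentially the same route as the paper's (much terser) proof: define $\baifai$ on $V(e)$ by following the length-preserving image walk $\fai_E(e)$, note the only nontrivial check of \pdirectionality\ is at the source vertices $s(e)$, where it follows from \pdirectionality\ of $\fai$, and handle the \bidirectionals case by the mirror argument at range vertices. Your write-up simply makes explicit the bookkeeping (well-definedness under the merging of length-$1$ edges, edge-surjectivity) that the paper leaves as obvious.
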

\begin{proof}
We have considered $\wgV_1 \subseteq \baiV_1$ and $\wgV_2 \subseteq \baiV_2$.
The map $\baifai|_{\wgV_1} = \fai_V : \wgV_1 \to \wgV_2$ is well defined, and
 for each $e \in \wgE_1$, the map $\baifai|_{\baiV(e)} : \baiV(e) \to \baiV_2$
 is defined uniquely and $s(e), r(e) \in \wgV_1$ are mapped compatibly
 with $\baifai|_{\wgV_1}$.
Because each $e \in \wgE_1$ is mapped to a walk in $\wgG_2$,
 we have a graph homomorphism $\baifai : \baiG_1 \to \baiG_2$.
Thus, we only need to check the \pdirectionalitys condition.
For each vertex in $\baiV(e) \setminus \seb s(e),r(e) \sen$,
 this condition is trivial.
We only need to check every $s(e)$ $(e \in \wgE_1)$.
By the \pdirectionalitys condition
 for $\fai : \wgG_1 \to \wgG_2$,
 this is obvious.
Finally, the last statement is also obvious.
\end{proof}

\begin{nota}\label{nota:from-weighted-to-basic-covering}
In \cref{nota:basic-graph}, we transformed each $\wgG_n = (\wgV_n,\wgE_n)$
 into $\baiG_n = (\baiV_n,\baiE_n)$.
Further, in \cref{lem:weighted-cover-to-basic-cover}, each $\fai_n$
 is transformed into a basic cover $\baifai_n$.
Thus, from a weighted graph covering model $\wgGcal : \covrepa{\wgG}{\fai}$,
 we obtain
 a basic graph covering model $\baiGcal : \covrepa{\baiG}{\baifai}$.
\end{nota}

\begin{defn}\label{defn:flexible-graph-covering}
A sequence of flexible covers
 $\fgGcal : \covrepa{\fgG}{\fai}$
 is said to be a \textit{flexible graph covering model}
 or \textit{flexible covering model}.
We assume that $\fgG_0$ is a singleton graph
 $(\seb v_0 \sen, \seb e_0 \sen)$.
For $m > n \ge 0$,
 the composition map
 $\fai_{m,n} := \fai_{n+1} \circ \fai_{n+2} \circ \dotsb \circ \fai_m$
 is naturally well defined.
Note that $\fgGcal$ may not be \bidirectional.
We also note that, in the sequence, multiple or even infinite occurrences of
 the same flexible graphs are permitted.
\end{defn}

\begin{rem}\label{rem:principle}
Let $\fgGcal : \covrepa{\fgG}{\fai}$ be a flexible graph covering model with
 the singleton graph $\fgG_0 = (\seb v_0 \sen, \seb e_0 \sen)$.
We consider $e_0$ to have length $l(e_0) = 1$.
Then, the lengths of all the edges of the graphs will be fixed
 by assuming that each $\fai_n$ $(n \bpi)$ preserves the lengths of the walks.
Thus, a flexible covering model defines a weighted covering model uniquely.
It is also clear that a weighted covering model
 defines a flexible covering model uniquely.
We use the symbols $\wgGcal$ and $\fgGcal$ to show the relation between these graph covering models.
Each $\wgG_n$ $(n \bni)$ corresponds to $\fgG_n$ $(n \bni)$.
\end{rem}

\begin{defn}
Let $\fgGcal : \covrepa{\fgG}{\fai}$ be a flexible graph covering model 
 and $\wgGcal : \covrepa{\wgG}{\fai}$
 be the corresponding weighted graph covering model.
Then, for each $n \bni$ and $\fge \in \fgE_n$,
 there exists a corresponding edge $\wge \in \wgE_n$ of length
 $l(\wge)$.
We write $l(n,\fge) := l(\wge)$.
In contrast to basic covering models,
 using the overlined symbols for elements of $\wgE$ is \textbf{not} a notation.
Similarly, using symbols with large tildes is \textbf{not} a notation in the following, i.e. we use $e \in \wgE$
 and $e \in \fgE$ if there is no confusion.  
\end{defn}

Fix a weighted covering model $\wgGcal : \covrepa{\wgG}{\fai}$
 or a corresponding flexible covering model $\fgGcal : \covrepa{\fgG}{\fai}$.
In \cref{nota:from-weighted-to-basic-covering},
 we obtained a basic covering model
 $\baiGcal : \covrepa{\baiG}{\baifai}$.
Thus, we have a zero-dimensional system $\invlim \baiGcal$.
The zero-dimensional system $\invlim \baiGcal$ is also
 denoted as $\invlim \wgGcal$ or $\invlim \fgGcal$.
Thus, these three inverse limits are the same; in other words,
 $\invlim \fgGcal$ is considered to be $\invlim \wgGcal$,
 and $\invlim \wgGcal$ is considered to be $\invlim \baiGcal$.

\begin{defn}\label{defn:weighted-graph-covering-model}
A zero-dimensional system $(Y,g)$ has
 a \textit{weighted graph covering model} $\wgGcal : \covrepa{\wgG}{\fai}$
 or a \textit{flexible graph covering model} $\fgGcal : \covrepa{\fgG}{\fai}$
 if $(Y,g)$ is topologically conjugate
 to $\invlim \baiGcal$.
\end{defn}

\begin{nota}
Hereafter, a definition that is stated for a weighted graph covering model
 is also applicable to a flexible graph covering model, and
 a property that is satisfied by a weighted graph covering model
 is also satisfied by a flexible graph covering model,
 if it is possible via the argument in \cref{rem:principle}.
\end{nota}

Let $\wgGcal : \covrepa{\wgG}{\fai}$ be a weighted covering model and
 $n(0) = 0 < n(1) < n(2) < \dotsb$ be a sequence of positive integers.
We obtain a weighted covering model $\wg{\Gcal'} : \covrepa{\wg{G'}}{\fai'}$ by
 letting $\wg{G'}_0 = \wgG_0$,
 ${\wg{G'}}_i = \wgG_{n(i)}$ for all $i \ge 1$, and
 $\fai'_{i}
 = \fai_{n(i-1)+1} \circ\ \cdots\ \circ \fai_{n(i) -1} \circ \fai_{n(i)}$
 for all $i \ge 1$.
This procedure is called \textit{telescoping}, based on
 the telescoping of the Bratteli diagrams.
Suppose that an invertible zero-dimensional system $(Y,g)$
 has a weighted graph covering model $\wgGcal$.
It is easy to show that, by telescoping $\wgGcal$,
 we obtain a \bidirectionals weighted graph covering model.

\subsection{Basic sets and regularity conditions on models}
Medynets \cite{Medynets_2006CantorAperSysBratDiag} 
 defined the notion of the basic set for the Bratteli--Vershik models
 of aperiodic zero-dimensional systems. 
We define the same for our case (see \cref{defn:basic-set}).
From a weighted covering model $\wgGcal : \covrepa{\wgG}{\fai}$,
 we can obtain a basic covering model $\baiGcal : \covrepa{\baiG}{\baifai}$.
We write $\invlim \baiGcal = (X,f)$.
In \cref{nota:open-sets-of-vertices},
 we defined a natural projection $\baifai_{\infty,n} : X \to \baiV_n$
 such that $\baifai_{n+1} \circ \baifai_{\infty,n+1} = \baifai_{\infty,n}$ for
 all $n \bni$.
For $m > n \ge 0$, if we define
 $\baifai_{m,n} :=
 \baifai_{n+1} \circ \baifai_{n+2} \circ \dotsb \circ \baifai_m$,
 then we have $\baifai_{m,n} \circ \baifai_{\infty,m} = \baifai_{\infty,n}$.
We have also defined $U(\baiv) = \baifai_{n,\infty}^{-1}(\baiv)$
 for each $\baiv \in \baiV_n$.
Because we consider that $\wgV_n \subseteq \baiV_n$, 
 for each $v \in \wgV_n$, we have $U(v)$ by the above equation.
Using the same notation, we define $U(\wgV_n)$ for all $n \bni$.
Thus, we can state the following.

\begin{nota}\label{nota:Vinfty}
Suppose that $\wgGcal$ is a weighted covering model
 with $\invlim \wgGcal = (X,f)$.
We denote $\wgV_{\infty} := \bigcap_{n \ge 0}U(\wgV_n)$.
The above argument can also be applied to $\fgGcal$ via the argument
 in \cref{rem:principle}, i.e. we define $\fgV_{\infty} := \wgV_{\infty}$.
\end{nota}

\begin{defn}\label{defn:basic-set}
Let $(X,f)$ be an invertible zero-dimensional system.
A closed set $A \in X$ is called a \textit{basic set}
 if, for every $x \in X$, the orbit of $x$ enters $A$ at most once,
 and for every $x \in X$ and open set $U \supset A$,
 the positive orbit of $x$ enters $U$ at least once.
\end{defn}

In general, an orbit of $(X,f)$ may enter the set $\wgV_{\infty}$ many times.
Therefore, we need to define a condition that prevents this.
We will show that under certain conditions, the set $\wgV_{\infty}$
 is a basic set (see
 \cref{defn:closing-weighted-covering,thm:closing-implies-basic-set-wg}).

\begin{nota}\label{nota:we-can-define-a-natural-base-map}
For $e \in \wgE_n$ $(n \bpi)$, we have constructed a set of vertices
 $\baiV(e) := \seb \baiv_{e,0} = s(e),
 \baiv_{e,1}, \baiv_{e,2}, \dotsc, \baiv_{e,l(e)-1},
 \baiv_{e,l(e)} = r(e) \sen$.
In the case of $l(e) \ge 2$, we define the base floor
 $B(e) := f^{-1}(U(\baiv_{e,1}))$ and the tower
 $\barB(e) := \bigcup_{0 \le i < l(e)}f^i(B(e))$ with height $l(e)$.
It follows that $f^i(B(e)) = U(\baiv_{e,i})$ for each $0 < i < l(e)$.
If $l(e) = 1$, many towers may start from
 $U(s(e))$ and many different towers with height $1$
 may also be included in
 $U(s(e))$, so we cannot use $B(e) = U(s(e))$.
To discard these additional parts from $U(s(e))$, we use the towers
 obtained by $\wgE_{n+1}$.
We set $v := s(e)$ and
 $A(e) := \seb e' \in \wgE_{n+1} \mid \wgE(\fai_{n+1}(e')) \ni e \sen$.
Further, we set
 $A_1(e) := \seb e' \in A(e) \mid l(e') = 1 \sen$
 and $A_2(e) := \seb e' \in A(e) \mid l(e') \ge 2 \sen$.
For each $e' \in A_2(e)$,
 we write $\fai_{n+1}(e') = e'_1 e'_2 \dotsb e'_{k(e')}$ and
 set $J(e,e') := \seb i \in [1,k(e')] \mid e'_i = e \sen$.
For each $1 < j \in J(e,e')$, we define
 $l(e,e',j) := \sum_{1 \le i < j}l(e'_i)$.
If $1 \in J(e,e')$, we define $l(e,e',1) := 0$.
Then, $B(e) :=
 \bigcup_{e' \in A_1(e)}U(s(e')) \cup
 \bigcup_{e' \in A_2(e),\ j \in J(e,e')}f^{l(e,e',j)}(B(s(e')))$,
 and we define a tower $\barB(e) := B(e)$ with height $1$.
Now, by the \pdirectionalitys condition,
 we obtain a decomposition $X = \bigcup_{e \in \wgE_n}\barB(e)$
 for each $n \bni$.
The last decomposition is also denoted for $\fgGcal$
 \viatheargument.
\end{nota}

The following definitions lead us to the first regularity condition
 on weighted (flexible) graph covering models.

\begin{defn}\label{defn:infinite-constant-covering}
Let $\wgGcal : \covrepa{\wgG}{\fai}$ be a weighted covering model and
 $n \bni$.
A sequence $e_m \in \wgE_m$ $(m \ge n)$
 is an \textit{infinite constant covering from} $n$
 if $\fai_{m,m'}(e_m) = e_{m'}$ for all $m > m' \ge n$.
Note that, in this case, $l(e_m) = l(e_n)$ for all $m \ge n$.
If such a sequence exists over $e = e_n \in \wgE_n$, then
 we say that $e$ is \textit{infinitely constantly covered}.
We define the same for a flexible covering model $\fgGcal$.
We also note that if $e_m$ is a circuit for some $m > n$, then
 for every $i \in [n,m]$, $e_i$ is a circuit.
\end{defn}

Suppose that some $e \in \wgE_n$ $(n \bni)$ is infinitely constantly covered by
 a sequence $e_m$ $(m \ge n)$.
If we write $\seb x \sen = \bigcap_{i \ge n}U(s(e_i))$,
 then it follows that
\[\bigcap_{i \ge n}\barU(e_i)
 = \seb x, f(x), f^2(x), \dotsc, f^{l(e)-1}(x)\sen.\]
In addition, if the ${e_m}$'s are circuits, then $\bigcap_{i \ge n}\barU(e_i)$
 is a periodic orbit of least period $l(e)$.

\begin{defn}\label{defn:closing-weighted-covering}
For a weighted covering model $\wgGcal : \covrepa{\wgG}{\fai}$ having
 the \textit{closing property}, if
 there exists an infinite constant covering $e_m \in \wgE_m$ $(m \ge n)$
 from $n$, then each $e_m$ is a circuit.  
We define the same for a flexible covering model $\fgGcal$.
\end{defn}

The next theorem demonstrates the use of the closing property.

\begin{thm}\label{thm:closing-implies-basic-set-wg}
Let $\wgGcal : \covrepa{\wgG}{\fai}$ be
 a \bidirectionals weighted covering model.
It follows that $\wgV_{\infty}$ is a basic set if and only if
 $\wgGcal$ has the closing property.
The same is true for $\fgGcal$ and $\fgV_{\infty}$ 
 \viatheargument.
\end{thm}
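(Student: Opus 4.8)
The plan is to exploit the tower structure of \cref{nota:we-can-define-a-natural-base-map}: for every $n$ and $e\in\wgE_n$ there is a tower $\barB(e)$ of height $l(e)$ with base $B(e)\subseteq U(s(e))$, and $X=\bigsqcup_{e\in\wgE_n}\barB(e)$. The first step is the reformulation $U(\wgV_n)=\bigsqcup_{e\in\wgE_n}B(e)$: the only vertices of $\baiG_n$ lying in $\wgV_n$ are the endpoints $s(e),r(e)$ of the edges of $\wgG_n$ (by \cref{nota:basic-graph} the interior subdivision vertices are never merged with vertices of $\wgV_n$), and for a non-loop $e$ the tower $\barB(e)$ misses $U(r(e))$. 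Hence $\wgV_\infty=\bigcap_n U(\wgV_n)$ is exactly the set of points lying in the base of a level-$n$ tower for every $n$; being a nested intersection of nonempty clopen sets it is nonempty and closed, and every open $U\supseteq\wgV_\infty$ contains some $U(\wgV_N)$. I would also use everywhere the refinement property of the towers: if $\fai_{n+1}(e')=e_1e_2\dotsb e_k$, then a point climbing $\barB(e')$ passes through $\barB(e_1),\dotsc,\barB(e_k)$ in this order, spending $l(e_j)$ steps in $\barB(e_j)$; equivalently $\baifai_{n+1}$ sends the $i$-th subdivision vertex of $e'$ to the $i$-th subdivision vertex of $e_1\dotsb e_k$. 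Pinning down this refinement statement, especially for the height-one base floors defined in \cref{nota:we-can-define-a-natural-base-map}, is the main technical obstacle; the rest is soft.

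For ``closing property $\Rightarrow$ $\wgV_\infty$ is a basic set'', closedness is clear, and the clause ``every positive orbit meets every open $U\supseteq\wgV_\infty$'' needs no hypothesis: choosing $N$ with $U(\wgV_N)\subseteq U$, any $x$ sits at some floor $j<l(e)$ of a level-$N$ tower $\barB(e)$, and $f^{\,l(e)-j}(x)$ lands in the base of the next level-$N$ tower, hence in $U$. The substance is the ``at most once'' clause, and since $f$ is a homeomorphism it is enough to show that $x,f^k(x)\in\wgV_\infty$ with $k\ge 1$ forces $f^k(x)=x$. By the refinement property, for each $n$ the segment $x,fx,\dotsc,f^k(x)$ traces a walk $w_n=e_n^{(0)}\dotsb e_n^{(m_n-1)}$ in $\wgG_n$ from the level-$n$ coordinate of $x$ to that of $f^k(x)$, of total length $k$, with $\fai_{n+1}(w_{n+1})=w_n$. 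As every edge has length $\ge 1$ we have $m_n\le k$, so some edge-count $M$ occurs for all $n$ in an infinite set $N'$. For $n<n'$ in $N'$, the identity $\fai_{n',n}(w_{n'})=w_n$ exhibits a walk with $M$ edges as a concatenation of $M$ nonempty walks $\fai_{n',n}(e_{n'}^{(i)})$, so each of them is a single edge equal to $e_n^{(i)}$; thus every $(e_n^{(i)})_{n\in N'}$ is an infinite constant covering of the telescoping of $\wgGcal$ along $N'$. Closing property descends to telescopings — a composite of edge-to-walk maps equal to a single edge is a single edge at every intermediate stage, so such a covering lifts to one of $\wgGcal$ — hence each $e_n^{(i)}$ is a circuit. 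Consecutive edges of $w_n$ share a vertex and are loops, so $w_n$ is a loop, whence the level-$n$ coordinates of $x$ and $f^k(x)$ agree for all $n\in N'$, hence for all $n$, i.e. $x=f^k(x)$.

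For the converse I would argue the contrapositive. If closing property fails there is an infinite constant covering $e_m\in\wgE_m$ $(m\ge n)$ with some term not a circuit; by the monotonicity noted in \cref{defn:infinite-constant-covering} we may enlarge $n$ so that every $e_m$ is a non-loop, and they share one length $\ell$. Since $\fai_{m+1}(e_{m+1})=e_m$ is a single edge, $\barB(e_{m+1})$ refines floor-by-floor into $\barB(e_m)$ (both of height $\ell$), so $F_i:=\bigcap_{m\ge n}(\text{floor }i\text{ of }\barB(e_m))$ is nonempty and closed for $0\le i<\ell$, with $f(F_i)\subseteq F_{i+1}$ for $i<\ell-1$ and $f(F_{\ell-1})\subseteq\bigcap_{m\ge n}U(r(e_m))=:G$; both $F_0$ and $G$ lie in $\wgV_\infty$. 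Picking $y\in F_0$ gives $y\in\wgV_\infty$ and $f^{\ell}(y)\in G\subseteq\wgV_\infty$, while $y$ has level-$n$ coordinate $s(e_n)$ and $f^{\ell}(y)$ has level-$n$ coordinate $r(e_n)$, which differ since $e_n$ is not a loop and distinct vertices of $\baiG_n$ have disjoint clopen sets $U(\cdot)$. So $y\ne f^{\ell}(y)$, the orbit of $y$ meets $\wgV_\infty$ twice, and $\wgV_\infty$ is not a basic set. Finally, the assertion for $\fgGcal$ and $\fgV_\infty$ follows via the argument in \cref{rem:principle}.
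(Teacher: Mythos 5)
Your proof is correct, and while the converse and the ``every neighborhood of $\wgV_{\infty}$ is met by every positive orbit'' clause essentially coincide with the paper's argument (the paper likewise takes a non-circuit infinite constant covering and produces the two points $y$ and $f^{\ell}(y)$ in $\wgV_{\infty}$; your floor-by-floor nested intersection is just a more careful version of its claim that $\bigcap_{m}U(e_m)$ picks out a point), your treatment of the ``at most once'' clause is genuinely different. The paper argues pointwise by a trichotomy -- fixed points, periodic points of least period $\ge 2$, non-periodic points -- following the unique level-$k$ tower $\barU(e_k)$ through $x$ and using that the heights $l(e_k)$ are non-decreasing: bounded heights yield an infinite constant covering, hence circuits and periodicity, while unbounded heights make $a(k)$ or $b(k)$ tend to infinity. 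You instead start from two alleged visits $x,f^k(x)\in\wgV_{\infty}$, decompose the orbit segment into complete edge traversals at every level (which is exactly the compatibility built into \cref{nota:we-can-define-a-natural-base-map}, so the ``refinement property'' you flag is available), bound the edge counts by $k$, stabilize the count along a cofinal set, and conclude that every edge of the resulting walk is infinitely constantly covered, hence by closing property a circuit, so the two visits coincide. Your lifting of the constant covering from the cofinal subsequence back to a genuine infinite constant covering of $\wgGcal$ is the right way to invoke closing property and replaces any appeal to invariance under telescoping. What each approach buys: the paper's case analysis exposes the dichotomy (small periodic orbits get trapped in constant coverings, non-periodic points have tower heights tending to infinity) that motivates the later periodicity-regulation condition, whereas your argument treats all points uniformly, avoids the monotonicity bookkeeping, and makes explicit the reduction that ``enters $\wgV_{\infty}$ at most once'' means any two orbit points in $\wgV_{\infty}$ must be equal.
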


\begin{proof}
Let $\invlim \wgGcal = (X,f)$.
By \cref{nota:we-can-define-a-natural-base-map}, we have a decomposition
 $X = \bigcup_{e \in \wgE_n}\barU(e)$ for each $n \bni$.
Suppose that, for every infinite constant covering $e_m$ $(m \ge n)$
 (for some $e_n \in \wgE_n$ and $n \bni$),
 every $e_m$ $(m \ge n)$ is a circuit.
We must show that $\wgV_{\infty}$ is a basic set.
Let $x \in X$
be a fixed point.
Evidently, there then exists a sequence $v_n \in \wgV_n$ $(n \bni)$
 such that $x \in U(v_n)$ for all $n \bni$.
Consequently, we have that $x \in \wgV_{\infty}$.
Next, suppose that $x$ is a periodic point with least period $l \ge 2$.
For every sufficiently large $n$, 
 there exists a unique edge $e_n \in \wgE_n$ with $l(e_n) \le l$ such that 
 $\barU(e_n) \ni x$.
Further, for every sufficiently large $n$, we have $\fai_{n+1}(e_{n+1}) = e_n$.
Thus, for sufficiently large $n$,
 the sequence $e_m$ ($m \ge n$) is
 an infinite constant cover.
By the assumption,
 $e_m$ ($m \ge n$) are circuits.
Because the diameters of $U(s(e_m))$ ($m \ge n$) tend to $0$,
 it follows that $l(e_m) = l$ ($m \ge n$).
Thus, for all $m \ge n$, the periodic orbit of $x$ enters $U(s(e_m))$ 
 exactly once.
Finally, suppose that $x$ is not periodic.
For each $k \bni$,
 take a unique $e_k \in \wgE_k$ such that $x \in \barU(e_k)$.
It is evident that the sequence $l(e_k)$ ($k \bni$) is not decreasing.
Suppose that $\lim_{k \to \infty} l(e_k) < \infty$.
Then, there exists an $n \bpi$ such that the sequence
 $e_m$ ($m \ge n$) is an infinite constant covering.
By the assumption,
 $e_m$ ($m \ge n$) are circuits, and
 $x$ becomes a periodic point, which is a contradiction.
Thus, we have that $\lim_{k \to \infty} l(e_k) = \infty$.
We have defined $\baiV(e_k) :=
 \seb \baiv_{e_k,0} = s(e_k),
 \baiv_{e_k,1}, \baiv_{e_k,2}, \dotsc, \baiv_{e_k,l(e_k)-1},
 \baiv_{e_k,l(e_k)} = r(e_k) \sen$.
There exists some $j(k)$ such that $0 \le j(k) < l(e_k)$ with
 $x \in U(\baiv_{e_k,j(k)})$.
Let $a(k) := j(k)$ and $b(k) := l(e_k) - j(k)$ for each $k$.
Then, both $a(k)$ and $b(k)$ with $k \bpi$ are non-decreasing,
 and $\lim_{k \to \infty}a(k)$ and/or $\lim_{k \to \infty}b(k)$
 are $\infty$.
Thus, the orbit of $x$ enters $\wgV_{\infty}$ at most once.
Next, suppose that $U \supseteq \wgV_{\infty}$ is an open set.
Because each $U(\wgV_i)$ $(i \ge 0)$ is also compact,
 for sufficiently large $m > 0$, it follows that $U(\wgV_m) \subseteq U$.
Let $l_m := \max \seb l(e) \mid e \in \wgE_m \sen$.
Then, for each $x \in X$,
 it follows that $f^i(x) \in U(\wgV_m) \subseteq U$ for some $0 \le i < l_m$.
This proves that $\wgV_{\infty}$ is a basic set.

To show the converse, suppose that $\wgV_{\infty}$ is a basic set.
The proof is established by contradiction.
Let $e_m \in \wgE_m$ ($m \ge n$)
 be an infinite constant covering with $l = l(e_m)$
 for all $m \ge n$.
Suppose that there exists some $N > n$ such that $e_N$ is not a circuit.
Then, for every $m > N$, it follows that $e_{m}$ is not a circuit.
We denote $\seb y \sen = \bigcap_{m \ge N}U(s(e_m))$.
It follows that $y, f^l(y) \in \wgV_{\infty}$ and $y \ne f^l(y)$,
 which is a contradiction.
Thus, we have proved the converse.
The last statement follows \viatheargument.
\end{proof}

\begin{rem}\label{rem:periodicity-regulation-is-stronger-than-closing}
In our main result, we show that there exists a weighted graph
 covering model $\wgGcal$ in which, as $n$ increases,
 the lower towers in $\wgG_n$ must have periodic orbits.
Nevertheless, the notion of the closing property discussed above
 is not sufficiently strong for this purpose.
To see this, consider a zero-dimensional system with a fixed point $p$
 such that no periodic orbit of least period $2$ exists;
 in addition, there exists a sequence of periodic orbits
 $\seb P_n \sen_{n > 0}$ with $\lim_{n \to \infty}P_n = \seb p \sen$, i.e.
 the orbits converge to $p$.
We assume that each $P_m$ has least period $4$.
In expressing this zero-dimensional system by a weighted graph covering model,
 as $n$ increases, the weighted graph $\wgG_n$ has to separate $P_m$
 from $p$ for smaller $m$ compared to $n$.
We assume that $P_m$ ($m \le n$) are all separated by
 towers of level $n$ and all $P_{n'}$ ($n' > n$) are contained
 by the tower of height $1$ that also contains $p$.
In level $n$, when $P_m$ $(m \le n)$ are all separated from $p$
 and also from each other,
 we have to provide towers that cover $P_m$ ($m \le n$).
We assume that all $P_m$ ($m < n$) are covered by a single tower with
 height $4$
 and $P_n$ is covered by two towers of height $2$.
In the next level, i.e. $n+1$, the two parts will be unified to be covered
 by a single tower of height $4$.
Thus, as $n$ increases, there must exist an $e \in \wgE_n$
 with $l(e) = 2$ while $\barU(e)$ does not contain any periodic orbit
 of least period $2$.  
\end{rem}

Through this observation, we present another notion for weighted graph 
 covering models.
\begin{defn}\label{defn:periodicity-regulated-wg}
Let $\bl$ be a sequence $l_1 < l_2 < \dotsb$ of positive integers.
A weighted covering model $\wgGcal : \covrepa{\wgG}{\fai}$ is
 $\bl$-\textit{periodicity-regulated}
 if
\itemb 
\item for each $n \bpi$ and each $e \in \wgE_n$ with $l(e) \le l_n$,
 $e$ is infinitely constantly covered by circuits, i.e.
 there exists a sequence $e_m \in \wgE_m$ $(m \ge n)$ such that
 $e_n = e$ and $\fai_{m,m'}(e_m) = e_{m'}$ for all $m > m' \ge n$.
\itemn
This definition is possible for a flexible covering model \viatheargument.
\end{defn}

By telescoping, it is evident that the $\bl$-periodicity-regulations
 are preserved.
It is also evident that we can obtain new $\bl$ by `telescoping'.
This notion of $\bl$-periodicity-regulation is
 stronger than the closing property, as shown by \cref{rem:periodicity-regulation-is-stronger-than-closing} and 
 \cref{lem:periodicity-regulated-implies-closing-wg}.
The existence of the $\bl$-periodicity-regulated weighted graph covering
 model will be shown in \cref{thm:main}.

\begin{lem}\label{lem:periodicity-regulated-implies-closing-wg}
Let $\bl : l_1 < l_2 < \dotsb$ be a sequence of positive integers and
 $n \bni$.
Suppose that $\wgGcal : \covrepa{\wgG}{\fai}$ is
 a weighted graph covering model
 that satisfies the $\bl$-periodicity-regulated condition.
Then, $\wgGcal$ has the closing property.
The same is true for $\fgGcal$ via \cref{rem:principle}.
\end{lem}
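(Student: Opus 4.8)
The plan is to unwind the two relevant definitions and reduce everything to the downward-propagation observation recorded at the end of \cref{defn:infinite-constant-covering}. So suppose $e_m \in \wgE_m$ $(m \ge n)$ is an infinite constant covering from some $n \bni$, and set $l := l(e_n)$; by \cref{defn:infinite-constant-covering} we then have $l(e_m) = l$ for every $m \ge n$. We must show that each $e_m$ is a circuit.

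First I would exploit that $\bl : l_1 < l_2 < \dotsb$ is \emph{strictly} increasing, hence unbounded: $l_m \to \infty$, so every sufficiently large $m$ satisfies $l_m \ge l$. Fix such an $m$ with, in addition, $m \ge \max(n,1)$ (so that \cref{defn:periodicity-regulated-wg} is applicable at level $m$). Then $e_m \in \wgE_m$ has $l(e_m) = l \le l_m$, so the $\bl$-periodicity-regulated hypothesis applies to the edge $e_m$: it is infinitely constantly covered by circuits. Unpacking that, there is an infinite constant covering $e'_{m'} \in \wgE_{m'}$ $(m' \ge m)$ from $m$, all of whose edges are circuits, with $e'_m = e_m$; in particular $e_m$ itself is a circuit. (Should one prefer to read ``by circuits'' as a statement about the strictly higher edges $e'_{m'}$, $m' > m$, one still deduces that $e_m = e'_m$ is a circuit, by applying the downward-propagation note of \cref{defn:infinite-constant-covering} to the sequence $(e'_{m'})_{m' \ge m}$.)

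Thus $e_m$ is a circuit for all sufficiently large $m$. Now apply the note at the end of \cref{defn:infinite-constant-covering} to the \emph{original} infinite constant covering $(e_m)_{m \ge n}$: if $e_m$ is a circuit for some $m > n$, then $e_i$ is a circuit for every $i \in [n,m]$. Since the circuit-indices $m$ are arbitrarily large, letting $m \to \infty$ yields that $e_i$ is a circuit for every $i \ge n$. As $(e_m)_{m \ge n}$ was an arbitrary infinite constant covering, this is exactly the closing property of \cref{defn:closing-weighted-covering}. The claim for $\fgGcal$ then follows from the weighted case via \cref{rem:principle}.

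I do not anticipate a genuine obstacle here; the one point needing care is the bookkeeping with two a priori distinct infinite constant coverings — the given one through $e_n$ and the one supplied by periodicity-regulation through $e_m$ — together with the observation that we only need them to share the single edge $e_m$. The conceptual content is simply that strict monotonicity of $\bl$ forces ``short'' edges, and hence circuits, to appear at arbitrarily high levels along the covering, after which the circuit property propagates back down.
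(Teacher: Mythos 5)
Your proof is correct and follows essentially the same route as the paper: pick $m$ large enough that $l_m \ge l(e_m)$ (using that $\bl$ is strictly increasing), apply the $\bl$-periodicity-regulation at level $m$ to conclude $e_m$ is a circuit, and then propagate the circuit property down to all $e_i$ with $i \ge n$ via the note in \cref{defn:infinite-constant-covering}. The extra care you take about the reading of ``covered by circuits'' is harmless and does not change the argument.
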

\begin{proof}
Let $e_m \in \wgE_m$ ($m \ge n$) be an infinite constant covering.
Take some $N > n$ such that $l_N \ge l(e_N)$.
Then, for every $m \ge N$, it follows that $l_m \ge l_N \ge l(e_N) = l(e_m)$.
Thus, $e_m$ with $m \ge N$ are all circuits.
It follows that all $e_m$ with $m \ge n$ are circuits.
The last statement follows \viatheargument.
\end{proof}

Immediately, we have the following lemma.

\begin{lem}\label{lem:periodicity-regulated-implies-basic-set-wg}
Let $\bl : l_1 < l_2 < \dotsb$ be a sequence of positive integers and
 $n \bni$.
Suppose that $\wgGcal : \covrepa{\wgG}{\fai}$ is
 a weighted graph covering model
 that satisfies the $\bl$-periodicity-regulated condition.
Then, it follows that $\wgV_{\infty}$ is a basic set.
\end{lem}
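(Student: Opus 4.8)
The plan is to observe that this lemma is an immediate corollary of the two results already established. First I would invoke \cref{lem:periodicity-regulated-implies-closing-wg}: since $\wgGcal : \covrepa{\wgG}{\fai}$ satisfies the $\bl$-periodicity-regulated condition, it has closing property. Then I would apply \cref{thm:closing-implies-basic-set-wg}, whose ``if'' direction states precisely that $\wgV_{\infty}$ is a basic set whenever $\wgGcal$ has closing property. Combining the two gives the claim. The flexible-covering analogue follows \viatheargument, exactly as in the cited results.

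If one prefers a self-contained argument rather than a two-line deduction, the direct route is to reproduce the relevant half of the proof of \cref{thm:closing-implies-basic-set-wg} under the present (stronger) hypothesis. Writing $\invlim \wgGcal = (X,f)$ and using the decomposition $X = \bigcup_{e \in \wgE_n}\barU(e)$ from \cref{nota:we-can-define-a-natural-base-map}, for each $x \in X$ one follows the unique edges $e_k \in \wgE_k$ $(k \bni)$ with $x \in \barU(e_k)$. The sequence $l(e_k)$ is non-decreasing. If $\lim_{k \to \infty} l(e_k) < \infty$, then from some level $n$ on the sequence $(e_m)_{m \ge n}$ is an infinite constant covering; by $\bl$-periodicity-regulation (equivalently, by the closing property it implies) each $e_m$ is a circuit, so $x$ is periodic and its orbit meets $U(s(e_m))$ exactly once, hence enters $\wgV_{\infty}$ once. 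If $\lim_{k \to \infty} l(e_k) = \infty$, then the distances of $x$ to the two ends of the towers $\barU(e_k)$ are non-decreasing and at least one tends to $\infty$, so the orbit of $x$ enters $\wgV_{\infty}$ at most once. Finally, for any open $U \supseteq \wgV_{\infty}$, compactness of the sets $U(\wgV_i)$ gives $U(\wgV_m) \subseteq U$ for large $m$, whence $X = \bigcup_{0 \le j < l_m} f^j(U)$ with $l_m := \max \seb l(e) \mid e \in \wgE_m \sen$, so every positive orbit enters $U$.

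There is no genuine obstacle here: the content has already been isolated in \cref{lem:periodicity-regulated-implies-closing-wg} and \cref{thm:closing-implies-basic-set-wg}, and all that remains is to chain them together. The only point worth stating explicitly is that $\bl$-periodicity-regulation is used solely through the closing property it guarantees, so no new estimate is required.
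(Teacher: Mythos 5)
Your proposal matches the paper's proof exactly: the paper also derives the lemma by chaining \cref{lem:periodicity-regulated-implies-closing-wg} with \cref{thm:closing-implies-basic-set-wg}. The extra self-contained sketch is sound but not needed.
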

\begin{proof}
The proof follows from \cref{thm:closing-implies-basic-set-wg,lem:periodicity-regulated-implies-closing-wg}.
\end{proof}

\subsection{Main theorem for weighted graph covering models}\label{subsec:mainpart}

We now present a version of the main theorem.

\begin{thm}\label{thm:main}
Let $(X,f)$ be an invertible zero-dimensional system.
For every sequence $\bl: l_1 < l_2 < \dotsb$ of positive integers,
 $(X,f)$ admits a \bidirectionals weighted graph covering model
 $\wgGcal : \covrepa{\wgG}{\fai}$ that is $\bl$-periodicity-regulated.
It follows that $\wgV_{\infty}$ is a basic set.
The same is true for $\fgGcal$ and $\fgV_{\infty}$ \viatheargument.
\end{thm}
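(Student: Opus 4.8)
The plan is to build the model $\wgGcal$ directly, as the inverse system of weighted graphs arising from a refining sequence of clopen \KR-type partitions of $X$ equipped with a tower decomposition — essentially the basic-covering construction of \cite{Shimomura_2014SpecialHomeoApproxCantorSys}, but now collapsing long deterministic runs of floors into single weighted edges and permitting short edges only around periodic orbits. Once such a $\wgGcal$ is produced the theorem is complete: it is $\bl$-periodicity-regulated by construction, $\wgV_{\infty}$ is a basic set by \cref{lem:periodicity-regulated-implies-basic-set-wg}, and the statements for $\fgGcal$ and $\fgV_{\infty}$ follow \viatheargument. As $(X,f)$ is homeomorphic, one carries the \bidirectionalitys condition through the induction, or recovers it at the end by telescoping (cf.\ the discussion after \cref{defn:weighted-graph-covering-model}).

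First I would fix, using \cref{thm:0dim=covering} and zero-dimensionality, a refining sequence of finite clopen partitions $\Qcal_1 \refine \Qcal_2 \refine \cdots$ generating the topology of $X$. The induction produces weighted graphs $\wgG_n$ together with the associated clopen partitions $\Ucal(\baiG_n)$ and tower decompositions $X = \bigcup_{e \in \wgE_n}\barB(e)$ (notation of \cref{nota:open-sets-of-vertices,nota:we-can-define-a-natural-base-map}), maintaining: (i) $\Ucal(\baiG_n)$ refines $\Ucal(\baiG_{n-1})$ and $\Qcal_n$, whence $\invlim\wgGcal = (X,f)$; and (ii) every $e \in \wgE_n$ with $l(e) \le l_n$ is a circuit carried forward by an infinite constant covering through circuits committed to already at stage $n$ (the base of such a circuit is allowed to be a fat clopen set, to be thinned at later stages). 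At stage $n$ the stage-$(n-1)$ commitments are honoured — committed circuits carried by their committed constant covers, only their bases possibly clopen-refined — while the remaining ``active'' towers may be re-chunked freely.

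The stage-$n$ step. After refining the clopen partition to beat $\Qcal_n$ and to clopen-separate small-period periodic structure as finely as the present resolution allows, I reorganise the active part of $X$. Wherever $x$ is aperiodic or has least period $> l_n$, one has $f^i(x) \neq x$ for $1 \le i \le l_n$, so a small enough clopen neighbourhood of $x$ misses its first $l_n$ iterates, and the \KR-type packing argument used for basic coverings places $x$ in a tower of height $> l_n$; these contribute no short edges. For the part of $X$ now clopen-separated into pieces each housing only periodic orbits of period $\le l_n$ — these pieces thin stably, since a limit of points of least period $p$ again has period dividing $p$ — I cover each piece by circuits of lengths the relevant least periods, with possibly fat clopen bases, and \emph{commit} to covering them constantly by circuits at all later stages; this is sustainable because a clopen refinement of a circuit tower is again a circuit tower, and the fat bases are thinned at later stages onto the orbits they house. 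Everything still active — periodic orbits of period $\le l_n$ not yet clopen-separated, and aperiodic points not yet drawn into a tall tower, each accumulating onto some already-committed periodic orbit whose period (by the same divisibility fact) divides its own — receives \emph{no} short tower of its own: it is absorbed, as excluded base-floor material in the sense of \cref{nota:we-can-define-a-natural-base-map}, into the committed circuit of the smallest-period periodic orbit it accumulates on (the length-$1$ circuit of the fixed point, in the scenario of \cref{rem:periodicity-regulation-is-stronger-than-closing}). Because one keeps beating finer $\Qcal_n$, keeps pulling aperiodic points into taller towers, and keeps separating newly-isolated periodic orbits, the base of each committed circuit of length $h$ contracts, as $n \to \infty$, onto a single periodic orbit of least period $h$, which validates the constant covering in (ii).

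The main obstacle — exactly the trap flagged in \cref{rem:periodicity-regulation-is-stronger-than-closing} — is never to manufacture a short edge that is transient: one that must later be merged into a longer tower, or whose committed constant covering would contract onto a periodic orbit of the wrong period. The discipline above is designed to avoid this (short edges are born only as circuits around already-separated periodic orbits with an immediately committed circuit-cover, while everything still en route rides inside the nearest committed lower-period circuit), and the divisibility of periods under limits is what keeps the discipline self-consistent. The technical heart is to verify that this can always be executed while simultaneously beating $\Qcal_n$, honouring the stage-$(n-1)$ commitments, and keeping the graph homomorphisms \pdirectionals (and, if one wishes, \bidirectional): a finite bookkeeping argument at each stage, leaning on compactness to pass to clopen refinements fine enough to peel off one more layer of periodic orbits and tall enough to exceed $l_n$ off the relevant fixed-point sets, and on the fact that a continually refined circuit base retains at most one periodic orbit in the limit. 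Granting this, (i) and (ii) hold, so $\wgGcal$ is a $\bl$-periodicity-regulated weighted covering model of $(X,f)$, and the basic-set conclusion and the flexible version follow as above.
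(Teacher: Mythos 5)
Your high-level route is the same as the paper's (an inductive construction of clopen tower decompositions realized as weighted graphs, with circuits of exact least period around low-period periodic points and tall towers elsewhere, diameters shrinking so that the inverse limit is conjugate to $(X,f)$), but the actual content of the theorem lives in the single-stage statement that you defer as ``a finite bookkeeping argument at each stage.'' Given the stage-$n$ decomposition, one must produce a stage-$(n+1)$ decomposition, compatible with the old one in the walk-following sense of \cref{lem:follow}, in which \emph{every} tower of height $\le L=l_{n+1}$ contains a periodic point of that exact least period. This is \cref{prop:main}, and its proof is the bulk of the work: an outer induction over the period threshold $l=1,\dotsc,L$, in which the compact set of exact-period-$l$ points of the still-uncovered region is covered by finitely many height-$l$ circuit towers with clopen (possibly fat) bases that accompany the existing towers properly; an inner induction absorbing the tall towers one covering piece at a time; and, decisively, the accompaniment/proper-accompaniment device with the height-extension rule $h(s):=h(\beta)+h(j,\alpha)-1-i$ followed by cutting, which is exactly what prevents collisions between new tall towers and already-placed towers from leaving behind short remnant towers carrying no periodic orbit of matching period. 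That collision/cutting step is where the trap of \cref{rem:periodicity-regulation-is-stronger-than-closing} actually bites, and your sketch supplies no mechanism for it: the ``\KR-type packing argument used for basic coverings'' produces towers but gives no period control, and your ``discipline'' states what must be avoided without showing how the packing avoids it while simultaneously refining $\Qcal_n$, honouring earlier commitments, and keeping the covers \pdirectional.

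Two supporting claims are also not justified as stated. First, it is not true that everything still active ``accumulates onto some already-committed periodic orbit'': an aperiodic point has no period to which your divisibility remark applies, and a not-yet-separated periodic orbit of period $\le l_n$ need not accumulate on one of the finitely many orbits committed so far; so the absorption step is not well defined as described. (The paper sidesteps this entirely by covering the whole set of exact-period-$l$ points of the remaining region at once, so no separation of individual orbits and no accumulation argument is needed.) Second, the ``commitment'' to constant covering by circuits is not free: one must verify at each later stage that the decomposition really maps the new circuit onto the single old edge, i.e.\ that the new circuit's base lies in the old base floor and follows the one-edge walk. In the paper this propagation is automatic from the compatibility forced by \cref{lem:follow} together with the inductive hypothesis that each short edge $e$ with $l(e)\le l_m$ already carries a periodic point of least period $l(e)$ in its base floor (the whole orbit then lies inside the old tower, and only its base-floor point lies in any old base floor, so the new circuit must sit over that point and cover $e$ constantly). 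Without a proof of the one-stage proposition and of this propagation, the proposal restates the difficulty rather than resolving it.
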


The main part of the proof is somewhat lengthy
 and is presented in \cref{prop:main}.
First, we present \cref{cor:KriegerLemma} and its proof under the assumption
 that \cref{thm:main} holds.
Second, we present the proof of \cref{thm:main} under the assumption that
 \cref{prop:main} holds.
The proof of \cref{prop:main} is presented at the end of this subsection.
We start by presenting some devices for the argument.
\textbf{The following argument is applicable only to weighted graphs.}
It is not applicable to flexible graphs.
Nevertheless,
 the conclusion that is related to flexible covering models in \cref{thm:main}
 is also valid via \cref{rem:principle}.
A corollary of \cref{thm:main} is an analogue of Krieger's lemma
 (cf. \cite[Lemma 2.2]{Boyle_1984LowerEntroFactOfSoficSys})
 for the zero-dimensional case.
\begin{cor}[Krieger's Marker Lemma]\label{cor:KriegerLemma}
Let $(X,f)$ be an invertible zero-dimensional system.
Let $L \ge 1$ and $\ep > 0$.
Then, there exists a closed and open set $F$ such that
$\barF := \bigcup_{0 \le i \le L}f^i(F)$ is a tower, and
 if $x \notin \bigcup_{-L \le i \le L} f^i(F)$,
 then there exists a periodic point $p$ of least period $\le L$
 such that $d(x,p) \le \ep$.
\end{cor}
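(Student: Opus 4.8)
The plan is to deduce \cref{cor:KriegerLemma} from \cref{thm:main}. Fix any strictly increasing sequence $\bl : l_1 < l_2 < \dotsb$ of positive integers and let $\wgGcal : \covrepa{\wgG}{\fai}$ be an $\bl$-periodicity-regulated weighted graph covering model of $(X,f)$ provided by \cref{thm:main}. Fixing a topological conjugacy $(X,f) \cong \invlim\wgGcal$ and shrinking $\ep$ according to the (uniformly continuous) inverse conjugacy, it suffices to prove the statement for $\invlim\wgGcal$ together with its associated basic covering $\baiGcal$ of \cref{nota:from-weighted-to-basic-covering}; so assume $(X,f) = \invlim\wgGcal = \invlim\baiGcal$. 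Choose $n$ so large that $l_n \ge L$ and that the clopen partition $\seb U(\baiv) \mid \baiv \in \baiV_n \sen$ has mesh $< \ep$ (possible since these partitions generate the topology of the compact space $X$), and use the decomposition $X = \bigsqcup_{e \in \wgE_n}\barB(e)$ of \cref{nota:we-can-define-a-natural-base-map}, in which $\barB(e)$ is a tower of height $l(e)$ whose floors are contained in the diameter-$<\ep$ sets $U(\baiv_{e,i})$.

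If $l(e) \le L$, then $l(e) \le l_n$, so the $\bl$-periodicity-regulated condition makes $e$ infinitely constantly covered by circuits; as noted after \cref{thm:closing-implies-basic-set-wg}, $\bigcap_{m \ge n}\barB(e_m)$ is then a periodic orbit of least period $l(e) \le L$ lying in $\barB(e)$ and meeting every floor of $\barB(e)$, so every point of $\barB(e)$ is within $\ep$ of a periodic point of period $\le L$. Hence the union of the towers of height $\le L$ already lies in the $\ep$-neighbourhood of the set of periodic points of period $\le L$, and for those points the conclusion holds. It therefore remains to find a clopen $F$ for which $\barF := \bigcup_{0 \le i \le L}f^i(F)$ is a tower of height $L+1$ and $\bigcup_{-L \le i \le L}f^i(F)$ contains every ``tall'' tower $\barB(e)$ with $l(e) > L$.

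I would build $F$ by cutting each tall tower into consecutive blocks and letting $F$ be the union of the (clopen) base floors of these blocks. If $S_e \subseteq \seb 0,1,\dotsc,l(e)-1 \sen$ denotes the set of cut-levels chosen in $\barB(e)$, then ``$F, f(F), \dotsc, f^L(F)$ pairwise disjoint'' becomes ``the levels in $S_e$ are pairwise $\ge L+1$ apart'', and ``$\bigcup_{-L \le i \le L}f^i(F) \supseteq \barB(e)$'' becomes ``consecutive levels of $S_e$ differ by $\le 2L+1$ and $S_e$ reaches within $L$ of both ends of $\barB(e)$''; since $l(e) > L$, such an $S_e$ exists, with consecutive gaps taken in $[L+1,2L+1]$. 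Granting a coherent such choice, $\barF$ is a tower of height $L+1$, and any $x$ outside $\bigcup_{-L\le i\le L}f^i(F)$ lies in a tower of height $\le L$, hence within $\ep$ of a periodic point of period $\le L$, which is \cref{cor:KriegerLemma}.

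The main obstacle is the word ``coherent'': the cuts in different towers are coupled, because the top floor of $\barB(e)$ is mapped by $f$ into $U(r(e)) = \bigsqcup_{s(e') = r(e)}B(e')$, i.e. into the base floors of the towers stacked immediately above $\barB(e)$, and if the blockings of $\barB(e)$ and of those $\barB(e')$ do not fit together then some $f^i(F)$ with $1 \le i \le L$ leaks past the top of $\barB(e)$ and meets $F$ in another tall tower, destroying the disjointness that makes $\barF$ a tower. The model case is an odometer, where at level $n$ there is a single cyclic tower whose height $N$ is typically not a multiple of $L+1$: no constant block length works, but a cyclic tiling with block lengths varying in $\{L+1,\dotsc,2L+1\}$ does, and one exists for every $N \ge L+1$. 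For the general case I would propagate the cutting through the covering maps, refining the level-$n$ decomposition by some later $\wgG_{n'}$ ($n' > n$) so that each tall tower, together with the way its top enters the base floors of other towers, is cut consistently; this turns $X$ into, up to the controlled error concentrated near the short towers, a single \KR tower over the clopen base $F$, and is the technical heart of the argument.
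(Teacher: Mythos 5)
Your reduction to \cref{thm:main} and the treatment of the short towers match the paper's proof exactly: choose $n$ with $l_n \ge L$, use the decomposition $X = \bigcup_{e \in \wgE_n}\barB(e)$, and use $\bl$-periodicity-regulation to place a periodic orbit of least period $l(e)\le L$ through every floor of each tower with $l(e)\le L$ (you are in fact more careful than the paper in insisting that the level-$n$ floors have diameter $\le \ep$). The gap is in the second half: you never actually produce the clopen set $F$. You correctly identify that the whole difficulty is the cross-tower compatibility of the cut-levels near the tops of the tall towers, but you leave it as ``the technical heart of the argument,'' and the remedy you sketch --- refining the level-$n$ decomposition by some deeper $\wgG_{n'}$ --- does not resolve it: at level $n'$ the towers are simply taller and the same tiling/compatibility problem recurs when their tops feed into other bases, and in addition the set of ``short'' towers (your exceptional set) changes with the level, so nothing is gained by refining alone. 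As written, the claim that every point outside $\bigcup_{-L\le i\le L}f^i(F)$ lies in a short tower rests precisely on the unproven coherent choice.

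The paper closes exactly this gap with a local device that avoids any global coherence. In each tall tower ($l(e)\ge L+1$, write $L'=L+1$) it cuts only at the levels $0,L',2L',\dotsc,k(e)L'$ with $k(e)L'\le l(e)-L'$, so that $f^i(E(e))$ for $0\le i\le L$ never leaves $\barB(e)$; hence the sets $f^i(E)$, $0\le i\le L$, are disjoint with no interaction between towers. The leftover strip at the top (of height at most $L$) is then handled by a conditional definition: on the single floor at height $(k(e)+1)L'$ one adds to $F$ only those points $x$ with $f^i(x)\notin E$ for all $0\le i\le L$. Disjointness of $f^i(F)$, $0\le i\le L$, is then almost automatic (added points avoid $E$ forward by definition, and no point can climb to a floor of height $\ge L+1$ in another tower in at most $L$ steps), and every point of a tall tower is within $L$ steps, forward or backward, of $F$: either it sits over one of the periodic cuts, or its pullback to the leftover floor is an added point, or that pullback hits $E$ within $L$ forward steps. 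So the complement of $\bigcup_{-L\le i\le L}f^i(F)$ is contained in the short towers, where your periodic-point argument applies. If you replace your ``coherent tiling with gaps in $[L+1,2L+1]$'' by this bottom-anchored cutting plus the conditional top floor, your proof is complete and coincides with the paper's.
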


\begin{proof}
Let $\bl : l_1 < l_2 < \dotsb$ be an arbitrary sequence of
 positive integers.
By \cref{thm:main}, we have a weighted graph covering model
 $\wgGcal : \covrepa{\wgG}{\fai}$ of $(X,f)$
 that is $\bl$-periodicity-regulated.
We can assume that $\invlim \wgGcal = (X,f)$ by the conjugating map.
Then, there exists an $n \bpi$ such that
 $L \le l_n$.
Let $l = l_n$, $\wgG = \wgG_n$, $\wgV = \wgV_n$, and $\wgE = \wgE_n$.
By \cref{nota:we-can-define-a-natural-base-map},
 we have a decomposition $X = \bigcup_{e \in \wgE}\barB(e)$.
Let $L' := L + 1$.
Further, let $J := \seb e \in \wgE \mid l(e) \ge L' \sen$ and
 $P := \seb e \in \wgE \mid l(e) \le L \sen$.
Then, $J \cup P = \wgE$.
For each $e \in J$, take the maximal integer $k(e) \ge 0$ with
 $L' k(e) \le l(e) - L'$.
For each $e \in J$, we define
 $E(e) := \bigcup \seb f^{L'i}(B(e)) \mid 0 \le i \le k(e)\sen$.
We set $E := \bigcup_{e \in J} E(e)$.
Then, for each $e \in J$,
 $f^i(E(e))$ $(0 \le i \le L)$ are mutually disjoint and
 $f^i(E(e)) \subseteq \barB(e)$ for all $0 \le i \le L$.
If $L' k(e) = l(e) - L'$ ($e \in J$),
 then we denote $\barE(e) := \bigcup_{0 \le i \le L}f^i(E(e))$.
In this case, we denote $F(e) := E(e)$.
If $L' k(e) < l(e) - L'$ ($e \in J$),
 then we denote 
\[F(e) := E(e) \cup \seb x \in f^{L' k(e) + L'}(B(e)) \mid
 f^i(x) \notin E \text{ for all } 0 \le i \le L \sen.\]
Further, we denote $F := \bigcup_{e \in J} F(e)$.
Then, $F$ is closed and open and $f^i(F)$ ($0 \le i \le L$) 
 are mutually disjoint.
If $x \notin \bigcup_{-L \le i \le L}f^i(F)$, then
 $x \in \barE(e)$ for some $e \in P$.
Thus, there exists a periodic point $p \in \barE(e)$ with least period 
 $\le L \le l$ and $d(x,p) \le \ep$.
This concludes the proof.
\end{proof}

To state \cref{prop:main}, we introduce some notions related to towers.
Let $(X,f)$ be an invertible zero-dimensional system
and let $\barU = \bigcup_{0 \le i < h}f^i(U)$ be a tower.
Suppose that $V \cap \barU = \kuu$
 and that $\barV := \bigcup_{0 \le i' < h'}f^{i'}(V)$ is  another tower.
We say that $\barV$ \textit{accompanies the tower} $\barU$
 if $f^{i'}(V) \cap f^i(U) \nekuu$ $(0 \le i < h, 0 \le i' < h')$
 implies that $f^{i'}(V) \subseteq f^i(U)$.
We say that a tower $\barV := \bigcup_{0 \le i' < h'}f^{i'}(V)$
 accompanies the tower $\barU := \bigcup_{0 \le i < h}f^i(U)$
 \textit{properly}
 if  $\barV$ accompanies  $\barU$ and
 $f^{h'-1}(V) \cap f^i(U) \nekuu$ with $0 \le i < h$
 implies that $i = h -1$; i.e. if the tower $\barV$ meets the tower $\barU$,
 then the former always goes through the latter to the end of the latter.
Note that, when $h = 1$, the accompaniments are always proper.

For a weighted graph $\wgG = (\wgV,\wgE)$,
 we say that a pair of maps $B = (B_V,B_E)$ is a \textit{base map}
 if
\enumb
\item
 there exists an injective map
 $B_V : \wgV \to \seb U \mid U \text{ is closed and open and } U \nekuu\sen$,
\item
 there exists an injective map
 $B_E : \wgE \to \seb U \mid U \text{ is closed and open and } U \nekuu\sen$,
\item
 $B_V(v)$ $(v \in \wgV)$ are mutually disjoint, 
\item\label{item:base-mutually-disjoint}
 $B_E(e)$ $(e \in \wgE)$ are mutually disjoint,
\item for each $e \in \wgE$, $B_E(e) \subseteq B_V(s(e))$ 
 and $f^{l(e)}(B_E(e)) \subseteq B_V(r(e))$,
\item for each $v \in \wgV$,
 $B_V(v) = \bigcup_{e \in \wgE, s(e) = v}B_E(e)$,
\item for each $e \in \wgE$,
 $\barB_E(e) := \bigcup_{0 \le i < l(e)}f^i(B_E(e))$ is a tower,
\item\label{item:tower-mutually-disjoint}
 $\barB_E(e)$ $(e \in \wgE)$ are mutually disjoint, and
\item $X = \bigcup_{e \in \wgE}\barB_E(e)$.
\enumn
It is evident that \cref{item:tower-mutually-disjoint}
 implies \cref{item:base-mutually-disjoint}.
Note that, for each $v \in \wgV$, we can obtain 
 $B_V(v) = \bigcup_{e \in \wgE, r(e) = v}f^{l(e)}(B_E(e))$ with
 any base map.
We write $B : \wgG \to (X,f)$.
For a singleton weighted graph $\wgG_0 = (\seb v_0 \sen, \seb e_0 \sen)$
 in which $l(e_0) = 1$,
 the base map $B : \wgG \to (X,f)$ is well defined such that
 $B_V(v_0) = X$ and $B_E(e_0) = X$.
Suppose that there exists a base map $B : \wgG \to (X,f)$.
Then, there exists a unique natural map $\baifai : X \to \baiV$.
We denote $U(v) := \baifai^{-1}(v)$ for all $v \in \wgV$.
We say that a point $x \in X$ \textit{follows} a walk
 $w = e_1 e_2 \dotsb e_k \in \wgW$
 if $x \in B_E(e_1)$, and for all $1 \le i < k$,
 $f^{\sum_{j = 1}^i l(e_j)}(x) \in B_E(e_{i+1})$.
By the definition of the base map,
 it then follows that $f^{\sum_{j = 1}^k l(e_j)}(x) \in B_V(r(e_k))$.
For a weighted covering map
 $\fai : \wgG_1 \to \wgG_2$ and the base maps
 $B_i :\wgG_i \to (X,f)$ $(i= 1,2)$,
 we say that $B_1$ is \textit{compatible} with $B_2$
 if, for every $e \in E_1$, all points $x \in {B_1}_E(e)$ follow
 the same walk $\fai_E(e)$.
Even if $B_1$ is compatible with $B_2$, there may be multiple definitions
 of $B_1$.
In contrast, if $B_1$ is determined first, then
 there exists a unique $B_2$ that is compatible with $B_1$.
\begin{nota}
For a weighted graph $\wgG = (\wgV,\wgE)$
 and a base map $(B_V,B_E)$,
 we denote $U(v) := B_V(v)$ for each $v \in \wgV$ and
 $U(e) := B_E(e)$ for each $e \in \wgE$.
\end{nota}

\begin{lem}\label{lem:follow}
Let $(X,f)$ be an invertible zero-dimensional system.
Let $B : \wgG \to (X,f)$ be a base map with $\wgG = (\wgV,\wgE)$.
We can choose an arbitrarily small $\ep > 0$ 
 such that,
 for every vertex $v, v' \in \wgV$, every subset $K \subset U(v)$,
 and every $n \bpi$ with $f^n(K) \subset U(v')$,
 the following holds:
 if $\diam(f^i(K)) \le \ep$ for all $0 \le i \le n$,
 then
 all $x \in K$ follow the same walk, regardless of the choice of $x \in K$.
\end{lem}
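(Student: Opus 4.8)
The plan is as follows. First I would record the shape of the cylinders: from the construction of the natural projection $\baifai : X \to \baiV$ attached to the base map $B$, one has $U(v) = B_V(v) = \bigcup_{e \in \wgE,\, s(e) = v} B_E(e)$ for every $v \in \wgV$ — a disjoint union of base floors, by the base-map axioms — while $U(\baiv_{e,i}) = f^i(B_E(e))$ for every interior vertex $\baiv_{e,i}$ with $0 < i < l(e)$ (see \cref{nota:basic-graph}), and $U$ takes pairwise disjoint values on distinct vertices of $\baiG$. Since $\wgE$ is finite and the base floors $B_E(e)$ are pairwise disjoint and closed (hence compact) in the compact space $X$, the number $\delta_0 := \min\{\,\dist(B_E(e),B_E(e')) \mid e \ne e' \in \wgE\,\}$ is strictly positive (read $\delta_0 := 1$ when $\abs{\wgE} = 1$). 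I would then take $\ep$ to be any positive number with $\ep < \delta_0$; such $\ep$ can be chosen arbitrarily small, and by the choice of $\delta_0$ a nonempty subset of diameter at most $\ep$ contained in some $B_V(w)$ must lie in a single base floor.

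Given such a $K$ (the case $K \iskuu$ being vacuous), I would construct the required walk by induction. As $K \nekuu$, $K \subseteq U(v) = \bigcup_{s(e) = v} B_E(e)$ and $\diam K \le \ep$, there is a unique $e_1 \in \wgE$ with $s(e_1) = v$ and $K \subseteq B_E(e_1)$; put $s_0 := 0$ and $s_1 := l(e_1)$. By the base-map axioms, $f^{s_1}(K) \subseteq f^{l(e_1)}(B_E(e_1)) \subseteq B_V(r(e_1)) = \bigcup_{s(e) = r(e_1)} B_E(e)$, and since $\diam f^{s_1}(K) \le \ep$ there is a unique $e_2$ with $s(e_2) = r(e_1)$ and $f^{s_1}(K) \subseteq B_E(e_2)$. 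Iterating produces edges $e_1, e_2, \dotsc$ with $s(e_{j+1}) = r(e_j)$, and, writing $s_j := l(e_1) + \dotsb + l(e_j)$, inclusions $f^{s_j}(K) \subseteq B_E(e_{j+1})$ for all $j \ge 0$; note that only the values $\diam f^{s_j}(K) \le \ep$ with $s_j \le n$ are needed here.

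Because each $l(e_j) \ge 1$, the $s_j$ strictly increase, so (using $n \ge 1$, whence $s_0 = 0 < n$) there is a least $k \ge 1$ with $s_k \ge n$. The key step is to see that $s_k = n$. If $s_{k-1} < n < s_k = s_{k-1} + l(e_k)$, then $0 < n - s_{k-1} < l(e_k)$ and $f^n(K) = f^{n-s_{k-1}}(f^{s_{k-1}}(K)) \subseteq f^{n-s_{k-1}}(B_E(e_k)) = U(\baiv_{e_k,\,n-s_{k-1}})$, whose right-hand side is disjoint from $U(v')$ because $\baiv_{e_k,\,n-s_{k-1}}$ is an interior vertex while $v' \in \wgV$; as $f^n(K) \nekuu$, this contradicts $f^n(K) \subseteq U(v')$. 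Hence $s_k = n$, and then $f^n(K) \subseteq B_V(r(e_k)) = U(r(e_k))$ together with $f^n(K) \subseteq U(v')$ and $f^n(K) \nekuu$ forces $r(e_k) = v'$. Thus $w := e_1 e_2 \dotsb e_k$ is a walk in $\wgG$ from $v$ to $v'$, and the inclusions $K \subseteq B_E(e_1)$ and $f^{s_j}(K) \subseteq B_E(e_{j+1})$ $(1 \le j < k)$ say exactly that every $x \in K$ follows $w$; since $w$ was built from $K$ alone, it does not depend on the choice of $x$.

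I expect the only genuinely delicate point to be the equality $s_k = n$, and it is precisely there that the hypothesis $v' \in \wgV$ — as opposed to $v'$ being an arbitrary vertex of the subdivided graph $\baiG$ — is essential: it is what guarantees $U(v')$ avoids all the interior cylinders $U(\baiv_{e,i})$, $0 < i < l(e)$, i.e., all the cylinders produced by the length-weighting. Everything else is the finite-separation observation of the first paragraph together with the straightforward induction.
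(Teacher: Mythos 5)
The paper itself omits the proof of this lemma, so there is no argument of the author's to compare yours against; judged on its own, your proof is correct. The three ingredients all check out: the separation constant $\delta_0$ between the finitely many pairwise disjoint closed base floors $B_E(e)$ is positive by compactness, so any nonempty set of diameter $\le \ep < \delta_0$ inside some $B_V(w)=\bigcup_{s(e)=w}B_E(e)$ sits in a unique floor; the itinerary induction only ever invokes the diameter hypothesis at times $s_j \le n$, as you note; and the disjointness of the towers $\barB_E(e)$ and of the floors within a tower does force $f^{\,n-s_{k-1}}(B_E(e_k))$ to miss $B_V(v')$ whenever $0 < n-s_{k-1} < l(e_k)$, so $s_k = n$ and $r(e_k)=v'$. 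It is worth remarking that your argument yields slightly more than the bare statement -- the common walk runs from $v$ to $v'$ and has total length exactly $n$ -- which is precisely what is needed when the lemma is invoked in the proof of \cref{thm:main} to define $\fai_{n+1,E}$ compatibly with the weight (length) condition.
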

\begin{proof}
We omit the proof as it is trivial.
\end{proof}

\begin{prop}\label{prop:main}
Let $(X,f)$ be an invertible zero-dimensional system
and let $B : \wgG \to (X,f)$ be a base map with $\wgG = (\wgV,\wgE)$.
Let $\ep > 0$ be arbitrarily small so as to satisfy the condition
 of \cref{lem:follow}.
Let $L > 0$ be a positive integer.
There exist an integer $r \ge 1$,
 a finite set
 $\seb U(j) \mid  1 \le j \le r\sen$ of closed and open subsets,
 integers $0 < h_j$ $(1 \le j \le r)$, and
 $v_j, v'_j \in \wgV$ $(1 \le j \le r)$ that satisfy the following:
\enumb
\item\label{prop:tower} for each $1 \le j \le r$,
 $\barU(j) := \bigcup_{0 \le i < h_j}f^i(U(j))$
 is a tower,
\item\label{prop:diameter}
 $\diam(f^i(U(j))) \le \ep$ for all $0 \le i \le h_j, 
 1 \le j \le r$,
\item\label{prop:disjoint}
 $\barU(j)$ $(1 \le j \le r)$ are mutually disjoint,
\item\label{prop:total} $X = \bigcup_{1 \le j \le r}\barU(j)$,
\item\label{prop:vertex} $U(j) \subseteq U(v_j)$
 and $f^{h_j}(U(j)) \subseteq U(v'_j)$ for each
 $1 \le j \le r$, and
\item\label{prop:periodic}
 for each $1 \le j \le r$ with $h_j \le L$,
 the tower $\barU(j)$
 contains a periodic point with least period $h_j$.
\enumn
\end{prop}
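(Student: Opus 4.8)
The plan is to build the required decomposition by refining the base-map towers $\barB_E(e)$ one edge at a time, using $f^{-n}$-images of small closed-and-open sets to achieve the diameter bound, and then cutting each long tower into blocks of a fixed length $L' := L+1$ so that every resulting short tower is forced, by the $\bl$-periodicity-regulated structure inherited from $\wgGcal$, to carry a periodic orbit of the correct least period. First I would fix $\ep > 0$ as in \cref{lem:follow} and pick $n$ so large that every atom $U(\baiv)$ $(\baiv \in \baiV_n)$ of the induced partition has diameter $\le \ep$; this is possible since $\bigcup_n \Ucal_n$ generates the topology. Each edge $e \in \wgE$ gives a tower $\barB_E(e) = \bigcup_{0 \le i < l(e)} f^i(B_E(e))$, and refining $B_E(e)$ by the partition $\Ucal_n$ (restricted to the base floor) replaces $\barB_E(e)$ by finitely many subtowers, each of height $l(e)$, whose every floor has diameter $\le \ep$ by \cref{lem:follow}; this handles \cref{prop:diameter}, while \cref{prop:tower,prop:disjoint,prop:total,prop:vertex} follow because refining bases of a tower preserves the tower structure and the covering relation, giving us $v_j = s(e)$, $v'_j = r(e)$.

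The substantive point is \cref{prop:periodic}: after refinement we must further chop the tall towers so that every remaining tower of height $\le L$ sits inside a periodic orbit. Here I would use exactly the block-cutting device from the proof of \cref{cor:KriegerLemma}: on a refined subtower coming from $e$ with $l(e) \ge L'$, carve off blocks of height $L'$ from the bottom, leaving a final block of height strictly between $0$ and $L'$ (a ``remnant''), which can be absorbed into the preceding full block so that the partition of $X$ into towers is exact; the full blocks have height $L' > L$ and so impose no periodicity requirement. For a refined subtower coming from $e$ with $l(e) \le L$, the edge $e$ satisfies $l(e) \le l_m$ for every $m$ with $l_m \ge L$, hence — passing if necessary to a telescoping of $\wgGcal$ so that the relevant level is one where $l_n \ge L$ — the $\bl$-periodicity-regulated condition gives an infinite constant covering $e = e_n, e_{n+1}, e_{n+2}, \dotsc$ with $\fai_{m,m'}(e_m) = e_{m'}$, and since $\bigcap_{m \ge n} \barU(e_m)$ is then a periodic orbit of least period $l(e)$ lying inside $\barU(e)$, I can shrink the refined base further so that the subtower still contains this periodic orbit. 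Intersecting all finitely many refinements of base floors that were demanded above — the diameter refinement, the block-cutting, and the ``contains a periodic point'' refinement — yields a single common refinement $\seb U(j) \sen$ with the stated heights $h_j$.

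The main obstacle I anticipate is bookkeeping the interaction between the three refinements without destroying exactness of the decomposition $X = \bigcup_j \barU(j)$: refining a base floor inside a tower is harmless, but the block-cutting introduces a remnant of height $< L'$ that is not itself of the form ``a tower of height dividing $L'$,'' and one must verify that merging the remnant back into the last $L'$-block (as done in \cref{cor:KriegerLemma} via the set $F(e)$) is compatible with also demanding that the periodic orbit be captured and that diameters stay $\le \ep$ — in particular the merged block can have height as large as $2L' - 1 > L$, so it still triggers no periodicity obligation, which is what saves the argument. A secondary point to check carefully is that one is allowed to telescope: since $\bl$-periodicity-regulation and the existence of a base map are both preserved under telescoping, and the statement of the proposition is about a single weighted graph $\wgG$ equipped with a base map $B : \wgG \to (X,f)$ (which we may take to be $\wgG_n$ of a model $\wgGcal$ for $(X,f)$ at a level with $l_n \ge L$), this causes no loss of generality.
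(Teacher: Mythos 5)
There is a genuine gap, and it is structural: your argument is circular. \cref{prop:main} is the engine used in the paper to \emph{prove} \cref{thm:main}, i.e.\ to construct a $\bl$-periodicity-regulated weighted covering model level by level; at the moment the proposition is applied, only the single weighted graph $\wgG$ with its base map $B : \wgG \to (X,f)$ is available, and no periodicity-regulated model of $(X,f)$ exists yet. Your treatment of the short towers (height $\le L$) invokes exactly that model: you pass to ``a level of $\wgGcal$ with $l_n \ge L$'' and use the infinite constant covering by circuits to produce the periodic orbit of least period $l(e)$. But the hypothesis of the proposition is an \emph{arbitrary} base map; nothing guarantees that an edge $e$ with $l(e) \le L$ has any periodic point in $\barB_E(e)$ at all (compare \cref{rem:periodicity-regulation-is-stronger-than-closing}, where such a tower exists by design). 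Since the whole point of the proposition is to create, from scratch, a decomposition in which every short tower captures a periodic orbit of matching least period, assuming a periodicity-regulated model is assuming the conclusion. Relatedly, ``shrink the refined base further so that the subtower still contains this periodic orbit'' cannot work as a repair: shrinking only loses points, and the portions of a short tower that miss every periodic orbit must still be covered by the final decomposition, which forces the genuine rearrangement (absorbing them into towers of height $> L$) that your plan never performs.

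The paper's proof does precisely this rearrangement directly, with no appeal to any covering model: it inducts on the period $p = 1, \dotsc, L$, at each stage locating the compact set $K_p$ of points of least period $p$ not yet covered, surrounding it by thin towers of height exactly $p$ (so condition \cref{prop:periodic} holds by construction), then covering the remaining open complement by towers of height $> L$ built around arbitrary points, and finally restoring disjointness by a second induction using the accompaniment notion — extending a tower's height when its accompaniment of an existing tower is not proper, then cutting the overlap out of the old towers. Your block-cutting idea from \cref{cor:KriegerLemma} is not usable here either, since in the paper that corollary is a \emph{consequence} of \cref{thm:main}. A secondary, fixable point: refining only the base floor $B_E(e)$ by a fine partition does not bound $\diam(f^i(U(j)))$ for $i > 0$, because $f^i$ can expand; one must refine by the pullbacks $\bigvee_{0 \le i \le l(e)} f^{-i}\Ucal$ (the paper instead chooses each base $V(x)$ so that all its iterates up to the top of the tower already have diameter $\le \ep$), and \cref{lem:follow} is used only to make the covering map well defined, not to produce the diameter bound.
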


The above proposition is the main part of the (long) proof.
Note that, if $\ep$ is sufficiently small, then
 for each $j$ ($1 \le j \le r)$ there exist $e_j, e'_j \in \wgE$ such that
 $U(j) \subseteq U(e_j)$ and $f^{h_j}(U_j) \subseteq U(e'_j)$.
We continue our discussion while postponing the proof of \cref{prop:main}
 until the end of this subsection.

\begin{rem}\label{rem:delta-dense}
Let $(X,f)$ be an invertible zero-dimensional system.
Let $\Lambda$ be a finite set,
 $\seb B_{\lambda} \mid \lambda \in \Lambda \sen$ be mutually disjoint
 closed and open sets of $X$, and $h_{\lambda}$ ($\lambda \in \Lambda$) be
 positive integers.
Suppose that there exists a disjoint decomposition 
 $X = \bigcup_{\lambda \in \Lambda}
 \bigcup_{i = 0}^{h_{\lambda}-1}f^i(B_{\lambda})$.
Let $x \in \bigcup_{\lambda \in \Lambda}B_{\lambda}$ and $L$ be a positive
 integer such that $x, f(x), f^2(x), \dotsc, f^{L-1}(x)$ are mutually
 distinct and $f^L(x) \in \bigcup_{\lambda \in \Lambda}B_{\lambda}$.
Then, we take a small closed and open neighborhood $U_x \ni x$
 such that for each $0 \le i \le L$, $f^i(U_x)$ are in some $f^j(B_{\lambda})$
 ($\lambda \in \Lambda, 0 \le j < h_{\lambda}$).
By removing all the $f^i(U_x)$ ($0 \le i < L)$ from the original decomposition
 and adding $\bigcup_{i = 0}^{L-1}f^i(U_x)$,
 we obtain a new decomposition.
Suppose that $x \in X$ is positively transitive.
Then,
 it is easy to see that $X$ does not have isolated points
 and $f^n(x)$ is positively transitive for every $n \bi$.
Take an arbitrarily small $\delta > 0$.
We assume that $x \in \bigcup_{\lambda \in \Lambda}B_{\lambda}$ and
 $\seb f^i(x) \mid 0 \le i < L \sen$ is $\delta$-dense in $X$.
Then, by the above construction, we have a $\delta$-dense tower
 $\bigcup_{i=0}^{L-1}f^i(U_x)$.
Thus, in \cref{thm:main}, for an arbitrary sequence $\delta_n > 0$
 ($n \bpi$),
 we may assume that the decomposition by towers caused by $\wgG_n$ or
 $\fgG_n$ contains a tower
 that is $\delta_n$-dense in $X$.
\end{rem}

\begin{nota}
For closed sets $K,K' \subset X$, we denote
 $\dist(K,K') := \min \seb d(x,x') \mid x \in K, x' \in K'\sen$.
\end{nota}

\begin{lem}\label{lem:refine}
Let $X$ be a compact zero-dimensional metrizable space with metric $d$.
Let $\Ucal$ be a finite partition of $X$ by closed and open sets.
Then, there exists some $\ep > 0$ such that,
 for every set $\Acal_{\ep}$ of closed sets of $X$ such that
 $\diam(A) \le \ep$ for all $A \in \Acal_{\ep}$,
 if $\dist(A,A') \le \ep$ $(A,A' \in \Acal_{\ep})$,
 then there exists a $U \in \Ucal$ such that
 $A \cup A' \subset U$.
\end{lem}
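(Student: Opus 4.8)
The plan is to run a Lebesgue-number style separation argument based on the fact that the finitely many members of $\Ucal$ are pairwise disjoint compact sets. Since each $U \in \Ucal$ is closed and open in the compact space $X$, it is compact, and distinct members of $\Ucal$ are disjoint, hence at strictly positive distance. So first I would dispose of the trivial case $\abs{\Ucal} = 1$ (then $X \in \Ucal$ and any $\ep > 0$ works, since $A \cup A' \subseteq X$), and otherwise set
\[
\delta := \min \seb \dist(U,U') \relmiddle{|} U, U' \in \Ucal,\ U \ne U' \sen,
\]
which is a minimum of finitely many strictly positive numbers, so $\delta > 0$. Then I would take $\ep := \delta/4$.

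Next I would verify this choice. Given a family $\Acal_{\ep}$ of closed sets of diameter $\le \ep$ and $A, A' \in \Acal_{\ep}$ with $\dist(A,A') \le \ep$: by the convention that the distance between a set and $\kuu$ is $+\infty$, the hypothesis $\dist(A,A') \le \ep$ forces $A, A' \nekuu$, so I may pick a point $x \in A$. The triangle inequality gives
\[
\diam(A \cup A') \le \diam(A) + \dist(A,A') + \diam(A') \le 3\ep = \tfrac{3}{4}\delta < \delta.
\]
Let $U$ be the unique member of $\Ucal$ that contains $x$ (unique because $\Ucal$ is a partition). If some $y \in A \cup A'$ belonged to a different $U' \in \Ucal$, then $d(x,y) \ge \dist(U,U') \ge \delta > \diam(A \cup A')$, contradicting $x, y \in A \cup A'$. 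Hence $A \cup A' \subseteq U$, which is exactly the claim.

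There is no genuine obstacle here; the argument is elementary. The only care needed is in the bookkeeping of the degenerate cases — $\Ucal$ a singleton, and $A$ or $A'$ empty — and in choosing the constant so that $\diam(A) + \dist(A,A') + \diam(A')$ is strictly below $\delta$; the factor $1/4$ (any factor $< 1/3$ would do) is what makes the final inequality $3\ep < \delta$ hold.
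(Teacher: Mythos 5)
Your proof is correct; the paper in fact omits its own proof of this lemma, and your argument — taking $\ep$ to be a fixed fraction of the minimum distance $\delta$ between the finitely many pairwise disjoint compact members of $\Ucal$, then bounding $\diam(A \cup A') \le \diam(A) + \dist(A,A') + \diam(A') \le 3\ep < \delta$ — is exactly the standard argument the statement is meant to have. The edge cases you flag ($\abs{\Ucal}=1$, empty $A$ or $A'$) are handled sensibly, and the key inequality is justified since $A,A'$ are compact, so $\dist(A,A')$ is attained; nothing further is needed.
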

\begin{proof}
We omit the proof as it is trivial. 
\end{proof}

Here, we present the proof of \cref{thm:main}
 under the assumption that \cref{prop:main} holds.

\vspace{2mm}

\noindent \textit{Proof of \cref{thm:main}.}
By \cref{rem:principle}, it is sufficient to consider the case of weighted 
 graph covering models.
Let $\wgG_0$ be the singleton graph with $\wgV := \seb v_0 \sen$,
 $\wgE := \seb e_0 \sen$, and $l(e) := 1$.
Then, it is obvious that we can obtain the unique base map
 $B_0 : \wgG_0 \to (X,f)$.
Let $\bl : l_1 < l_2 < \dotsb$ be a sequence of positive integers.
Suppose that we have constructed weighted covering maps
 $\wgG_0 \getsby{\fai_1} \wgG_1 \getsby{\fai_2} \wgG_2 \getsby{\fai_3}
 \dotsb \getsby{\fai_n} \wgG_n$ and base maps 
 $B_m : \wgG_m \to (X,f)$ for all $0 \le m \le n$ that are compatible
 such that, for each $0 \le m \le n$ and $e \in \wgE_m$ with $l(e) \le l_m$,
 there exists a periodic point $p \in U(e)$ with least period $l(e)$.

We write $\wgG = \wgG_n$, $\wgV = \wgV_n$, $\wgE = \wgE_n$, and
 $B = B_n$.
Then, we have mutually disjoint towers $\barU(e)$
 $(e \in \wgE)$ such that 
 $X = \bigcup_{e \in \wgE}\barU(e)$.
We have selected an arbitrary $l_{n+1} > l_n$.
Let $L = l_{n+1}$ and apply \cref{prop:main} with some $\ep = \ep_{n+1} > 0$
 that satisfies the condition of \cref{lem:follow}
 for $\wgG$ and $B$.
Then, we have a finite set $\seb U(j) \mid 1 \le j \le r\sen$ of closed and
 open sets, integers $0 < h_j$ $(1 \le j \le r)$, and
 $v_j,v'_j \in \wgV$ $(1 \le j \le r)$ that satisfies  
 \crefrange{prop:tower}{prop:periodic} of \cref{prop:main}.
On the set
 $\Acal := \seb U(j) \mid 1 \le j \le r\sen$,
 we construct the equivalence relation
 $A \sim A'$ generated by
 the relation $A \sim A'$ if and only if
 $\dist(A, A) \le \ep$.
Let $\seb K_i \mid 1 \le i \le r'\sen$ be the set of equivalence classes.
For each $1 \le i \le r'$, we define $U(K_i) := \bigcup_{A \in K_i}A$.
Then, we have mutually disjoint closed and open sets
 $U(K_i)$ $(1 \le i \le r')$.
By \cref{lem:refine}, we have
 $\lim_{\ep \to 0}\max_{1 \le i \le r'} \seb \diam(U(K_i))\sen = 0$.
In particular, we can choose $\ep$ such that for each $1 \le i \le r'$,
 there exists some $e \in \wgE_n$ with $U(K_i) \subset U(e)$.
We construct $\wgG_{n+1}$ and the base map $B_{n+1} : \wgG_{n+1} \to (X,f)$
 as follows:
\itemb
\item $\wgV_{n+1} := \seb U(K_i) \mid 1 \le i \le r'\sen$,
\item ${B_{n+1}}_V(U(K_i)) := U(K_i) \subseteq X$ for all $1 \le i \le r'$,
\item $\wgE_{n+1} := \seb U(j) \mid 1 \le j \le r \sen$,
\item ${B_{n+1}}_E(U(j)) := U(j)$ for all $1 \le i \le r$,
\item if $e = U(j) \in \wgE_{n+1}$, we define $l(e) := h_j$, and
\item for each $e = U(j) \in \wgE_{n+1}$, $s(e)$ satisfies
 $U(j) \subseteq U(s(e))$
 and $r(e)$ satisfies $f^{l(e)}(U(j)) \subseteq U(r(e))$.
\itemn
The covering map $\fai_{n+1} : \wgG_{n+1} \to \wgG_n$ is defined as follows:
\itemb
\item $\fai_{n+1,V} : \wgV_{n+1} \to \wgV_n$ is defined such that,
 if $v = U(K_i)$, then $\fai_{n+1,V}(v)$ is a unique $v \in \wgV_n$
 such that $U(K_i) \subset U(v)$;
\item because of the choice of $\ep > 0$ in \cref{lem:follow,prop:main},
 the definition of $\fai_{n+1,E}$ is obvious.
 The compatibility of the base maps $B_n, B_{n+1}$ is evident.
\itemn
The \pdirectionalitys condition is derived from the condition that
 for each $1 \le i \le r'$,
 there exists some $e \in \wgE_n$ with $U(K_i) \subset U(e)$.
In a similar way, by the definition of $\ep$,
 we have the \mdirectionalitys condition.
We take $\ep_i$ $(i \bpi)$ such that it is strictly decreasing and satisfies
 $\ep_i \to 0$ as $i \to \infty$.
We obtain a weighted graph covering model $\wgGcal : \covrepa{\wgG}{\fai}$.
From this,
 we have a basic graph covering model $\baiGcal : \covrepa{\baiG}{\baifai}$
 and its inverse limit $\invlim \baiGcal = (Y,g)$.
We wish to show that $(Y,g)$ is topologically conjugate to $(X,f)$.
For each $n \bni$, we have shown that there exists a partition
 $X = \bigcup_{e \in \wgE_n} \barU(e)$ formed by closed and open sets.
Thus, for each $x \in X$ and each $n \bni$,
 there exists a unique $e \in \wgE_n$ and a unique $0 \le i < l(e)$
 such that $x \in f^i(U(e))$.
This determines a unique vertex $\baiv_n(x) = \baiv_{e,i} \in \baiV_n$ for each
 $x \in X$ and  each $n \bni$.
Evidently,
 the sequence $\psi(x) := (\baiv_0(x), \baiv_1(x), \baiv_2(x), \dotsc)$
 satisfies $\baifai_{n+1}(\baiv_{n+1}(x)) = \baiv_n(x)$ for all $n \bni$.
Thus, $\psi(x) \in Y$.
We have defined a map $\psi : X \to Y$;
obviously, $\psi$ is a continuous surjective mapping.
The injectivity is also obvious.
Finally, it is obvious that $(\baiv_n(x),\baiv_n(f(x))) \in \baiE_n$
 for all $n \bni$, which implies the commutativity.
Thus, a weighted graph covering model with $\bl$-periodicity-regulation
 has been constructed.
By \cref{lem:periodicity-regulated-implies-basic-set-wg},
 $\wgV_{\infty}$ is a basic set.
\qed

\vspace{5mm} 

At the end of this subsection, we present the proof of \cref{prop:main}.

\vspace{2mm} 

\noindent \textit{Proof of \cref{prop:main}.}
For the base map $B : (\wgV,\wgE) \to (X,f)$,
 we denote closed and open sets as $U(v) := B_V(v)$ ($v \in \wgV$).
Let $U := \bigcup_{v \in \wgV}U(v)$.
Then, this is also closed and open.
For each positive integer $p$, we define 
\[K(p) := \seb x \in X \mid
 x \text{ is a periodic point with the least period } p \sen.\]
Note that each periodic orbit crosses $U$ at least once.
We define the closed sets $K_1,K_2,\dotsc,K_L$,
 non-negative integers $i(1),i(2), \dotsc i(L)$, 
 and closed and open sets $W_{p,i} \subseteq U$
 ($1 \le p \le L, 1 \le i \le i(p)$)
 that satisfy the following conditions.
We state the convention that $K_0 \iskuu$, $i(0) = 1$, and $W_{0,1} \iskuu$.
\itemb
\item $K_p \subseteq K(p)$ for all $1 \le p \le L$,
\item each $\barW_{p,i} := \bigcup_{0 \le j < p}f^j(W_{p,i})$
 is a tower,
\item $\diam (f^j(W_{p,i})) \le \ep$ for all $1 \le p \le L$,
 $1 \le i \le i(p)$, $1 \le j \le p$,
\item $K_p \subseteq \bigcup_{1 \le i \le i(p)}\barW_{p,i}$
  for all $1 \le p \le L$,
\item  $\bigcup_{1 \le p \le p'+1}K(p)
 \subseteq \left(\bigcup_{0 \le p \le p', 1 \le i \le i(p)}
 \barW_{p,i} \right) \bigcup K_{p'+1}$ for all $0 \le p' < L$,
\item the towers $\barW_{p,i}$ ($1 \le p \le L, 1 \le i \le i(p)$)
 are mutually disjoint.
\itemn
These sequences are inductively obtained as follows.
Let $K_1 := \seb x \in X \mid f(x) = x \sen$.
Then, we have $K_1 \subseteq U$.
We decompose $K_1$ into mutually disjoint closed sets
 $K_{1,i}$ ($1 \le i \le i(1))$
 such that $\diam (K_{1,i}) \le \ep/2$ for all ($1 \le i \le i(1))$.
We can take mutually disjoint closed and open sets
 $K_{1,i} \subseteq W_{1,i} \subseteq U$ ($1 \le i \le i(1))$
 such that, for all $1 \le i \le i(1)$,
 it follows that $\diam (W_{1,i}) \le \ep$ and $\diam (f(W_{1,i})) \le \ep$.
Thus, the sequences are obtained for $L = 1$.

Suppose that the sequences are obtained for $L = l$.
Therefore, we have obtained the sequences $i(p)$ ($1 \le p \le l$) and 
 $W_{p,i}$ ($1 \le p \le l$, $1 \le i \le i(p)$) that satisfy
 the above conditions.
In particular, if we define
 $W := \bigcup_{1 \le p \le l, 1 \le i \le i(p)} \barW_{p,i}$,
 then $W$ contains all the periodic orbits with least period $\le l$.
We reorder $W_{p,i}$ ($1 \le p \le l$, $1 \le i \le i(p)$) as
 $V_j\ (1 \le j \le t)$.
We define $M := \seb x \in X \mid f^{l+1}(x) =x \sen \setminus W$.
It is obvious that $M$ is a closed set consisting of periodic points
 with least period $l+1$.
We define $M' := M \cap U$ as a closed set.
For each $x \in M'$, let
 $0 < i_1 < i_2 < \dotsb < i_{n(x)} \le l$
 be the maximal sequence such that
 $f^{i_k}(x)$ enters some
 $V_j$ $(1 \le j \le t)$.
For each $k$ with $1 \le k \le n(x)$,
 let $j_k$ $(1 \le j_k \le t)$
 be such that $f^{i_k}(x) \in V(j_k)$.
We construct a string $s(x) := ((i_k,j_k))_{1 \le k \le n(k)}$.
If $f^i(x) \notin V(j)$ for all $1 \le  i \le t$,
 then $s(x) := \kuu$.
We define a set $S := \seb s(x) \mid x \in M' \sen$.
Let $M(s) := \seb x \in M' \mid s(x) = s \sen$
 for each $s \in S$.
Then, we have a disjoint union
 $M' = \bigcup \seb M(s) \mid s \in S \sen$
 of closed and open subsets of $M'$.
We decompose each $M(s)$ as $M(s) = \bigcup_{1 \le i \le i(s)} M(s,i)$
 by mutually disjoint closed and open subsets of $M'$
 such that $\diam (f^j(M(s,i))) \le \ep/2$
 ($s \in S, 1 \le i \le i(s), 0 \le j \le l$).
We reorder $M(s,i)$ ($s \in S, 1 \le i \le i(s)$)
 as $M_j$ ($1 \le j \le J$) that are mutually disjoint closed and open
 subsets of $M'$,
 and define $M'_1 := M_1$ and, for each $1 < j \le J$,
 $M'_j := M_j \setminus \bigcup_{1 \le j' < j, 0 \le k \le l} f^k(M_{j'})$.
Then, all the $f^k(M'_j)$ ($1 \le j \le J, 0 \le k \le l$) are mutually
 disjoint.
We remove any empty sets $M'_j$ if necessary.
We then have closed and open sets $W_{l+1,j}$ ($1 \le j \le J$)
 such that $M'_j \subseteq W_{l+1,j}$ ($1 \le j \le J$) and 
  the towers $\barW_{l+1,j}$ ($1 \le j \le J$) are mutually disjoint.
It is evident that we can obtain $W_{l+1,j}$ ($1 \le j \le J$) small enough
 such that $\diam (f^k(W_{l+1,j})) \le \ep$
 for all $0 \le j \le J$ and $0 \le k \le l+1$. 
It is also evident that there is some $W_{l+1,j}$ ($0 \le j \le J$)
 small enough such that
 the towers $\barW_{l+1,j}$ ($1 \le j \le J$) accompany
 the towers $\barW(p,i)$ $(1 \le p \le l, 1 \le i \le i(p)$).
Let $W' = \bigcup_{1 \le i \le i(l+1)}\barW_{l+1,i}$.
We must renew the sets
 $W_{p,i}$ ($1 \le p \le l, 1 \le i \le i(p)$) by taking
 $W_{p,i} \setminus W'$ instead.
Finally, we define $i(l+1) := J$
 and $K_{l+1} := \seb x \in W' \mid f^{l+1}(x) = x \sen$.
This concludes the inductive step.

We reorder the sets $W_{p,i}$ ($1 \le p \le L, 1 \le i \le i(p)$)
 to obtain $V(j,0)$ ($1 \le j \le t_0$).
If $V(j,0) = W_{p,i}$, we define $h(j,0) := p$ for all $1 \le j \le t_0$.
Then, it follows that
 $\barV(j,0) := \bigcup_{0 \le i < h(j,0)}f^i(V(j,0))$
 are mutually disjoint towers and 
 for each $1 \le j \le t_0$,
 $\diam (f^i(V(j,0)) \le \ep$ for $0 \le i \le h(j,0)$.
We take vertices $v(j,0) \in \wgV$ ($1 \le j \le t_0$)
 such that
 $V(j,0) \subseteq U(v(j,0))$ ($1 \le j \le t_0$).
Let $1 \le j \le t_0$.
Because all $x \in V(j,0)$ follow the same walk,
 in particular, we have $f^{h(j,0)}(V(j,0)) \subseteq U(v(j,0))$.
Thus, we can take $v'(j,0) = v(j,0)$ ($1 \le j \le t_0$).
We define a closed and open set
 $X_0 := \bigcup_{1 \le j \le t_0}\barV(j,0)$.
Let $Y := \bigcup_{v \in \wgV}U(v) \setminus X_0$.
Then, $Y$ is a closed and open set.
If $Y = \kuu$, then $X = X_0$, and the proof is complete.
Therefore, we assume that $Y \nekuu$.

Let $x \in Y$.
Suppose that $x$ is a periodic point with least period $h(x)$.
Then, $h(x) > L$.
Take a closed and open set $Y \supseteq V(x) \ni x$ such that
 $V(x) \subseteq U(v)$ for some $v \in \wgV$,
 $\diam (f^i(V(x))) \le \ep$ for all $0 \le i \le h(x)$,
 $f^{h(x)}(V(x)) \subseteq U(v)$,
 and $f^i(V(x))$  $(0 \le i < h(x))$ are mutually disjoint.
Next, suppose that $x$ is not periodic.
Take an arbitrarily large integer $h(x) > L$  
 such that $f^{h(x)}(x) \in U(v')$
 for some $v' \in \wgV$.
We take a closed and open set $Y \supseteq V(x) \ni x$ such that
 $V(x) \subseteq U(v)$ for some $v \in \wgV$,
 $\diam (f^i(V(x))) \le \ep$ for all $0 \le i \le h(x)$,
 $f^{h(x)}(V(x)) \subseteq U(v')$,
 and all $f^i(V(x))$ $(0 \le i < h(x))$ are mutually disjoint.
Take a finite set $\seb x_j \mid 1 \le j \le n\sen$ such that
 $\bigcup_{1 \le j \le n}V(x_j) = Y$.
Then, it is easy to see that
 $\bigcup_{1 \le j \le n}\barV(j) \cup X_0 = X$.
We write $h(j) := h(x_j) > 1$ for each $1 \le j \le n$.
We define $V(1) := V(x_1)$.
When $V(j)$ is defined for $1 \le j < n$, we define
 $\barV(j) := \bigcup_{0 \le i < h(j)}f^i(V(j))$
 and
\[V(j+1)
 := V(x_{j+1})
 \setminus \bigcup_{1 \le j' \le j}\barV(j').\]
Finally, when $V(n)$ is defined,
 we define $\barV(n) := \bigcup_{0 \le i < h(n)}f^i(V(n))$.
Thus, we have $\bigcup_{1 \le j \le n} \barV(j) \supseteq Y$.
Note that $\barV(j)$ may not be mutually disjoint.
Moreover,
 each $\barV(j)$ $(1 \le j \le n)$
 may not be disjoint with respect to some $V(j',0)$ $(1 \le j' \le t_0)$.

We begin another inductive procedure for $\alpha = 0, 1,2,\dotsc,n$.
Suppose that, for $0 \le \alpha < n$, there exist $t_{\alpha} \ge 0$,
  bases $V(j,\alpha)$ $(1 \le j \le t_{\alpha})$,
  heights $h(j,\alpha)$ $(1 \le j \le t_{\alpha})$, and
  vertices $v(j,\alpha), v'(j,\alpha) \in \wgV$ $(1 \le j \le t_{\alpha})$
 such that
\begin{deepenum}
\item\label{al:tower} 
 $\barV(j,\alpha) := \bigcup_{0 \le i < h(j,\alpha)}f^i(V(j,\alpha))$
 is a tower for each $1 \le j \le t_{\alpha}$,
\item\label{al:diameter} $\diam (f^i(V(j,\alpha))) \le \ep$
 for all $0 \le i \le h(j,\alpha), 1 \le j \le t_{\alpha}$,
\item\label{al:disjoint} 
 the towers $\barV(j,\alpha)$ $(1 \le j \le t_{\alpha})$
 are mutually disjoint,
\item\label{al:region}
 if we define $X_{\alpha} : = \bigcup_{1 \le j \le t_{\alpha}}\barV(j,\alpha)$,
 then $X_{\alpha} = \bigcup_{0 \le j \le \alpha}\barV(j)$, with the convention
 that $\barV(0) := X_0$,
\item\label{al:vertex}  $V(j,\alpha) \subseteq U(v(j,\alpha))$
 and $f^{h(j,\alpha)}(V(j,\alpha)) \subseteq U(v'(j,\alpha))$ for each
 $1 \le j \le t_{\alpha}$, and
\item\label{al:period}
 if $1 \le \alpha \le n$,
 then $h(j,\alpha) > L$ for all $1 \le j \le t_{\alpha}$.
\end{deepenum}
Note that we have already shown the case in which $\alpha = 0$.
In addition, if the case in which $\alpha = n$ is true,
 then $X_n = \bigcup_{0 \le j \le n}\barV(j) = X$, and the proof is complete.


To proceed with the inductive step, let $\beta = \alpha +1$.
By the inductive hypothesis for \ref{al:region},
 $X_{\alpha} \cap V(\beta) = \kuu$.
For each $x \in V(\beta)$, let
 $0 < i_1 < i_2 < \dotsb < i_{n(x)} < h(\beta)$
 be the maximal sequence such that
 $f^{i_k}(x)$ enters some
 $V(j,\alpha)$ $(1 \le j \le t_{\alpha})$.
For each $k$ with $1 \le k \le n(x)$,
 let $j_k$ $(1 \le j_k \le t_{\alpha})$
 be such that $f^{i_k}(x) \in V(j_k,\alpha)$.
We construct a string $s(x) := ((i_k,j_k))_{1 \le k \le n(k)}$.
If $f^i(x) \notin V(j,\alpha)$ for all $0 < i < h(\beta)$ and for all
 $1 \le j \le t_{\alpha}$, then $s(x) := \kuu$.
We define a set $S := \seb s(x) \mid x \in V(\beta) \sen$.
Let $V(s) := \seb x \in V(\beta) \mid s(x) = s \sen$
 for each $s \in S(\beta)$.
Then, we have a disjoint union
 $V(\beta) = \bigcup \seb V(s) \mid s \in S \sen$
 of closed and open sets.
Moreover,
 $\barV(s) := \bigcup_{0 \le i < h(\beta)}f^i(V(s))$ $(s \in S)$
 are mutually disjoint towers.
It is evident that the towers $\barV(s)$ $(s \in S)$ accompany
 the towers $\barV(j,\alpha)$ $(1 \le j \le t_{\alpha})$.
For each $s \in S$,
 the accompaniment may not be proper, i.e. the following may occur:
\begin{equation}\label{beta:eqn:not-full:l=1}
f^{h(\beta)-1}(V(s)) \subseteq f^i(V(j,\alpha)) \text{ with }
 1 \le j \le t_{\alpha}, 1 \le i < h(j,\alpha) -1.
\end{equation}
Therefore, roughly speaking,
 we extend the height of tower $\barV(s)$ for 
 each $s \in S$.
If \cref{beta:eqn:not-full:l=1} does not occur for $s  \in S$,
 then we need not change
 the height $h(\beta)$ of tower $\barV(s)$, i.e.
 the base is $V(s)$ and the height is $h(s) := h(\beta)$.
If  \cref{beta:eqn:not-full:l=1} occurs for $s \in S$,
 then we do not change the base $V(s)$,
 but we change the height to $h(s) := h(\beta) + h(j,\alpha) - 1 - i$.
Then, $f^{h(s)-1}(V(s)) \subseteq f^{h(j,\alpha)-1}(V(j,\alpha))$ is satisfied.
It follows that $\diam (f^i(V(s))) \le \ep$ for all $0 \le i < h(s)$,
 because the extension is
 part of the tower with base $V(j,\alpha)$.
The disjointness of $f^i(V(s))$ $(0 \le i < h(s))$ is also obvious.
Now, we remove the towers $\barV(s)$ from each $V(j,\alpha)$.
Precisely, for each $1 \le j \le t_{\alpha}$,
 we define
 $V'(j,\alpha) := V(j,\alpha) \setminus \bigcup_{s \in S}\barV(s)$.
Thus, we obtain $\barV'(j,\alpha)
 := \bigcup_{0 \le i < h(j,\alpha)}f^i(V'(j,\alpha))
  = \barV(j,\alpha) \setminus \bigcup_{s \in S}\barV(s)$.
Now, we have mutually disjoint towers
 $\seb \barV'(j,\alpha) \mid 1 \le j \le t_{\alpha} \sen
 \cup \seb \barV(s) \mid s \in S \sen$.
Note that, for all $s \in S$, it follows that $h(s) \ge h(\beta) > L$.
We reorder these towers and rewrite the bases
 $V(j,\beta)$ $(1 \le j \le t_{\beta})$,
  heights $h(j,\beta)$ $(1 \le j \le t_{\beta})$,
 and  towers $\barV(j,\beta)$ $(1 \le j \le t_{\beta})$.
The existence of vertices
 $v(j,\beta), v'(j,\beta) \in \wgV$ $(1 \le j \le t_{\beta})$
 that satisfy $V(j,\beta) \subseteq U(v(j,\beta))$
 and $f^{h(j,\beta)}(V(j,\beta)) \subseteq U(v'(j,\beta))$ for each
 $1 \le j \le t_{\beta}$ is also obvious.
If we define $X_{\beta} : = \bigcup_{1 \le j \le t_{\beta}}\barV(j,\beta)$,
 then $X_{\beta} = X_{\alpha} \cup \barV(\beta)$.
This concludes the inductive step for $\alpha$, thus completing the proof.
\qed

\begin{rem}\label{rem:basic-set-is-contained}
Suppose that there exists a closed and open set $U \subseteq X$ and a positive
 integer $n$ such that $\bigcup_{i = 0}^nf^i(U) = X$.
Then, $U$ intersects with each periodic orbit.
According to the construction of $U(j)$ ($1 \le j \le r$)
 in terms of the proof of \cref{prop:main}, which is applied to the trivial base map, 
 it is easy to see that we can take all of the $U(j)$ as subsets of $U$.
Thus, we have that $\wgV_{\infty} \subseteq U(\wgV_1) \subseteq U$.
\end{rem}

\section{Link to the Bratteli--Vershik models}\label{sec:link}
In this section, we clarify the link to the Bratteli--Vershik models.
The Bratteli--Vershik models for zero-dimensional systems were 
 extended to aperiodic cases in \cite{Medynets_2006CantorAperSysBratDiag}.
In this section, we extend them to the case in which periodic points
 exist.

\subsection{The Bratteli--Vershik models}\label{subsec:Bratteli-Vershik-models}
In this subsection, we introduce the well-known Bratteli diagrams.
$V_n$ $(n \ge 0)$ denotes the set of vertices of 
 a Bratteli diagram and $E_n$ $(1 \le n)$ denotes the set of edges.
These symbols are not used for the descriptions of basic graphs discussed
 in \cref{sec:basic-covering}.
Basic graphs are denoted by inverted hats, e.g. $\baiG$, $\baiV$,
 and $\baiv \in \baiV$.  

\begin{defn}\label{defn:Bratteli-diagram}
A \textit{Bratteli diagram} is an infinite directed graph $(V,E)$,
 where $V$ is the vertex set and $E$ is the edge set.
These sets are partitioned into non-empty disjoint finite sets
$V = V_0 \cup V_1 \cup V_2 \cup \dotsb$ and $E = E_1 \cup E_2 \cup \dotsb$,
 where $V_0 = \seb v_0 \sen$ is a one-point set.
Each $E_n$ is a set of edges from $V_{n-1}$ to $V_n$.
Therefore, there exist two maps $r,s : E \to V$ such that $r:E_n \to V_n$
 and $s : E_n \to V_{n-1}$ for all $n \bpi$,
i.e. the \textit{range map} and the \textit{source map}, respectively.
Moreover, $s^{-1}(v) \nekuu$ for all $v \in V$ and
$r^{-1}(v) \nekuu$ for all $v \in V \setminus V_0$.
We say that $u \in V_{n-1}$ is connected to $v \in V_{n}$ if there
 exists an edge $e \in E_n$ such that $s(e) = u$ and $r(e) = v$.
\end{defn}

\begin{defn}
Let $(V,E)$ be a Bratteli diagram.
For each $0 \le n < m$, a sequence of edges
 $p = (e_{n+1},e_{n+2},\dotsc,e_m) \in \prod_{n < i \le m}E_i$ 
 with $r(e_i) = s(e_{i+1})$ for all $n < i < m$ is called a \textit{path}.
A path $p =  (e_{n+1},e_{n+2},\dotsc,e_m)$ goes from one vertex $v \in V_n$
 to another vertex $v' \in V_m$ if $v = s(e_{n+1})$ and $v' = r(e_m)$.
We write $p(i) := e_i$ for $n < i \le m$.
For each $n < m$, we define
 $E_{n,m} := \seb p \in \prod_{n < i \le m}E_i \mid p \text{ is a path.} \sen.$
For $p = (e_{n+1},e_{n+2},\dotsc,e_m) \in E_{n,m}$,
 the source map $s : E_{n,m} \to E_n$ and the range map $r : E_{n,m} \to E_m$
 are defined by $s(p) = s(e_{n+1})$ and $r(p) = r(e_{m})$.

For each $n \ge 0$, an infinite path $p = (e_{n+1},e_{n+2},\dotsc)$
 is also defined.
For each $n \ge 0$, $E_{n,\infty}$ denotes
 the set of all infinite paths from $V_n$.
For $p = (e_{n+1},e_{n+2},\dotsc) \in E_{n,\infty}$,
 the source map $s : E_{n,\infty} \to V_n$ is defined as $s(p) = s(e_{n+1})$.
For $p = (e_{n+1},e_{n+2},\dotsc) \in E_{n,\infty}$,
 we denote $p(i) := e_i$ for $n < i$.
For $0 \le n \le n' < m' \le m$ and $p \in E_{n,m}$,
 we denote $p[n',m'] := (p(n'+1),p(n'+2),\dotsc,p(m')) \in E_{n',m'}$.
For $0 \le n \le n'$ and $p \in E_{n,\infty}$,
 $p[n',\infty)$ is also defined.

In particular, we have defined the set
 $E_{0,\infty}$.
We consider $E_{0,\infty}$ with the product topology.
Under this topology, it is a compact zero-dimensional space.
\end{defn}

\begin{defn}\label{defn:ordered-Bratteli-diagram}
Let $(V,E)$ be a Bratteli diagram such that
$V = V_0 \cup V_1 \cup V_2 \cup \dotsb$ and $E = E_1 \cup E_2 \cup \dotsb$
 are the partitions,
 where $V_0 = \seb v_0 \sen$ is a one-point set.
Let $r,s : E \to V$ be the range map and the source map, respectively.
We say that $(V,E,\ge)$ is an \textit{ordered Bratteli diagram} if
 a partial order $\ge$ is defined on $E$ such that 
 $e, e' \in E$ are comparable if and only if $r(e) = r(e')$.
 Thus, we have a linear order on each set $r^{-1}(v)$ for each $v \in \Vp$.
The edges $r^{-1}(v)$ are numbered from $1$ to $\abs{r^{-1}(v)}$, and
 the maximal (resp. minimal) edge is denoted
 by $e(v,\max)$ (resp. $e(v,\min)$).
Let $E_{\max}$ and $E_{\min}$ denote the sets of maximal and minimal
 edges, respectively.
\end{defn}

\begin{defn}
Let $(V,E,\ge)$ be an ordered Bratteli diagram.
For each $0 < n < m$ and $v \in V_m$,
the set $\seb p \in E_{n,m} \mid r(p) = v \sen$
 is linearly ordered by the lexicographic order, 
 i.e. for $p \ne q \in E_{n,m}$ with $r(p) = r(q)$,
 $p < q$ if and only if
 $p(k) < q(k)$ with the maximal $k \in [n+1,m]$ such that $p(k) \ne q(k)$.
For each $n \bni$, suppose that
 $p, p' \in E_{n,\infty}$ are distinct cofinal paths,
 i.e. there exists a $k > n$
 such that $p(k) \ne p'(k)$, and for all $l > k$, $p(l) = p'(l)$.
We define the lexicographic order $p < p'$ if and only if $p(k) < p'(k)$.
In particular, we have defined the lexicographic order on $E_{0,\infty}$.
This is a partial order, and $p,q \in E_{0,\infty}$ is comparable if 
 and only if $p$ and $q$ are cofinal.
\end{defn}

Let $(V,E,\ge)$ be an ordered Bratteli diagram.
We define

\begin{fleqn}[20mm]
\begin{align*}
\ & E_{0,\infty,\min}  :=  \seb p \in E_{0,\infty} \mid 
 p(k) \in E_{\min} \myforall k \sen, \myand\\
\ & E_{0,\infty,\max}  :=   \seb p \in E_{0,\infty} \mid 
 p(k) \in E_{\max} \myforall k \sen.
\end{align*} 
\end{fleqn}
\begin{defn}
Let $(V,E,\ge)$ be an ordered Bratteli diagram.
For each $p \in E_{0,\infty} \setminus E_{0,\infty,\max}$,
 there exists the least $p' > p$ with respect to the lexicographic order.
We say that $(V,E,\ge)$ admits a continuous \textit{Vershik map}
 $\psi : E_{0,\infty} \to E_{0,\infty}$ if $\psi$ is continuous everywhere,
 $\psi(E_{0,\infty,\max}) = E_{0,\infty,\min}$, and 
 for each $p \in E_{0,\infty} \setminus E_{0, \infty,\max}$,
 it follows that $\psi(p)$ is the least path such that $p < \psi(p)$.
We note that $\psi$ is surjective and
 if $\abs{\psi^{-1}(x)} \ne 1$, then $x \in E_{0,\infty,\min}$.
\end{defn}

\begin{defn}
Let $(X,f)$ be a zero-dimensional system.
Let $(V,E,\ge)$ be an ordered Bratteli diagram
 with a continuous Vershik map $\psi : E_{0,\infty} \to E_{0,\infty}$.
Then, $(V,E,\ge,\psi)$
 is a \textit{Bratteli--Vershik model} of $(X,f)$ if
 $(X,f)$ is topologically conjugate to $(E_{0,\infty},\psi)$.
We also say that $(V,E,\ge,\psi)$ is a \textit{Bratteli--Vershik model}
 if $\psi$ is continuous and surjective.
\end{defn}

\begin{nota}\label{nota:paths-from-vertices}
Let $(V,E,\ge)$ be an ordered Bratteli diagram, $n > 0$, and $v \in V_n$.
We abbreviate $P(v) := \seb p \in E_{0,n} \mid r(p) = v \sen$.
We define $l(v) := \abs{P(v)}$ and
 write $P(v) = \seb p(v,0) < p(v,1) < \dotsb < p(v,l(v)-1) \sen$.
Let $U(v,i) := \seb x = (e_{x,1}, e_{x,2},\dotsc) \in E_{0,\infty} \mid 
 (e_{x,1}, e_{x,2}, \dotsc, e_{x,n}) = p(v,i) \sen$ for all $0 \le i < l(v)$.
We denote $\barU(v) := \bigcup_{0 \le i < l(v)}U(v,i)$.
Then, for any Vershik map $\psi$, $\barU(v)$
 is a tower, i.e. $\psi(U(v,i)) = U(v,i+1)$ for all $0 \le i < l(v)-1$.
\end{nota}

By referring to
 \cref{defn:infinite-constant-covering,defn:closing-weighted-covering},
 we will define analogies
 in the Bratteli--Vershik models.

\begin{defn}\label{defn:keeping-constant}
Let $(V,E)$ be a Bratteli diagram and $n \bni$.
We say that an infinite path $(e_{n+1},e_{n+2},\dotsc) \in E_{n,\infty}$
 is \textit{constant}
 if $\abs{r^{-1}(r(e_i))} = 1$ for all $i > n$.
A vertex $v \in V_n$ is \textit{infinitely constantly covered by} the path $p$
 if there exists a constant path $p \in E_{n,\infty}$
 with $v = s(p)$.
\end{defn}

\begin{defn}\label{defn:closing-BV}
A Bratteli--Vershik model $(V,E,\ge,\psi)$
 has the \textit{closing property} 
 if, for every constant path $(e_{n+1},e_{n+2}, \dotsc) \in E_{n,\infty}$
 with $n \bni$,
 the set $\bigcap_{m > n}\barU(s(e_m))$ is a periodic orbit
 (of least period $l(s(e_{n+1}))$).
\end{defn}

The theorem that corresponds to \cref{thm:closing-implies-basic-set-wg}
 also holds (see \cref{thm:closing-implies-basic-set-BV}).
The next lemma clarifies the meaning of the above definitions.

\begin{lem}
Let $n \bni$ and $(V,E,\ge,\psi)$ be a Bratteli--Vershik model
 that has the closing property.
Let $v \in V_n$ with some $n \bni$.
If $v$ is constantly covered by some infinite path,
 then $\barU(v)$ contains a periodic orbit of least period $l(v)$.
\end{lem}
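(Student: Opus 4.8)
The plan is that this lemma is essentially a direct unwinding of \cref{defn:keeping-constant} and \cref{defn:closing-BV}; no genuine construction is required. First I would take the hypothesis at face value: since $v \in V_n$ is constantly covered by some infinite path, \cref{defn:keeping-constant} supplies a \emph{constant} path $p = (e_{n+1},e_{n+2},\dotsc) \in E_{n,\infty}$ with $v = s(p) = s(e_{n+1})$; that is, $\abs{r^{-1}(r(e_i))} = 1$ for all $i > n$, and the first edge $e_{n+1}$ emanates from $v$.

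Next I would apply the closing property of $(V,E,\ge,\psi)$ to this particular constant path $p$. By \cref{defn:closing-BV}, the set $\bigcap_{m > n}\barU(s(e_m))$ is a periodic orbit, and its least period equals $l(s(e_{n+1}))$. Since $s(e_{n+1}) = v$, the least period is exactly $l(v)$ (here $l(v) = \abs{P(v)}$ as in \cref{nota:paths-from-vertices}, which is well defined for $v \in V_n$). Finally I would observe that the term of the intersection indexed by $m = n+1$ is precisely $\barU(s(e_{n+1})) = \barU(v)$, so $\bigcap_{m > n}\barU(s(e_m)) \subseteq \barU(v)$. Therefore $\barU(v)$ contains the periodic orbit $\bigcap_{m > n}\barU(s(e_m))$, whose least period is $l(v)$, which is exactly the assertion.

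I do not expect any real obstacle here: the whole argument is a matter of chasing the relevant definitions. The single point deserving a moment's care is the period bookkeeping — one must notice that the quantity $l(s(e_{n+1}))$ appearing in \cref{defn:closing-BV} coincides with $l(v)$ because $v$ is the source of the first edge of the chosen covering path. (One could, if desired, also note that the towers $\barU(s(e_m))$ are nested as $m$ grows, a consequence of \cref{nota:paths-from-vertices} together with $\abs{r^{-1}(r(e_m))} = 1$, but this monotonicity is not needed for the containment used above.)
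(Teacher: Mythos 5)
Your proposal is correct and follows essentially the same route as the paper's own (very short) proof: apply the closing property directly to the constant path covering $v$, so that $\bigcap_{m > n}\barU(s(e_m))$ is a periodic orbit of least period $l(s(e_{n+1})) = l(v)$, which sits inside the $m = n+1$ term $\barU(v)$. Your extra bookkeeping about $s(e_{n+1}) = v$ and the containment is exactly the point the paper leaves implicit, so there is nothing to add.
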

\begin{proof}
Let $(e_{n+1},e_{n+2},\dotsc) \in E_{n,\infty}$ be an infinite path
 that covers $v$.
Then, by the closing property,
 the set $\bigcap_{m \ge n} \barU(s(e_m))$ is a periodic orbit.
\end{proof}

The periodicity-regulation is also obtained for the Bratteli--Vershik
 models.

\begin{defn}\label{defn:periodicity-regulated-BV}
Let $\bl : l_1 < l_2 < \dotsb$ be a sequence of positive integers.
We say that a Bratteli--Vershik model $(V,E,\ge,\psi)$
 is $\bl$-\textit{periodicity-regulated}
 if, for every $n > 0$ and every $v \in V_n$ with $l(v) \le l_n$,
 $\barU(v)$ has a periodic orbit of least period $l(v)$.
\end{defn}

\begin{lem}
Let $\bl : l_1 < l_2 < \dotsb$ be a sequence of positive integers.
If a Bratteli--Vershik model $(V,E,\ge,\psi)$
 is $\bl$-periodicity-regulated,
 then it has the closing property.
\end{lem}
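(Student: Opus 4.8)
The plan is to unwind the nested tower structure attached to the given constant path and then use $\bl$-periodicity-regulation to close it up. Fix a constant path $(e_{n+1},e_{n+2},\dotsc)\in E_{n,\infty}$ and put $u_k:=s(e_{n+k})\in V_{n+k-1}$ for $k\bpi$, so that $u_1=s(e_{n+1})$ and $u_{k+1}=r(e_{n+k})$. Since the path is constant, $\abs{r^{-1}(r(e_{n+k}))}=1$ for every $k\bpi$, so $e_{n+k}$ is the unique edge into $u_{k+1}$; appending $e_{n+k}$ thus gives an order-preserving bijection from the set $P(u_k)$ of paths ending at $u_k$ onto $P(u_{k+1})$. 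Consequently $l(u_k)=l(u_1)=:h$ for every $k$, $p(u_{k+1},i)=(p(u_k,i),e_{n+k})$, and hence $U(u_{k+1},i)\subseteq U(u_k,i)$ for each $0\le i<h$; that is, the towers $\barU(u_k)$ are nested floor by floor. Therefore
\[
A:=\bigcap_{m>n}\barU(s(e_m))=\bigcap_{k\bpi}\barU(u_k)=\bigcup_{i=0}^{h-1}A_i,\qquad A_i:=\bigcap_{k\bpi}U(u_k,i),
\]
the last union being disjoint. Unravelling the cylinder sets, $A_i$ is the unique infinite path whose first $n$ edges form $p(u_1,i)$ and which continues by $e_{n+1},e_{n+2},\dotsc$; write $a_i$ for it. Because $\psi(U(u_k,i))=U(u_k,i+1)$ for $0\le i<h-1$ by \cref{nota:paths-from-vertices}, we get $\psi(a_i)=a_{i+1}$, i.e.\ $a_i=\psi^i(a_0)$ and $A=\seb\psi^i(a_0)\mid 0\le i<h\sen$.

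The statement thus reduces to proving $\psi^h(a_0)=a_0$: then $A$ is the finite $\psi$-invariant orbit of $a_0$, and since the $h$ points $a_i$ sit in the pairwise disjoint sets $A_i$ its least period is exactly $h=l(s(e_{n+1}))$, which is the assertion of \cref{defn:closing-BV}. To get $\psi^h(a_0)=a_0$ I would invoke $\bl$-periodicity-regulation. As $\bl$ is strictly increasing, $l_m\to\infty$; choose $K$ with $l_{n+k-1}\ge h=l(u_k)$ for all $k\ge K$. Then \cref{defn:periodicity-regulated-BV} applied to $u_k\in V_{n+k-1}$ supplies, for each $k\ge K$, a periodic orbit $P_k\subseteq\barU(u_k)$ of least period $h$ (the case $h=1$ is trivial, since then $A=\seb a_0\sen$ and $P_k$ is a fixed point, so assume $h\ge 2$).

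Next I would check that $P_k$ meets each floor $U(u_k,i)$ in exactly one point. Indeed $\psi(U(u_k,i))=U(u_k,i+1)$ for $0\le i<h-1$, and no path in $U(u_k,i)$ with $i\ge 1$ is minimal, since its initial segment $p(u_k,i)$, being of rank $i\ge 1$, differs from the all-minimal path $p(u_k,0)$ to $u_k$; by the near-injectivity of the Vershik map ($\abs{\psi^{-1}(x)}\ne 1$ only for $x\in E_{0,\infty,\min}$) it follows that $\psi^{-1}(U(u_k,i+1))=U(u_k,i)$ for $0\le i<h-1$. Since $\psi$ restricts to a bijection of the $h$-element set $P_k$, this forces $\abs{P_k\cap U(u_k,i)}$ to be independent of $i$, hence equal to $1$. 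Let $q_k$ be the unique point of $P_k\cap U(u_k,0)$; then $\psi^h(q_k)=q_k$, and $q_k\in U(u_{k'},0)$ for all $1\le k'\le k$ by the floor-by-floor nesting.

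Finally I would pass to the limit. The sets $U(u_k,0)$ decrease to the single point $A_0=\seb a_0\sen$, so $\diam U(u_k,0)\to 0$; as $q_k$ and $a_0$ both lie in $U(u_k,0)$, this gives $q_k\to a_0$, and continuity of $\psi$ then yields $\psi^h(a_0)=\lim_k\psi^h(q_k)=\lim_k q_k=a_0$. The main obstacle is exactly this transfer step: the periodic orbits produced by $\bl$-periodicity-regulation live in the finite towers $\barU(u_k)$ and a priori are neither nested nor mutually related, so they cannot be intersected directly; the floor-counting argument — which rests on the near-injectivity of $\psi$ — is what pins each $P_k$ down to a single point $q_k$ of the shrinking base, and the diameter estimate is what identifies $\lim q_k$ with the distinguished point $a_0$.
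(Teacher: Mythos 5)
Your proof is correct and follows essentially the same route as the paper: along the constant path you pass to levels $k$ large enough that $l_{n+k-1}\ge l(u_k)$, invoke the $\bl$-periodicity-regulation at those vertices, and conclude that the nested intersection of the towers $\barU(u_k)$ is a periodic orbit of least period $l(u_1)$. The only difference is that the paper asserts this last implication in a single sentence, whereas you justify it in detail (the order-preserving identification of path sets, the floor-counting via near-injectivity of $\psi$ that pins each periodic orbit to a single base point $q_k$, and the limit $q_k\to a_0$ with continuity of $\psi$), which is a welcome filling-in of the omitted step rather than a different approach.
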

\begin{proof}
Let $v \in V_n$ $(n \bni)$ and a path
 $p_v = (e_{n+1},e_{n+2}, \dotsc) \in E_{n,\infty}$ with
 $s(p_v) = v$ be a constant infinite path.
Take some $m > n$ such that $l_m \ge l(v)$.
Let $v_m := s(e_{m+1})$.
It is sufficient to show that $\bigcap_{i > n}\barU(v_i)$ is a periodic orbit.
It follows that $l(v_m) = l(v) \le l_m$.
By the $\bl$-periodicity-regulation property, for each $j \ge m$, $\barU(v_j)$ 
 has a periodic orbit of least period $l(v_j) = l(v)$.
This implies that the set $\bigcap_{i > n}\barU(v_i)$
 is a periodic orbit of least period $l(v)$.
\end{proof}

\subsection{Relation to weighted covering models}\label{subsec:weighted-covering-models}\label{subsec:relation}

Let $(X,f)$ be a general invertible zero-dimensional system
 and $\bl : l_1 < l_2 < \dotsb$ be a sequence of positive integers.
In \S \ref{sec:extended-graph-covering}, we showed that
 there exists some $\bl$-periodicity-regulated
 weighted graph covering model
 $\wgGcal : \covrepa{\wgG}{\fai}$ with
 $\invlim \wgGcal$ being topologically conjugate to $(X,f)$.
The base maps $B_n : \wgG_n \to X$ $(n \bni)$
 have been defined simultaneously.
We are now ready to show that $\wgGcal$ can be transformed to
 a Bratteli--Vershik model of $(X,f)$.
Subsequently, to show the converse,
 the following definitions are necessary.

\begin{nota}\label{nota:base-of-BV}
Let $(V,E,\ge,\psi)$ be a Bratteli--Vershik model with
 the Vershik map $\psi :E_{0,\infty} \to E_{0,\infty}$.
For each $n \bpi$ and $v \in V_n$, let $p(v,\min) \in P(v)$ be the
 minimal path from $v_0$ to $v$ and let $p(v,\max) \in P(v)$ be the
 maximal path from $v_0$ to $v$.
For each $v \in V_n$,
 denote the closed and open set
 $B(v) := \seb p \in E_{0,\infty} \mid p[0,n] = p(v,\min) \sen$ and
 $l(v) := \abs{P(v)}$.
Each $B(v)$ is said to be the \textit{base} of $v \in V_n$ and
 $l(v)$ is the \textit{height} of $v \in V_n$.
We denote $\barB(v) := \bigcup_{i=0}^{l(v)-1}\psi^i(B(v))$
 and obtain a decomposition 
 $E_{0,\infty} = \bigcup_{v \in V_n}\barB(v)$.
\end{nota}

\begin{defn}\label{defn:cluster}
Let $(V,E,\ge,\psi)$ be a Bratteli--Vershik model with
 the Vershik map $\psi :E_{0,\infty} \to E_{0,\infty}$.
For the set $\seb B(v) \mid v \in V_n\sen$ of bases in level $n$,
 define an equivalence relation
 $\simeq$
 generated by $B(v) \simeq B(v')$ if there exists some $v'' \in V_n$ such that
 $B(v) \cap \psi^{l(v'')}(B(v'')) \nekuu$ and,
 at the same time, $B(v') \cap \psi^{l(v'')}(B(v'')) \nekuu$.
Define $\Acal_n := \seb B(v) \mid v \in V_n\sen/\simeq$.
We write $\Acal_n = \seb v_{n,1},\dotsc,v_{n,a(n)} \sen$.
Now, for each $1 \le i \le a(n)$,
 we have a closed and open set $U_{n,i} := \bigcup_{B \in v_{n,i}}B$.
We say that each $U_{n,i}$ ($1 \le i \le a(n)$) is a \textit{cluster of} bases
 of $V_n$.
\end{defn}

\begin{defn}\label{defn:nesting-property}
Let $(V,E,\ge,\psi)$ be a Bratteli--Vershik model with
 the Vershik map $\psi :E_{0,\infty} \to E_{0,\infty}$.
The Bratteli--Vershik model has the
 \textit{nesting property at level} $n$
 if each cluster of $V_{n+1}$ is a subset of a base $B(v)$
 for some $v \in V_n$.
We say that a Bratteli--Vershik model has the \textit{nesting property}
 if it has the nesting property at all levels $n \bni$.
\end{defn}

A Bratteli--Vershik model $(V,E,\ge,\psi)$
 has the nesting property at level $n$
 if and only if,
 for each $v \in V_{n+1}$, there exists some $v' \in V_n$ such that
 $\psi^{l(v)}(B(v)) \subseteq B(v')$.

\begin{nota}
In the proof of the next theorem,
 using the term `edge' for both $\wgG$
 and a Bratteli diagram may result in ambiguity, even if we use symbols such as $\wge \in \wgE$.
Thus, we use the symbol $\wgp$ for the elements of $\wgE$
 and refer to them as `paths' of $\wgG$,
 i.e. a path $\wgp \in \wgE$ of $\wgG$.
However, the term `path' is also used for the paths of a Bratteli diagram.
This convention is applied to the cases in which both the Bratteli--Vershik
 model and weighted (flexible) graph covering model are required in the
 proof.
\end{nota}

\begin{thm}[From weighted covering model to BV model]
\label{thm:main-link-wg-to-BV}
Let $\wgGcal:  \covrepa{\wgG}{\fai}$ be a weighted graph covering model
 and $\invlim \wgGcal = (X,f)$.
Then, we can construct a Bratteli--Vershik model $(V,E,\ge,\psi)$
 with the nesting property
 and a homeomorphism $\phi : E_{0,\infty} \to X$
 such that $f \circ \phi = \phi \circ \psi$ and
  $\phi(E_{0,\infty,\min}) = \wgV_{\infty}$.
If $\wgGcal$ has the closing property,
 then the Bratteli--Vershik model has the closing property.
If $\wgGcal$ is $\bl$-periodicity-regulated,
 then the Bratteli--Vershik model is 
 $\bl$-periodicity-regulated.
The same is true for a flexible graph covering model $\fgGcal$
 \viatheargument.
\end{thm}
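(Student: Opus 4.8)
The plan is to read the Bratteli diagram directly off $\wgGcal$, transport the dynamics across the conjugacy it produces, and obtain the Vershik map as $\phi^{-1}\circ f\circ\phi$.

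First I would fix, for each $n\bni$, a base map $B_n:\wgG_n\to(X,f)$ with $B_n$ compatible with $B_{n-1}$ along $\fai_n$ — these exist, e.g.\ via the natural tower decompositions of \cref{nota:we-can-define-a-natural-base-map} — so that $X=\bigcup_{e\in\wgE_n}\barB_E(e)$ and every $x\in B_E(e)$ follows the walk $\fai_n(e)$; in particular $B_E(e)\subseteq B_E(\fai_n(e)(\text{first}))$. I then set $V_n:=\wgE_n$ (so $V_0$ is a singleton), and for $e\in\wgE_n$ with $\fai_n(e)=e'_1e'_2\dotsb e'_k$ I place $k$ edges into $E_n$, the $j$-th having range $e$ and source $e'_j$ and ranked $j$; edges of $E_n$ are comparable exactly when their ranges agree. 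This is an ordered Bratteli diagram; counting paths gives $l(e)=\abs{P(e)}$, and the order isomorphism $P(e)\to\{0,1,\dotsc,l(e)-1\}$ matches the $i$-th path with the $i$-th floor $f^i(B_E(e))$ of $\barB_E(e)$. The nesting property is now immediate: for $e'\in\wgE_{n+1}$, \pdirectionalitys of $\fai_{n+1}$ forces all edges of $\wgG_{n+1}$ with source $r(e')$ to have a common first image edge $\hat e\in\wgE_n$, so by compatibility all their base floors lie in $B_E(\hat e)$; hence $f^{l(e')}(B_E(e'))\subseteq B_V(r(e'))\subseteq B_E(\hat e)$, which is the formal nesting condition of \cref{defn:nesting-property}.

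Next I would define $\phi:E_{0,\infty}\to X$ by sending an infinite path $p$ to the unique point of $\bigcap_n f^{i_n(p)}\bigl(B_E(r(p[0,n]))\bigr)$, where $i_n(p)$ is the rank of $p[0,n]$ in $P(r(p[0,n]))$. These sets are nested compacta, so the intersection is nonempty; the floor partition at level $n$ refines $\Ucal(\baiG_n)$, whose union over $n$ generates the topology, so its mesh tends to $0$ and the intersection is a singleton; thus $\phi$ is well defined and continuous, injective because distinct paths meet disjoint floors at some level, and surjective because each $x$ sits in a unique floor at each level; so $\phi$ is a homeomorphism. I would then set $\psi:=\phi^{-1}\circ f\circ\phi$, which is automatically continuous and surjective. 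Using that for $p\notin E_{0,\infty,\max}$ the lexicographic successor agrees with $p$ beyond some level $k$ and, at every level $\ge k$, ends at the same vertex with rank raised by one, together with the fact that a decreasing intersection of compacta commutes with $f$, one checks that $\psi$ agrees with the lexicographic-successor map on $E_{0,\infty}\setminus E_{0,\infty,\max}$, hence is the Vershik map. Finally, $\bigcup_{e\in\wgE_n}B_E(e)=U(\wgV_n)$, and $f$ carries $\bigcup_{e\in\wgE_n}f^{l(e)-1}(B_E(e))$ onto $\bigcup_{e\in\wgE_n}f^{l(e)}(B_E(e))=\bigcup_{v\in\wgV_n}B_V(v)=U(\wgV_n)$, whence $\phi(E_{0,\infty,\min})=\bigcap_nU(\wgV_n)=\wgV_{\infty}$ and $\psi(E_{0,\infty,\max})=E_{0,\infty,\min}$. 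So $(V,E,\ge,\psi)$ is a Bratteli--Vershik model of $(X,f)$ with nesting property, conjugating map $\phi$, and $\phi(E_{0,\infty,\min})=\wgV_{\infty}$.

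For the last two assertions, note that under $V_n=\wgE_n$ one has $l(v)=l(e)$ and $\barU(v)=\barB_E(e)$, and a constant infinite path in the Bratteli diagram from level $n$ (in the sense of \cref{defn:keeping-constant}) is exactly an infinite constant covering from $n$ in the sense of \cref{defn:infinite-constant-covering}: $\abs{r^{-1}(v_i)}=1$ says $\fai_i$ sends the corresponding edge to a single edge. Hence, by the computation following \cref{thm:closing-implies-basic-set-wg}, if each covering edge is a circuit then $\bigcap_{m\ge n}\barB_E(e_m)$ is a periodic orbit of least period $l(e_n)$ sitting inside $\barU(v_n)$; this yields \cref{defn:closing-BV} from closing property of $\wgGcal$, and \cref{defn:periodicity-regulated-BV} from $\bl$-periodicity-regulation of $\wgGcal$. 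The flexible-covering statement follows \viatheargument. I expect the only real work to be in the third step: carefully reconciling the combinatorial Vershik successor with the conjugate of $f$, the delicate points being that the rank-to-floor correspondence is consistent across all levels $n$ and the behaviour over $E_{0,\infty,\max}$, where continuity of $\psi$ is ensured precisely by defining it through $f$.
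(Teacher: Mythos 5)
Your proposal is correct, and its skeleton coincides with the paper's: the same ordered Bratteli diagram (vertices $V_n:=\wgE_n$, one edge of rank $j$ for the $j$-th entry of the walk $\fai_{n+1}(e)$), the same conjugacy in substance, and the same transfer of closing and $\bl$-periodicity-regulation via the correspondence between constant paths and infinite constant coverings. Where you genuinely diverge is in how the conjugacy and the Vershik map are verified. The paper builds $\phi$ through the basic covering $\baiGcal$: a finite path of rank $i$ into $e\in\wgE_n$ is sent to the chain vertex $\baiv_{e,i}\in\baiV_n$, and because the vertices $s(e)\in\wgV_n$ are shared among many towers, injectivity there requires a case analysis (non-minimal versus doubly minimal paths) that invokes \pdirectionalitys directly; the paper then defines $\psi$ as the lexicographic successor off $E_{0,\infty,\max}$ and extends it over the maximal paths using $\phi$. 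You instead route everything through the compatible base maps of \cref{nota:we-can-define-a-natural-base-map}, sending a path to the nested intersection of tower floors $f^{i_n(p)}(B_E(r(p[0,n])))$; since distinct finite paths at some level label distinct floors of the mutually disjoint towers, injectivity becomes immediate, and \pdirectionalitys is consumed instead in the well-definedness/compatibility of the base maps and in your explicit verification of the nesting property via $f^{l(e')}(B_E(e'))\subseteq B_V(r(e'))\subseteq B_E(\hat e)$ — a check the paper's proof asserts in the statement but never writes out, so your argument is actually more complete on that point. Defining $\psi:=\phi^{-1}\circ f\circ\phi$ and then identifying it with the lexicographic successor off the maximal paths is logically equivalent to the paper's order of construction. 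The soft spots you flag yourself — the level-to-level consistency of the rank-to-floor correspondence (needed for the nestedness of your floors, for surjectivity, and for the successor identification) and the surjectivity half of $\psi(E_{0,\infty,\max})=E_{0,\infty,\min}$, which needs a small nested-compacta argument — are exactly the points the paper also passes over with "commutativity" and "it is easy to check", so they are not gaps relative to the paper's own level of detail, but they are where the remaining bookkeeping lives.
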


\begin{proof}
Let $\wgGcal:  \covrepa{\wgG}{\fai}$ be a weighted graph covering model
 and $\invlim \wgGcal = (X,f)$.
For each $n \bni$, $V_n \subset V$ will be identified with $\wgE_n$.
Suppose that $n \bni$ and every $\wgp \in \wgE_{n+1} = V_{n+1}$
 is written as $\fai_{n+1}(\wgp) = \wgp_1 \wgp_2 \dotsb \wgp_{k(p)}$ with
 $\wgp_i \in \wgE_n = V_n$ $(1 \le i \le k(p))$.
Then, $E_{n+1} \subset E$ will be identified
 with the set
 $\seb (\wgp,\wgp_i,i) \mid \fai(\wgp) = \wgp_1 \wgp_2 \dotsb \wgp_{k(p)},\ 
 \wgp \in V_{n+1},\ \wgp_i \in V_n\ \ (1 \le i \le k(p)) \sen$.
The order of $(\wgp,\wgp_i,i)$ will be $i$.

As described above, to construct an ordered Bratteli diagram $(V,E,\ge)$,
 for each $n \bni$, let $V_n := \wgE_n$.
Further, let $n \bni$ and $v \in V_{n+1}$.
Then, $v$ is a path $v = \wgp \in \wgE_{n+1}$ of $\wgG_{n+1}$.
Therefore, $\fai(\wgp)$ is a walk in $\wgG_n$.
We write $\fai(\wgp) = \wgp_1 \wgp_2 \dotsb \wgp_{k(p)}$
 with $\wgp_i \in \wgE_{n}$ $(1 \le i \le k)$.
Let $E_{n+1} :=
 \seb (v,\wgp_i,i) \mid \fai(\wgp) = \wgp_1 \wgp_2 \dotsb \wgp_{k(p)},\ 
 v = \wgp \in V_{n+1},\ \wgp_i \in V_n\ \ (1 \le i \le k(p)) \sen$.
We define the source map $s((v,u,i)) := u$ and the range map
 $r((v,u,i)) := v$.
The order of each edge $e = (v,u,i)$ is $i$, i.e. 
 if $e_1 = (v,u_1,i_1)$ and $e_2 = (v,u_2,i_2)$, then $e_1 < e_2$ if and only
 if $i_1 < i_2$.
Thus, we have constructed an ordered Bratteli diagram $(V,E,\ge)$
 from a weighted graph covering model.

Next, we will show that there exists a homeomorphism 
 $\phi : E_{0,\infty} \to X$ that satisfies
 $f \circ \phi = \phi \circ \psi$.
It is easy to check that, for each $n \bni$ and each $\wgp \in \wgE_n = V_n$,
 the value of $l(\wgp)$ coincides
 with both $\wgp \in \wgE_n$ and $\wgp \in V_n$.
In \cref{nota:basic-graph}, for each $n \bpi$ and each $\wgp \in \wgE_n$,
 we have formed the set $\baiV(\wgp)$ of vertices of the basic graph $\baiG_n$:
 $\baiV(\wgp) :=
 \seb \baiv_{\wgp,0} = s(\wgp),
 \baiv_{\wgp,1}, \baiv_{\wgp,2}, \dotsc, \baiv_{\wgp,l(\wgp)-1},
 \baiv_{\wgp,l(\wgp)} = r(\wgp) \sen$.
If we consider $\wgp \in V_n$ ($n \bpi$)
 in the ordered Bratteli diagram $(V,E,\ge)$,
 then in \cref{nota:paths-from-vertices}, we have defined
 $P(\wgp) = \seb (e_1,e_2,\dotsc,e_n) \in E_{0,n} \mid r(e_n) = \wgp \sen$
 with the lexicographic order.
Because $\abs{P(\wgp)} = l(\wgp)$,
 we can write 
 $P(\wgp) =
 \seb \bx_{\wgp,0} < \bx_{\wgp,1} < \bx_{\wgp,2} < \dotsb
 < \bx_{\wgp,{l(p)-1}} \sen$.
We define a map $\phi_{\wgp} : P(\wgp) \to \baiV(\wgp) \subseteq \baiV_n$
 by $\phi_{\wgp}(\bx_{\wgp,i}) := \baiv_{\wgp,i}$ $(0 \le i < l(\wgp))$.
Further, we define a map $\phi_n : \bigcup_{\wgp \in V_n}P(\wgp) \to \baiV_n$
 by $\phi_n|_{P(\wgp)} := \phi_{\wgp}$ ($\wgp \in V_n$).
For an arbitrarily fixed $\bx \in E_{0,n}$,
 let $C(\bx) := \seb x \in E_{0,\infty} \mid 
 x[0,n] = \bx \sen$.
For an arbitrarily fixed $x = (e_1,e_2,\dotsb) \in E_{0,\infty}$,
 we define $\bx_{n} := (e_1,e_2, \dotsc,e_n)$ for all $n \bpi$.
Then, for $0 < n < m$, $C(\bx_{n}) \supseteq C(\bx_{m})$.
We obtain a commutativity condition
 $\baifai_{m,n}(\phi_m(\bx_m)) = \phi_n(\bx_n)$.
Thus, we have a continuous map $\phi : E_{0,\infty} \to X$.
Apparently, this map is surjective.
We have to show that it is injective.
Suppose that the map is not injective.
Then, there exist $x_1 \ne x_2 \in E_{0,\infty}$
 such that $\phi(x_1) = \phi(x_2)$.
We write $x_i := (e_{i,1},e_{i,2},\dotsc)$ ($i = 1,2$).
Further, we write $\bx_{i,n} := (e_{i,1},e_{i,2},\dotsc,e_{i,n})$ ($i = 1,2$).
Suppose that one of $x_i$ ($i = 1,2$) is not minimal.
Then, there exists an $n > 0$ such that one of $\bx_{i,n}$ ($i = 1,2$)
 is not minimal and $\bx_{1,n} \ne \bx_{2,n}$.
In this case, it is easy to see that $\phi(x_1) \ne \phi(x_2)$,
 which is a contradiction.
Suppose that both $x_i$ ($i = 1,2$) are minimal.
In this case, if $r(e_{1,n}) = r(e_{2,n})$ (in the Bratteli diagram)
 for infinitely many $n$,
 then we have $\bx_{1,n} = \bx_{2,n}$ for such $n$.
Thus, $x_1 = x_2$, which is a contradiction.
Therefore, in this case, there exists an $N > 0$ such that
 $r(e_{1,n}) \ne r(e_{2,n})$ (in the Bratteli diagram) for all $n \ge N$.
Let $\wgp_{i,n} := r(e_{i,n}) \in \wgE_n = V_n$
 for $i = 1,2$ and for all $n \ge N$.
Recall that $\wgp_{i,n}$ ($i = 1,2$) are paths of $\wgG_n$.
Therefore, we have $s(\wgp_{i,n}) \in \wgV_n$ for $i = 1,2$ and $n \ge N$.
Because $\phi(x_1) = \phi(x_2)$, it follows that
 $s(\wgp_{1,n}) = s(\wgp_{2,n})$ for all $n \ge N$.
By the \pdirectionalitys of $\wgGcal$, we have $\wgp_{1,n} = \wgp_{2,n}$ for
 all $n \ge N$, which is a contradiction.
Thus, $\phi$ is bijective and it is a homeomorphism.

Next, we check that the Vershik map $\psi$ can be well defined and 
 that $f \circ \phi = \phi \circ \psi$ is satisfied.
We can always uniquely define the Vershik map
 $\psi$ on the set $E_{0,\infty} \setminus E_{0,\infty,\max}$ by
 the lexicographic order.
From the construction, $f \circ \phi = \phi \circ \psi$ is satisfied
 on the set $E_{0,\infty} \setminus E_{0,\infty,\max}$.
Because $\phi : E_{0,\infty} \to X$ is a homeomorphism,
 we can extend the map $\psi$ uniquely
 on all of $E_{0,\infty}$ such that
 $f \circ \phi = \phi \circ \psi$ is satisfied.
It is easy to check that $\phi(E_{0,\infty,\min}) = \wgV_{\infty}$;
 consequently, it is easy to check that
 $\psi (E_{0,\infty,\max}) = E_{0,\infty,\min}$.
We have to check the nesting property.
Let $(e_1,e_2,\dotsc,e_{n+1}) \in E_{n+1}$
 be a maximal path in the ordered Bratteli diagram.
Let $v = r(e_{n+1}) \in V_{n+1}$ in the Bratteli diagram.
Then, $v$ is a path $\wgp \in \wgE_{n+1}$.
We can find a vertex $\wgv \in \wgV_{n+1}$ such that
 $r(\wgp) = \wgv$ in the weighted graph.
Any paths $\wgp_1,\wgp_2 \in \wgE_{n+1}$ that start from $\wgv$ in the
 weighted graph satisfy
 $(\fai_{n+1}(\wgp_1))(\min) = (\fai_{n+1}(\wgp_2))(\min)$.
This implies the nesting property of the Bratteli--Vershik model.

Suppose that the weighted covering model has the closing property.
Then, because the conjugating homeomorphism
 preserves the periodic orbits,
 the closing property of the Bratteli--Vershik model follows.

Finally, suppose that $\wgGcal$ is $\bl$-periodicity-regulated.
Let $n \bni$ and $v \in V_n$ be such that $l(v) \le l_n$.
Then, $v \in V_n = \wgE_n$ is a circuit that is infinitely constantly 
 covered by circuits of $\wgE_m$ ($m \ge n$), i.e.
 a periodic orbit with least period $l(v)$ is included in $\barU(v)$,
 as desired.
Thus, the Bratteli--Vershik model is $\bl$-periodicity-regulated.
The last statement follows via \cref{rem:principle}.
\end{proof}

We now present the proof of \cref{thm:Bratteli-Vershik-for-all}:

\noindent \textit{Proof of \cref{thm:Bratteli-Vershik-for-all}.}
Let $(X,f)$ be a homeomorphic (compact) zero-dimensional system.
Let $\bl : l_1 < l_2 < \dotsb $ be an arbitrary sequence of positive integers.
In \cref{thm:main}, we constructed a weighted graph covering model
 $\wgGcal : \covrepa{\wgG}{\fai}$ that is $\bl$-periodicity-regulated.
Further, the inverse limit $\invlim \wgGcal := \invlim \baiGcal$
 is topologically conjugate to $(X,f)$.
By \cref{thm:main-link-wg-to-BV} above, from $\wgGcal$,
 we could have constructed a Bratteli--Vershik model $(V,E,\ge,\psi)$
 with the nesting property.
Furthermore, because $\wgGcal$ could be constructed to be
 $\bl$-periodicity-regulated,
 the Bratteli--Vershik model is $\bl$-periodicity-regulated.
This completes the proof.
\qed

The nesting property need not be used
 to show \cref{thm:Bratteli-Vershik-for-all}.
However, it is important in the following \cref{thm:main-link-BV-to-wg}.

\begin{lem}\label{lem:telescoping-implies-nesting-property}
Let $(V,E,\ge,\psi)$ be a Bratteli--Vershik model with
 the Vershik map $\psi :E_{0,\infty} \to E_{0,\infty}$.
After sufficient telescoping, we obtain a Bratteli--Vershik
 model with the nesting property.
\end{lem}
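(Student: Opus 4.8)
The plan is to pass to a subsequence of levels by telescoping and to check the nesting property through the ``formal'' reformulation recorded in \cref{defn:nesting-property}: the model has nesting at level $n$ precisely when for every $v\in V_{n+1}$ some $v'\in V_n$ satisfies $\psi^{l(v)}(B(v))\subseteq B(v')$. Telescoping only selects a subsequence of levels: under the canonical identification of path spaces the lexicographic order, the Vershik map $\psi$ on $E_{0,\infty}$, the heights $l(\cdot)$, the bases $B(\cdot)$ and the towers $\barB(\cdot)$ are unchanged, so $(E_{0,\infty},\psi)$ and hence the conjugacy are preserved. Thus it is enough to produce levels $0=n(0)<n(1)<n(2)<\dotsc$ so that, writing $n:=n(i)$ and $m:=n(i+1)$, every $\psi^{l(v)}(B(v))$ with $v\in V_m$ lies in a single base $B(w)$ with $w\in V_n$.

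The key point I would establish first is that each such ``tower top'' is the image of a \emph{shallow cylinder under a single application of $\psi$}, hence has small diameter once $m$ is large. Indeed, by \cref{nota:paths-from-vertices}, $\psi^{l(v)-1}(B(v))$ is the top floor $\seb p\in E_{0,\infty}\mid p[0,m]=p(v,\mathrm{last})\sen$, a cylinder fixed by its first $m$ coordinates, and $\psi^{l(v)}(B(v))=\psi\bigl(\psi^{l(v)-1}(B(v))\bigr)$. Since all edges $p(1),\dotsc,p(m)$ are maximal for $p$ in this top floor, the lexicographic successor resets levels $1,\dotsc,m$ to minimal edges (the carry occurs at a level $>m$), so $\psi$ maps the top floor into $T_m:=\bigcup_{w\in V_m}B(w)$; and $T_m\subseteq T_n$ for $m>n$. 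Now $E_{0,\infty}$ is compact metrizable, the clopen cylinders of depth $m$ generate its topology and refine with $m$, so their maximal diameter $c_m$ tends to $0$; and $\psi$ is uniformly continuous, so $\diam\bigl(\psi^{l(v)}(B(v))\bigr)\le\omega_\psi(c_m)\to 0$ as $m\to\infty$, uniformly in $v\in V_m$, where $\omega_\psi$ is a modulus of continuity of $\psi$.

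I would then build the levels greedily. Given $n:=n(i)$, put $\delta_n:=\min\seb\dist(B(w),B(w'))\mid w\neq w'\in V_n\sen$ (interpreted as $+\infty$ when $\abs{V_n}=1$); this is strictly positive, being a finite minimum of distances between pairwise disjoint nonempty clopen, hence compact, subsets of $E_{0,\infty}$. Choose $m=n(i+1)>n$ large enough that $\diam\bigl(\psi^{l(v)}(B(v))\bigr)<\delta_n$ for all $v\in V_m$. Then each $\psi^{l(v)}(B(v))$ is a nonempty subset of $T_m\subseteq T_n=\bigsqcup_{w\in V_n}B(w)$ of diameter $<\delta_n$; were it to meet two distinct bases $B(w),B(w')$ it would contain two points at distance $<\delta_n\le\dist(B(w),B(w'))$, which is absurd, so it lies in one base $B(w)$, $w\in V_n$. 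By the formal characterization the telescoped model has nesting at level $i$; iterating over $i$ gives a telescoping with nesting at every level.

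The step I expect to be the crux is the diameter estimate: one must notice that $\psi^{l(v)}(B(v))$, notwithstanding the exponent $l(v)$, equals $\psi$ applied \emph{once} to the top floor $\psi^{l(v)-1}(B(v))$, and that this top floor is a cylinder of depth exactly $m$; only then does uniform continuity of $\psi$ (a single map, not an iterate) force the tower tops to contract as we telescope, whereupon the elementary fact that disjoint compact sets are positively separated upgrades ``small diameter'' to ``inside one base.'' The remaining verifications --- that telescoping leaves $\psi$, $B(\cdot)$, $l(\cdot)$ intact, and the harmless degenerate cases $\psi^{l(v)}(B(v))=\kuu$ or $l(v)=1$ --- are routine.
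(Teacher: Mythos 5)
Your proposal is correct and follows essentially the same route as the paper: fix a level $n$, use the positive separation of the mutually disjoint compact bases $B(w)$ ($w\in V_n$) together with uniform continuity of $\psi$ and the shrinking of depth-$m$ cylinders to make every tower top $\psi^{l(v)}(B(v))$ ($v\in V_m$) have diameter below that separation, conclude it sits inside a single base, and then telescope from $n$ to $m$ and iterate over $n$. The only difference is that you spell out explicitly (via the carry argument and $T_m\subseteq T_n$) why each tower top lies in the union of level-$n$ bases, a point the paper's proof leaves implicit.
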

\begin{proof}
This proof requires the continuity of $\psi$.
Fix a metric $d$ on $E_{0,\infty}$.
Note that, after telescoping, we have a canonical isomorphism
 on $(E_{0,\infty},\psi)$ and, thus, the identification of the metric $d$.
Fix $n \bpi$.
Note that the bases $B(v)$ ($v \in V_n$) are mutually disjoint closed sets.
Thus, we can find some $\delta$ such that
 $0 < \delta < \min
 \seb d(x,y) \mid x \in B(v), y \in B(v'), v,v' \in V_n, v \ne v',\sen$.
Let $\ep > 0$ be such that, if $d(x,y) \le \ep$, then $d(f(x),f(y)) \le \delta$.
It is evident that, for sufficiently large $m > n$,
 each $f^i(B(v))$ $(1 \le i < l(v),\ v \in V_m)$ has diameter less than $\ep$.
Thus, by the continuity of $\psi$, for each base $B(v)$ of $v \in V_n$,
 $f^{l(v)}(B(v))$ has diameter less than $\delta$.
It follows that $f^{l(v)}(B(v))$ is contained in some
 base $B(v)$ ($v \in V_n$).
By telescoping from $n$ to $m$, the new Bratteli--Vershik model
 has the nesting property at level $n$.
Because $n \bni$ is arbitrary, we have the desired result.
\end{proof}

\begin{thm}[From BV model to weighted covering model]
\label{thm:main-link-BV-to-wg}
Let $(V,E,\ge,\psi)$ be a Bratteli--Vershik model
 with the nesting property.
There exists a corresponding weighted graph covering model  
 $\wgGcal:  \covrepa{\wgG}{\fai}$ such that, if
 $\invlim \wgGcal = (X,f)$, there exists
 a homeomorphism $\phi : X \to E_{0,\infty}$
 with  $\phi \circ f = \psi \circ \phi$ and
  $\phi(\wgV_{\infty}) = E_{0,\infty,\min}$.
If the Bratteli--Vershik model has the closing property,
 then $\wgGcal$ has the closing property.
If the Bratteli--Vershik model is 
 $\bl$-periodicity-regulated,
 then $\wgGcal$ is $\bl$-periodicity-regulated.
The same is true for a flexible graph covering model $\fgGcal$ \viatheargument.
\end{thm}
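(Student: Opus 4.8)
The plan is to run the construction of \cref{thm:main-link-wg-to-BV} backwards, with the nesting property being exactly what legitimizes this reverse passage. First I would build the weighted graph covering. For each $n \benonne$ set $\wgE_n := V_n$ and $\wgV_n := \Acal_n$, the set of clusters of bases of $V_n$ from \cref{defn:cluster}; for $e = v \in \wgE_n = V_n$ put $l(e) := \abs{P(v)}$, the height of $v$. Declare the source of $e = v$ to be the cluster containing $B(v)$ and the range of $e = v$ to be the unique cluster meeting $\psi^{l(v)}(B(v))$ --- this is well defined because, by the definition of $\simeq$ in \cref{defn:cluster}, all bases meeting a single set $\psi^{l(v')}(B(v'))$ lie in one cluster. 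The base maps $B_n : \wgG_n \to (E_{0,\infty},\psi)$ are the evident ones: a cluster is sent to the corresponding clopen set $U_{n,i}$, and $e = v$ is sent to $B(v)$, so that ${B_n}_E(e) = B(v)$ and $\barU(v)$ is its tower; the base-map axioms then follow from \cref{nota:base-of-BV} and the decomposition $E_{0,\infty} = \bigcup_{v \in V_n}\barU(v)$.

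Next I would define the covering maps $\fai_{n+1} : \wgG_{n+1} \to \wgG_n$. At level $n+1$ the clopen partition into tower floors refines that at level $n$, so for $e = v \in \wgE_{n+1}$ the tower $\barU(v)$ is a disjoint union of floors of level-$n$ towers. Here is where nesting enters: since each cluster of $V_{n+1}$ lies in a single base of $V_n$, the base $B(v)$ lies in some $B(w_0)$ and $\psi^{l(v)}(B(v))$ lies in some $B(w')$, so $\barU(v)$ starts at the bottom, and ends at the bottom, of level-$n$ towers; because distinct level-$n$ floors are disjoint and $\psi$ advances a floor exactly one step inside a level-$n$ tower, $\barU(v)$ must traverse each level-$n$ subtower it meets fully, base to top. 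Reading off the level-$n$ towers in the order visited gives a walk $\fai_{n+1}(v)$ in $\wgG_n$ of total length $l(v)$; equivalently, if $e^1 < e^2 < \dotsb < e^k$ are the edges of $E_{n+1}$ into $v$, then $\fai_{n+1}(v) = s(e^1)s(e^2)\dotsb s(e^k)$, with $(q,v,j) \in E_{n+1}$ corresponding to the $j$-th letter. This $\fai_{n+1}$ preserves lengths, is edge-surjective (every level-$n$ tower is nonempty and is swept out floor by floor by level-$(n+1)$ towers), and is \pdirectional{} (if $s(e) = s(e')$ in $\wgE_{n+1}$ then $B(e),B(e')$ lie in the same cluster of $V_{n+1}$, hence in one base $B(w_0)$, so both walks begin with $w_0$); one also checks compatibility of $B_n$ with $B_{n+1}$. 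This yields a weighted covering $\wgGcal : \covrepa{\wgG}{\fai}$, not claimed \bidirectional.

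It then remains to identify $\invlim \wgGcal$ with $(E_{0,\infty},\psi)$ and to transfer the normality conditions. Writing $\invlim \wgGcal = \invlim \baiGcal = (X,f)$, a point $x \in X$ determines at each level $n$ the unique floor $f^i({B_n}_E(e))$ containing it, hence (via \cref{nota:basic-graph,nota:base-of-BV}) the $i$-th path in $P(r(e)) \subseteq E_{0,n}$; these segments are coherent and therefore determine a point $\phi(x) \in E_{0,\infty}$. The map $\phi : X \to E_{0,\infty}$ is a continuous bijection: surjective because a coherent sequence of segments gives a decreasing chain of nonempty floors, injective because distinct points of $\invlim \baiGcal$ differ at some $\baiV_n$, i.e.\ have distinct level-$n$ segments. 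On $X \setminus \phi^{-1}(E_{0,\infty,\max})$ one has $\phi \circ f = \psi \circ \phi$ by the very definition of the walks $\fai_{n+1}(v)$, which encode the lexicographic successor, and this extends to all of $X$ by continuity of $f$ and $\psi$; and $\phi(\wgV_{\infty}) = E_{0,\infty,\min}$ since, by \cref{nota:Vinfty}, $x \in \wgV_{\infty}$ iff $x$ lies at the base of a level-$n$ tower for every $n$, i.e.\ iff $\phi(x)$ is minimal. For the transfer: an infinite constant covering $e_m \in \wgE_m$ ($m \ge n$) has all $l(e_m) = l(e_n) =: l$ and walks $\fai_{m,m'}(e_m) = e_{m'}$ of length one, so by nesting the bases ${B_m}_E(e_m)$ nest downward and, with $\seb x \sen := \bigcap_m {B_m}_E(e_m)$, one gets $\bigcap_m \barU(e_m) = \seb x, f(x), \dotsc, f^{l-1}(x) \sen$; the corresponding infinite path in the Bratteli diagram is constant in the sense of \cref{defn:keeping-constant}, so closing property of the model makes this orbit have least period $l$, forcing $f^l(x) = x$, hence $r(e_m) = s(e_m)$ and each $e_m$ a circuit --- this is \cref{defn:closing-weighted-covering}. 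For $\bl$-periodicity-regulation, fix $e = v \in \wgE_n$ with $l(v) \le l_n$; by \cref{defn:periodicity-regulated-BV} there is a periodic orbit $O \subseteq \barU(v)$ of least period $l(v)$ meeting $B(v)$ in one point $q$. Using nesting inductively, the level-$m$ bases containing $q$ nest downward, their heights are non-decreasing, and they all equal $l(v)$ (the orbit $q, f(q), \dotsc, f^{l(v)}(q) = q$ cannot exit a strictly taller tower through an interior floor); since $f^{l(v)}(q) = q$ returns to the base, each such $e_m$ is a circuit, so $e$ is infinitely constantly covered by circuits --- this is \cref{defn:periodicity-regulated-wg}. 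The flexible statement follows \viatheargument, and together with \cref{lem:telescoping-implies-nesting-property} this closes the loop with \cref{thm:main-link-wg-to-BV}.

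The step I expect to be the real obstacle is the verification, in the second paragraph, that $\fai_{n+1}$ is a genuine weighted covering map: one must make fully rigorous the bookkeeping that $\barU(v)$ enters every level-$n$ subtower at its base and leaves at its top, so that ``the walk of towers visited'' is literally a walk of $\wgG_n$ of length $l(v)$ and $\fai_{n+1}$ is \pdirectional. This is precisely where the nesting property of \cref{defn:nesting-property} is indispensable, and it is also the engine behind the conjugacy $\invlim \wgGcal \cong (E_{0,\infty},\psi)$; everything else is routine once this is in place.
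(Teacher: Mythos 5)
Your construction is the same as the paper's: you set $\wgE_n := V_n$ and $\wgV_n := \Acal_n$, define the source and range of an edge $v$ by the clusters containing $B(v)$ and $\psi^{l(v)}(B(v))$, read $\fai_{n+1}(v)$ off the ordered incoming edges, obtain \pdirectionality{} from the nesting property, and build the conjugacy from the floor decompositions. Your transfer arguments for closing property and $\bl$-periodicity-regulation are in fact more detailed than the paper's (which calls them straightforward), and the periodicity-regulation sketch can be completed along the lines you indicate: any level-$m$ tower meeting the period-$l(v)$ orbit must meet every one of its floors, which forces its height to be at most $l(v)$ and its level-$n$ walk to be the single letter $v$, so the towers over the orbit form a constant chain of circuits.

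The one step whose justification does not hold up as written is the commutativity at maximal paths. You establish $\phi \circ f = \psi \circ \phi$ off $\phi^{-1}(E_{0,\infty,\max})$ and then ``extend by continuity''; this requires $E_{0,\infty} \setminus E_{0,\infty,\max}$ to be dense, which can fail: $E_{0,\infty,\max}$ may have nonempty interior, for instance an isolated constant path that is simultaneously maximal and minimal, arising from an isolated periodic orbit --- exactly the kind of system this paper is designed to cover. On such a cylinder the value of the given Vershik map is genuinely extra data, not determined by the lexicographic successor, so no limiting argument from non-maximal points can recover it. The paper instead verifies commutativity at points of $E_{0,\infty,\max}$ directly ``by the definition of the clusters'': because the range of the weighted edge $v$ is the cluster containing $\psi^{l(v)}(B(v))$, for $x$ with $\phi(x)$ maximal the required basic-graph edge from the top floor of the level-$n$ tower of $x$ to the floor containing $\psi(\phi(x))$ exists at every level $n$, and since $f$ is single-valued this forces $f(x) = \phi^{-1}(\psi(\phi(x)))$. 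Your own definition of the range map supplies exactly this, so the repair is immediate, but the continuity shortcut should be replaced by that direct verification.
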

\begin{proof}
Let $(V,E,\ge,\psi)$ be a Bratteli--Vershik model with the nesting property.
For each $n \bni$, $V_n \subset V$ will be identified with $\wgE_n$.
Precisely, for each $v \in V_n$,
 define the closed and open set
 $B(v) := \seb p \in E_{0,\infty} \mid p[0,n] = p(v,\min) \sen$.
Then, we define $\barB(v) := \bigcup_{i=0}^{l(v)-1}\psi^i(B(v))$, and
 obtain the decomposition
 $E_{0,\infty} = \bigcup_{v \in V_n}\barB(v)$.
To construct the set of vertices in the weighted graphs,
 for each $n \bpi$, we define the set $\Acal_n = \seb U_{n,i} \mid
 1 \le i \le a(n) \sen$ of clusters (see \cref{defn:cluster}).
By the nesting property, for each $n \bpi$ and each cluster $U_{n+1,i}$
 ($1 \le i \le a(n+1)$),
 there exists a base $B(v)$ ($v \in V_n)$ that contains the cluster.
We denote $\wgV_n := \Acal_n$ and $\wgE_n = V_n$ for each $n \bpi$.
For each $\wgp = v \in \wgE_n = V_n$,
 the path $\wgp$ starts from the cluster that contains $B(v)$ and ends
 in the cluster that contains $\psi(B(v))$.
$\wgG_0$ is defined as the singleton graph
 such that $\wgE_0$ is identified with $V_0$.
Thus, we obtain the weighted graphs $(\wgV_n,\wgE_n)$ for each $n \bpi$.
The weight map $l : \wgE_n \to \Posint$ is defined as the height 
 of the towers $\barB(v)$ for each $v \in V_n$.
Fix $n \bpi$ and $v \in V_n$
 from the ordered Bratteli diagram.
We write $r^{-1}(v) = \seb e_1 < e_2 < \dotsc < e_k \sen$.
Thus, we obtain $v_1,v_2, \dotsc, v_k \in V_{n-1} = \wgE_{n-1}$.
We define $\fai_n(v) = v_1 v_2 \dotsc v_k$.
This can be considered as the map from $v \in V_n = \wgE_n$
 to the walk $v_1 v_2 \dotsc v_k \in \wgW_{n-1}$.
In this way, we obtain a covering map $\fai_n : \wgG_n \to \wgG_{n-1}$.
By the nesting property in the Bratteli--Vershik model,
 it follows that the covering maps $\fai_n$ are \pdirectional.

Thus, we have constructed a weighted graph covering model
 $\wgGcal : \covrepa{\wgG}{\fai}$.
We write $(X,f) = \invlim \wgGcal$.
We need to define a homeomorphism $\phi : X \to E_{0,\infty}$.
To obtain $(X,f)$, we construct a basic graph covering model
 $\baiGcal : \covrepa{\baiG}{\baifai}$.
Let $n \bni$ and $\wgp \in \wgE_n = V_n$.
Then, we write
 $\baiV(\wgp) := \seb \baiv_{\wgp,0} = s(\wgp),
 \baiv_{\wgp,1}, \baiv_{\wgp,2},
 \dotsc, \baiv_{\wgp,l(\wgp)-1}, \baiv_{\wgp,l(\wgp)} = r(\wgp)
 \sen$.
Because $\wgp$ is identified with some $v \in V_n$ with $l(\wgp) = l(v)$,
 for the case $l(\wgp) \ge 2$,
 we can assign each $\baiv_{\wgp,i}$ ($1 \le i \le l(\wgp) - 1$)
 to $\psi^i(B(v)) \subseteq E_{0,\infty}$.
Including the case in which $l(\wgp) = 1$,
 $\baiv_{\wgp,0}$ is assigned to the cluster $U_{n,j}$
 that satisfies $B(v) \subseteq U_{n,j}$, and $\baiv_{\wgp,l(\wgp)}$
 is assigned to 
 the cluster $U_{n,j'}$ that satisfies $\psi^{l(v)}(B(v)) \subseteq U_{n,j'}$.
Thus, for all $n \bni$, each $\baiv \in \baiV_n$ in the basic graph $\baiG_n$
 is assigned to a closed and open
 set of $E_{0,\infty}$, which we denote by $E_{0,\infty}(\baiv)$.
It is clear that if $\baifai_n(\baiv) =\baiu$ ($n \bpi$),
 then $E_{0,\infty}(\baiv) \subseteq E_{0,\infty}(\baiu)$.
We recall that each $x \in X$ is written
 as $x = (\baiv_0,\baiv_1,\baiv_2,\dotsc)$, 
 with $\baifai_n(\baiv_n) = \baiv_{n-1}$ for all $n \bpi$.
Thus, each $x \in X$ defines
 a closed set
 $E_{0,\infty}(x) := \bigcap_{n \bni}E(\baiv_n) \subseteq E_{0,\infty}$.
By the nesting property, it follows that $E_{0,\infty}(x)$
 consists of a single point.
We define $\phi(x) \in E_{0,\infty}$
 to satisfy $\seb \phi(x) \sen = E_{0,\infty}(x)$.
We have defined a map $\phi : X \to E_{0,\infty}$.
Because $E_{0,\infty} = \bigcup_{\baiv \in \baiV_n}E_{0,\infty}(\baiv)$
 is a disjoint union
 for each $n \bni$, $\phi$ is bijective.
Because each $E_{0,\infty}(\baiv)$ $(\baiv \in \baiV_n,\ n \bni)$ is compact,
 the continuity of $\phi$ follows.
Thus, $\phi$ is a homeomorphism.
It is easy to check the commutativity.
Finally, we need to check $\phi(\wgV_{\infty}) = E_{0,\infty,\min}$.
From the definition, we have $\phi(\wgV_{\infty}) \subseteq E_{0,\infty,\min}$.
Let $x = (\baiv_0,\baiv_1,\baiv_2,\dotsc) \notin \wgV_{\infty}$.
Then, there exists some $n \bpi$ with $\baiv_n \notin \wgV_n$.
Thus, it is easy to see that
 $E_{0,\infty}(\baiv_n) \in E_{0,\infty} \setminus E_{0,\infty,\min}$.

It is straightforward to check that
 the closing property of the Bratteli--Vershik models
 brings about the closing property of weighted covering models.
It is also straightforward to check that
 the $\bl$-periodicity-regulation property of the Bratteli--Vershik models
 brings about
 the $\bl$-periodicity-regulation property in weighted covering models.
This completes the proof.
\end{proof}

We note the construction of the clusters in \cref{thm:main-link-BV-to-wg}.
From the construction in \cref{thm:main-link-wg-to-BV,thm:main-link-BV-to-wg},
 a nested Bratteli--Vershik model
 can be translated to a weighted covering model
 and reversely translated to the original Bratteli--Vershik model.
However, a weighted graph covering model may not be reversely translated
 to the original, i.e. some portion of the elements in $\wgV_n$ $(n \bpi)$
 might be separated into multiple vertices.

\begin{thm}\label{thm:closing-implies-basic-set-BV}
A Bratteli--Vershik model $(V,E,\ge,\psi)$
 has the closing property if and only if
 the set $E_{0,\infty,\min}$ is a basic set.
\end{thm}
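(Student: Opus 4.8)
The idea is to transfer \cref{thm:closing-implies-basic-set-wg} across the equivalence between Bratteli--Vershik models and weighted graph covering models established in \cref{thm:main-link-BV-to-wg,thm:main-link-wg-to-BV}. First I would observe that the statement is invariant under telescoping: telescoping does not change $(E_{0,\infty},\psi)$ up to the canonical identification, does not change the set $E_{0,\infty,\min}$ under that identification, and (by the remarks following \cref{defn:closing-BV}) preserves closing property in both directions. So by \cref{lem:telescoping-implies-nesting-property} I may assume without loss of generality that $(V,E,\ge,\psi)$ has the nesting property.

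For the forward direction, suppose the Bratteli--Vershik model has closing property. By \cref{thm:main-link-BV-to-wg} there is a corresponding weighted graph covering model $\wgGcal : \covrepa{\wgG}{\fai}$ with $\invlim \wgGcal = (X,f)$ and a homeomorphism $\phi : X \to E_{0,\infty}$ with $\phi \circ f = \psi \circ \phi$ and $\phi(\wgV_{\infty}) = E_{0,\infty,\min}$; moreover that theorem tells us $\wgGcal$ inherits closing property. Then \cref{thm:closing-implies-basic-set-wg} gives that $\wgV_{\infty}$ is a basic set of $(X,f)$. Since being a basic set is defined purely in terms of orbits, open neighborhoods, and positive orbits (see \cref{defn:basic-set}), and $\phi$ is a homeomorphism conjugating $f$ to $\psi$, the image $\phi(\wgV_{\infty}) = E_{0,\infty,\min}$ is a basic set of $(E_{0,\infty},\psi)$. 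This is exactly the assertion.

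For the converse, suppose $E_{0,\infty,\min}$ is a basic set. Again using \cref{thm:main-link-BV-to-wg} we obtain $\wgGcal$ and $\phi$ as above with $\phi(\wgV_{\infty}) = E_{0,\infty,\min}$; transporting the basic-set property back through the homeomorphism $\phi^{-1}$ shows $\wgV_{\infty}$ is a basic set of $(X,f)$. By the ``only if'' half of \cref{thm:closing-implies-basic-set-wg}, $\wgGcal$ has closing property. It remains to see that the closing property of $\wgGcal$ forces the closing property of the original Bratteli--Vershik model. For this I would argue directly: given a constant path $(e_{n+1},e_{n+2},\dotsc) \in E_{n,\infty}$ in the Bratteli diagram, under the identification $V_m = \wgE_m$ in \cref{thm:main-link-BV-to-wg} the sequence $s(e_m)$ (or rather the corresponding edges $r(e_m)$) determines an infinite constant covering in $\wgGcal$ in the sense of \cref{defn:infinite-constant-covering}; closing property of $\wgGcal$ says these edges are circuits, and then the paragraph following \cref{thm:closing-implies-basic-set-wg} identifies $\bigcap_m \barU(s(e_m))$ with a periodic orbit of least period equal to the common length, which via $\phi$ is exactly $\bigcap_{m>n}\barU(s(e_m))$ being a periodic orbit in the Bratteli--Vershik model. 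Hence the model has closing property.

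The main obstacle I anticipate is bookkeeping: making the correspondence ``constant path in the Bratteli diagram $\leftrightarrow$ infinite constant covering in $\wgGcal$'' precise, since \cref{defn:keeping-constant} phrases constancy in terms of $\abs{r^{-1}(\cdot)} = 1$ while \cref{defn:infinite-constant-covering} phrases it via $\fai_{m,m'}(e_m) = e_{m'}$, and one must check these match under the identification $V_m = \wgE_m$ and that lengths are preserved. Once that dictionary is checked, the rest is a routine transfer through the conjugating homeomorphism $\phi$, so there is nothing genuinely hard — only the need to keep the two labelling conventions straight.
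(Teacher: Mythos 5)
Your proposal is correct and follows essentially the same route as the paper: reduce to the nesting case by telescoping (\cref{lem:telescoping-implies-nesting-property}), pass to the associated weighted covering via \cref{thm:main-link-BV-to-wg}, apply \cref{thm:closing-implies-basic-set-wg}, and transport the basic-set/closing properties through the conjugating homeomorphism $\phi$. The only deviations are cosmetic: in the converse you transfer closing from $\wgGcal$ back to the Bratteli--Vershik model by matching constant paths with infinite constant coverings directly, whereas the paper round-trips through \cref{thm:main-link-wg-to-BV} to recover the telescoped model; also your appeal to ``remarks following \cref{defn:closing-BV}'' for the telescoping-invariance of closing property points to remarks that do not exist, though the invariance itself is true and is exactly what the paper verifies inline in both directions of its proof.
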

\begin{proof}
Let $(V,E,\ge,\psi)$ be a Bratteli--Vershik model.
Suppose that it has the closing property.
By \cref{lem:telescoping-implies-nesting-property},
 we obtain a Bratteli--Vershik model with
 the nesting property through telescoping.
It is easy to check that the new Bratteli--Vershik model also 
 has the closing property.
Thus, \cref{thm:main-link-BV-to-wg} implies that 
 the corresponding weighted covering model has the closing property.
By \cref{thm:closing-implies-basic-set-wg},
 it follows that $\wgV_{\infty}$ is a basic set.
Again, \cref{thm:main-link-BV-to-wg} implies that 
 $E_{0,\infty,\min}$ is a basic set.

To show the converse, suppose that $E_{0,\infty,\min}$ is a basic set.
By \cref{lem:telescoping-implies-nesting-property},
 we obtain a Bratteli--Vershik model
 $(V',E',\ge,\psi')$ with the nesting property through telescoping.
It is obvious that both $E_{0,\infty}$ and $E_{0,\infty,\min}$ are identically preserved.
Thus, \cref{thm:main-link-BV-to-wg} implies that
 in the corresponding weighted covering model,
 $\wgV_{\infty}$ is a basic set.
By \cref{thm:closing-implies-basic-set-wg},
 it follows that the weighted covering model has the closing property.
By \cref{thm:main-link-wg-to-BV}, we can reconstruct
 a Bratteli--Vershik model with the closing property.
By the proof of \cref{thm:main-link-wg-to-BV,thm:main-link-BV-to-wg},
 we have recovered $(V',E',\ge,\psi')$.
Thus, this Bratteli--Vershik model has the closing property.
Let $(e_{n+1},e_{n+2},\dotsc) \in E_{n,\infty}$ be a constant path
 of $(V,E,\ge,\psi)$.
By telescoping, we obtain a restricted constant path in $(V',E')$.
By the closing property in $(V',E',\ge,\psi')$, we have a periodic point,
 as desired.
Thus, $(V,E,\ge,\psi)$ has the closing property.
\end{proof}

As we have seen in the two theorems above,
 in establishing a formal link between the Bratteli--Vershik models
 and the three types of graph covering models discussed herein,
 flexible graph covering models are natural.
In \cref{sec:substitution}, we present an example
 in which links are established between flexible graph covering models and
 the Bratteli--Vershik models for the stationary case,
 where we define stationary
 flexible covering models and compare them with the well-known stationary
 ordered Bratteli diagrams.
In \cref{subsec:examples}, we examine the possibility of constructing
 examples of stationary flexible covering models
 in relation with the substitution subshifts.
In general, canonically corresponding substitution subshifts
 might not be topologically conjugate to either
 the stationary flexible covering models
 or the related Bratteli--Vershik models.
Nevertheless, there exist a number of cases
 in which there exist canonical
 isomorphisms.
In these discussions,
 we require the two-sided array system introduced in
 \cite{DOWNAROWICZ_2008FiniteRankBratteliVershikDiagAreExpansive}.
Next, in \cref{subsec:array-systems},
 we recall the array system
 approach for the natural extension of flexible covering models, which is not restricted 
 to stationary cases.
Nevertheless, the arguments are presented
 in terms of weighted covering models.

\subsection{Array systems}\label{subsec:array-systems}
\label{sec:array-systems}
In this subsection,
 following \cite{DOWNAROWICZ_2008FiniteRankBratteliVershikDiagAreExpansive},
 we present the notion of the \textit{array systems} for a brief study
 of the stationary flexible covering models
 described in \cref{sec:substitution}.
First, we translate flexible covering models into the corresponding
 weighted covering models.
Let $\wgGcal : \covrepa{\wgG}{\fai}$ be a weighted covering model.
To construct the inverse limit,
 recall that we translated the weighted covering model into the basic covering model:
 $\baiGcal : \covrepa{\baiG}{\fai}$.
We denoted $\invlim \wgGcal := \invlim \baiGcal$.
Here, we write $\invlim \baiGcal = (X,f)$.
Recall that $f : X \to X$ is continuous and surjective.
The \textit{natural extension} of $(X,f)$
 denotes a zero-dimensional system $(\hX,\hf)$ with
 $\hX := \seb (x_i)_{i \bi} \in X^{\Z} \mid
 f(x_i) = x_{i+1} \myforall i \bi \sen$ and a homeomorphism
 $\hf : \hX \to \hX$ defined by $\hf((x_i)_{i \bi}) = (x_{i+1})_{i \bi}$.

For each $\hx = (x_i)_{i \bi} \in \hX$ and $n \ge 0$,
 there exists a unique $\baiv \in \baiV_n)$ such that $x_i \in U(\baiv)$.
We express this $\baiv$ as $\baiv_{n,i}$.
We define a sequence $\ddx := (\baiv_{n,i})_{n \bni, i \bi}$.
Further, we define
 $\ddX := \seb \ddx \mid \hx \in \hX\sen
 \subset \prod_{n \bni}\left({\baiV_n}^{\Z}\right)$
 and $\ddx(n,i) := \baiv_{n,i}$ for $n \bni$ and $i \bi$.
Then, $\ddX$ is the set of all
 $y \in \prod_{n \bni}\left({\baiV_n}^{\Z}\right)$
 such that, if we write $y = y(n,i)$ with $n \bni,\ i \bi$,
 the following are satisfied:
 $y(n,i) \in \baiV_n$, 
 $\baifai_{n+1}(y(n+1,i)) = y(n,i)$ for all $n \bni, i \bi$,
 and $(y(n,i),y(n,i+1)) \in \baiE_n$ for all $n \bni$ and $i \bi$.
This is a closed condition, and $\ddX$ is thus a compact metrizable
 zero-dimensional space.
Clearly, $\ddf$ is bijective.
Thus, $(\ddX,\ddf)$ is an invertible zero-dimensional system.
We denote $\ddx[n] := (v_{n,i})_{i \bi}$, i.e. the $n$th line.
For an infinite walk $\ddx[n]$ and some integers $a < b$,
 we denote a finite walk
 $(\ddx[n])[a,b] = (\ddx(n,a),\ddx(n,a+1),\dotsc,\ddx(n,b))$ of $\baiG_n$.
In this paper, the invertible zero-dimensional system $(\ddX,\ddf)$
 is called an \textit{array system generated by} $\baiGcal$.
Let $\hx \in \hX$, $n \ge 0$, and $i \bi$.
Now, we consider three cases.

\begin{indentation}{5mm}{0mm}
\noindent Case 1: Suppose that
 there exists some $e \in \wgE_n$ such that $l(e) > 1$ and\\
 $(\ddx(n,i),\ddx(n,i+1)) \in \baiE(e)$.
In this case, such an $e$ is unique and 
 we can denote $\barx(n,i) := e$.
Once an $e \in \wgE_n$ appears with $l(e) > 1$,
 then this $e$ continues at least $l(e)$ times.

\noindent Case 2: 
Suppose that $(\ddx(n,i),\ddx(n,i+1)) \in \baiE(e)$
 for some $e \in \wgE_n$ with
 $l(e) = 1$ and that
 $(\ddx(n+1,i),\ddx(n+1,i+1)) \in \baiE(e_{n+1})$ and $l(e_{n+1}) > 1$.
Then, by taking a factor map $\fai_{n+1,n}(e_{n+1})$ in $\wgGcal$,
 we can determine a unique $e \in \wgE_n$ in a suitable position
 of the walk $\fai_{n+1,n}(e_{n+1})$.
We can denote $\barx(n,i) := e$.

\noindent Case 3: Suppose that
 $(\ddx(n+1,i),\ddx(n+1,i+1)) \in \baiE(e_{n+1})$ and $l(e_{n+1}) = 1$.
We note that $e_{n+1}$ are not identified uniquely.
Nevertheless, $\ddx(n+1,i) = v_{n+1} \in \wgV_{n+1}$ is identified uniquely.
Thus, by the \pdirectionalitys condition, $e = \fai_{n+1}(e_{n+1})$
 is identified uniquely.
We denote $\barx(n,i) := e$.
\end{indentation}

\noindent
Thus, we have defined $\barx(n,i)$ for all $n \bni$ and all $i \bi$.
We define a sequence
 $\barx[n] := (\dotsc, \barx(n,-1), \barx(n,0), \barx(n,1), \dotsc)$.
For integers $n \ge 0$ and $s < t$,
 we denote $(\barx[n])[s,t] :=
 (\barx(n,s),\barx(n,s+1),\barx(n,s+2),\dotsc, \barx(n,t))$.
For each $n \ge 0$,
 the lines $\barx[n]$ are compiled
 (see \cref{fig:array-system,fig:array-system-2}).
Following \cite{DOWNAROWICZ_2008FiniteRankBratteliVershikDiagAreExpansive},
 we make an $n$-\textit{cut} in each $\barx[n]$
 just before each $i$ with
 $\ddx(n,i) = s(\barx(n,i))$
 (see \cref{fig:array-system,fig:array-system-2}).
We recall the notation of the singleton weighted graph
 $\wgG_0 = (\seb v_0 \sen, \seb e_0 \sen)$.
Thus, for each $\hx \in \hX$, $\barx[0] = (\dotsc,e_0,e_0,e_0,\dotsc)$
 that is cut everywhere.
We define $\barX := \seb \barx \mid x \in X \sen$.
Because there exists a bijective factoring map
 from $\ddX$ onto $\barX$,
 $\barX$ is also a compact metrizable zero-dimensional 
 space.
We define the shift map $\barf : \barX \to \barX$ that shifts left.
Thus, we have an invertible zero-dimensional system $(\barX,\barf)$,
 which we call
 an \textit{array system generated by $\wgGcal$}.
If it is necessary to distinguish the beginning of the edge,
 this can be done by changing
 $\barx(n,i) = e$ to $\barx(n,i) = \dot{e}$
 for all $i$ with $\ddx(n,i) = s(\barx(n,i))$.
The next theorem is obvious.
\begin{thm}
Let $\wgGcal : \covrepa{\wgG}{\fai}$ be a weighted covering model.
We write $\invlim \wgGcal = (X,f)$.
Then, the natural extension $(\hX,\hf)$ defined above is topologically 
 conjugate to the array systems $(\ddX,\ddf)$ and $(\barX,\barf)$.
\end{thm}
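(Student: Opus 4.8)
The plan is to produce explicit shift-equivariant homeomorphisms $(\hX,\hf) \to (\ddX,\ddf) \to (\barX,\barf)$ by unwinding the definitions; since all three spaces are compact metrizable, in each case it is enough to exhibit a continuous equivariant bijection.

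For $(\hX,\hf) \cong (\ddX,\ddf)$ I would set $\Phi(\hx) := \ddx$, i.e.\ $\Phi((x_i)_{i\bi})(n,i) = \baiv_{n,i}$, the unique $\baiv \in V(\baiG_n)$ with $x_i \in U(\baiv)$. That $\Phi(\hx) \in \ddX$ is exactly the intrinsic description of $\ddX$ recalled before the theorem: the relation $\baifai_{n+1}(\baiv_{n+1,i}) = \baiv_{n,i}$ comes from the definition of the inverse limit $X = V_{\baiGcal}$, and $(\baiv_{n,i},\baiv_{n,i+1}) \in E(\baiG_n)$ follows from $f(x_i) = x_{i+1}$ together with the characterisation $U(\baiv) \cap f(U(\baiu)) \nekuu \iff (\baiu,\baiv) \in E(\baiG_n)$. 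The map $\Phi$ is continuous because each coordinate $\hx \mapsto \baiv_{n,i}$ is locally constant, and it is obviously equivariant. It is surjective because, given $y \in \ddX$, the column $(y(n,i))_{n \ge 0}$ is a point $x_i$ of $V_{\baiGcal} = X$ and the edge relation forces $f(x_i) = x_{i+1}$, so $(x_i)_{i\bi} \in \hX$ maps onto $y$; and it is injective because $\bigcup_{n\bni}\Ucal(\baiG_n)$ generates the topology of $X$, so $\ddx = \ddx'$ forces $x_i = x_i'$ for every $i$.

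For $(\ddX,\ddf) \cong (\barX,\barf)$ the relevant map $\Theta : \ddx \mapsto \barx$ is the one constructed in Cases 1--3 preceding the theorem, together with the $n$-cuts placed just before each $i$ with $\ddx(n,i) = s(\barx(n,i))$. Continuity and shift-equivariance are built into the construction, and $\Theta$ is onto $\barX$ by definition. Injectivity I would obtain by recovering $\ddx$ from $\barx$: if $\barx(n,i) = e$ and $i_0 \le i$ is the position of the cut immediately at or to the left of $i$, then $i - i_0 < l(e)$ and $\ddx(n,i)$ is forced to be the $(i-i_0)$-th vertex of $V(e)$ (when $l(e)=1$ the cuts sit at every position and $\ddx(n,i) = s(e)$). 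Composing, $\Theta\circ\Phi$ is the conjugacy $(\hX,\hf) \cong (\barX,\barf)$, with $(\ddX,\ddf)$ conjugate to both.

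The only step that is not a pure formality is checking that $\Theta$ is well defined and injective: one must verify that Cases 1--3 are exhaustive and mutually consistent, using uniqueness of the length-$\ge 2$ edge $e$ with $(\ddx(n,i),\ddx(n,i+1)) \in E(e)$ in Case 1, the factor map $\fai_{n+1,n}$ in Case 2, and \pdirectionalitys of $\fai_{n+1}$ in Case 3, and that a maximal run of a fixed symbol $e$ with $l(e) \ge 2$ is cut exactly every $l(e)$ steps so that the position within $V(e)$ is determined. Everything else---continuity from dependence on finitely many coordinates, equivariance of the shifts, and upgrading a continuous bijection of compact metrizable spaces to a homeomorphism---is immediate, which is why the theorem is stated as obvious.
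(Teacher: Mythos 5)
Your proposal is correct and follows exactly the route the paper intends: the paper offers no written proof (it declares the theorem obvious after constructing the maps $\hx \mapsto \ddx$ and $\ddx \mapsto \barx$ with the Cases 1--3 and the cut placement), and your argument simply fleshes out that same construction, including the only genuinely non-trivial points (exhaustiveness of the three cases and recovery of $\ddx$ from the cut data). Nothing in your verification conflicts with the paper, so no further comparison is needed.
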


\begin{figure}
\begin{center}\leavevmode
\xy
(-5,18)*{}; (105,18)*{} **@{-},
 (49,15)*{e_{n,1}
 \hspace{9mm} e_{n,3} \hspace{4mm}
 \hspace{12mm} e_{n,1} \hspace{6mm}
 \hspace{8mm} e_{n,3} \hspace{3mm}
 \hspace{10mm} e_{n,2} \hspace{4mm}
 \hspace{8mm} e_{n,1}
 \hspace{1mm} },
(7,18)*{}; (7,12)*{} **@{-},
(28,18)*{}; (28,12)*{} **@{-},
(49,18)*{}; (49,12)*{} **@{-},
(70,18)*{}; (70,12)*{} **@{-},
(84,18)*{}; (84,12)*{} **@{-},
(-5,12)*{}; (105,12)*{} **@{-},
 (54,9)*{e_{n+1,5}
 \hspace{35mm} e_{n+1,1} \hspace{3mm}
 \hspace{28mm} e_{n+1,3} \hspace{2mm} },
(28,12)*{}; (28,6)*{} **@{-},
(84,12)*{}; (84,6)*{} **@{-},
(-5,6)*{}; (105,6)*{} **@{-},
\endxy
\end{center}
\caption{$n$th and $(n+1)$th rows of an array system with cuts.}
\label{fig:array-system}
\end{figure}
For an interval $[n,m]$ with $m > n \ge 0$, the combination of rows $\barx[n']$
 with $n \le n' \le m$ is denoted as $\barx[n,m]$.
For each $\hx \in \hX$, 
 we obtain an infinite combination $\barx[0,\infty) \in \hX$
 of rows $\barx[n]$ for all $0 \le n < \infty$
 (see \cref{fig:array-system,fig:array-system-2}).
Note that for $m > n \ge 0$, if there exists an $m$-cut at position $i$
 (i.e. just before position $i$), then
 there exists an $n$-cut at position $i$.

\begin{figure}
\begin{center}\leavevmode
\xy
(-5,30)*{}; (105,30)*{} **@{-},
 (49,27)*{e_0\phantom{{}_{{},f}}\ e_0\phantom{{}_{{},f}}
 \ e_0\phantom{{}_{{},l}}
 \ e_0\phantom{{}_{{},l}}\ e_0\phantom{{}_{{},l}}
 \ e_0\phantom{{}_{{},f}}\ e_0\phantom{{}_{{},l}}
 \ e_0\phantom{{}_{{},l}}\ e_0\phantom{{}_{{},l}}
 \ e_0\phantom{{}_{{},f}}\ e_0\phantom{{}_{{},l}}
 \ e_0\phantom{{}_{{},l}}\ e_0\phantom{{}_{{},l}}
 \ e_0\phantom{{}_{{},f}}\ e_0\phantom{{}_{{},l}}\ e_0},
(0,30)*{}; (0,24)*{} **@{-},
(7,30)*{}; (7,24)*{} **@{-},
(14,30)*{}; (14,24)*{} **@{-},
(21,30)*{}; (21,24)*{} **@{-},
(28,30)*{}; (28,24)*{} **@{-},
(35,30)*{}; (35,24)*{} **@{-},
(42,30)*{}; (42,24)*{} **@{-},
(49,30)*{}; (49,24)*{} **@{-},
(56,30)*{}; (56,24)*{} **@{-},
(63,30)*{}; (63,24)*{} **@{-},
(70,30)*{}; (70,24)*{} **@{-},
(77,30)*{}; (77,24)*{} **@{-},
(84,30)*{}; (84,24)*{} **@{-},
(91,30)*{}; (91,24)*{} **@{-},
(98,30)*{}; (98,24)*{} **@{-},
(105,30)*{}; (105,24)*{} **@{-},
(-5,24)*{}; (105,24)*{} **@{-},
 (48,21)*{e_{1,3}
 \hspace{11mm} e_{1,3} \hspace{8mm}
 \hspace{8mm} e_{1,1} \hspace{10mm}
 \hspace{5mm} e_{1,3} \hspace{8mm}
 \hspace{4mm} e_{1,2} \hspace{8mm}
 \hspace{3mm} e_{1,1}
 \hspace{0mm} },
(7,24)*{}; (7,18)*{} **@{-},
(28,24)*{}; (28,18)*{} **@{-},
(49,24)*{}; (49,18)*{} **@{-},
(70,24)*{}; (70,18)*{} **@{-},
(84,24)*{}; (84,18)*{} **@{-},
(-5,18)*{}; (105,18)*{} **@{-},
(56,15)*{e_{2,3} \hspace{0.7mm}\
 \hspace{35mm} e_{2,1} \hspace{27mm}
 \hspace{11mm} e_{2,3}
 \hspace{2mm} },
(28,18)*{}; (28,12)*{} **@{-},
(84,18)*{}; (84,12)*{} **@{-},
(-5,12)*{}; (105,12)*{} **@{-},
(36,9)*{e_{3,3} \hspace{26mm}
 \hspace{15mm} e_{3,1}},
(28,12)*{}; (28,6)*{} **@{-},
(-5,6)*{}; (105,6)*{} **@{-},
(50,4)*{\vdots},
\endxy
\end{center}
\caption{First four rows of an array system.}\label{fig:array-system-2}
\end{figure}

\begin{nota}\label{nota:n-symbol}
For each edge $e \in \wgE_n$, if we write
$\fai_n(e) = a_1 a_2 \dotsb a_{w(e)}$ with $a_j \in \wgE_{n-1}$
 ($1 \le j \le w(e)$) as 
 a walk in $\wgG_{n-1}$,
 then each $a_j$ similarly determines a walk of $\wgG_{n-2}$.
Thus, we can determine a set of circuits arranged in a square form, as in
 \cref{fig:2-symbol}.
This form is said to be the \textit{$n$-symbol} and is denoted by $e$.
For $0 \le m < n$,
 the projection $e[m]$ that is a finite sequence of circuits of $\wgG_m$
 is also defined.
\end{nota}

\begin{figure}
\begin{center}\leavevmode
\xy
(28,38)*{}; (84,38)*{} **@{-},
(57,35)*{e_0\phantom{{}_{{},l}}\ e_0\phantom{{}_{{},l}}
 \ e_0\phantom{{}_{{},l}}\ e_0\phantom{{}_{{},l}}
 \ e_0\phantom{{}_{{},f}}\ e_0\phantom{{}_{{},l}}
 \ e_0\phantom{{}_{{},l}}\ e_0\phantom{{}_{{},l}}},
(28,38)*{}; (28,32)*{} **@{-},
(35,38)*{}; (35,32)*{} **@{-},
(42,38)*{}; (42,32)*{} **@{-},
(49,38)*{}; (49,32)*{} **@{-},
(56,38)*{}; (56,32)*{} **@{-},
(63,38)*{}; (63,32)*{} **@{-},
(70,38)*{}; (70,32)*{} **@{-},
(77,38)*{}; (77,32)*{} **@{-},
(84,38)*{}; (84,32)*{} **@{-},
(28,32)*{}; (84,32)*{} **@{-},
 (60,29)*{
 \hspace{7mm} e_{1,1} \hspace{9mm}
 \hspace{5mm} e_{1,3} \hspace{7mm}
 \hspace{3mm} e_{1,2} \hspace{8mm}
 \hspace{2mm} },
(28,32)*{}; (28,26)*{} **@{-},
(49,32)*{}; (49,26)*{} **@{-},
(70,32)*{}; (70,26)*{} **@{-},
(84,32)*{}; (84,26)*{} **@{-},
(28,26)*{}; (84,26)*{} **@{-},
(56,23)*{
 \hspace{23mm} e_{2,1} \hspace{23mm}},
(28,26)*{}; (28,20)*{} **@{-},
(84,26)*{}; (84,20)*{} **@{-},
(28,20)*{}; (84,20)*{} **@{-},
\endxy
\end{center}
\caption{$2$-symbol corresponding to the edge $e_{2,1}$
 of \cref{fig:array-system-2}.}\label{fig:2-symbol}
\end{figure}
It is clear that $\barx[n] = \barx'[n]$
 implies $\barx[0,n] = \barx'[0,n]$.
If $x \ne x'$ $(x, x' \in X)$,
 then there exists some $n > 0$ with $x[n] \ne x'[n]$.
For $x, x' \in X$,
 we say that the pair $(x,x')$ is \textit{$n$-compatible}
 if $x[n] = x'[n]$.
If $x[n] \ne x'[n]$,
 then we say that $x$ and $x'$ are \textit{$n$-separated}.
We recall that,
 if there exists an $n$-cut at position $k$,
 then there exists an $m$-cut at position
 $k$ for all $0 \le m \le n$.
The set $\barX_n := \seb \barx[n] \mid x \in X \sen$ is a two-sided subshift
 of a finite set
 $\wgE_n \cup \seb \dot{e} \mid e \in \wgE_n \sen$.
The factoring map is denoted by $\pi_n : \barX \to \barX_n$,
 and the shift map is always denoted by $\barT : \barX_n \to \barX_n$.
For $m > n \ge 0$, the factoring map $\pi_{m,n} : \barX_m \to \barX_n$
 is defined by $\pi_{m,n}(\barx[m]) = \barx[n]$ for all $x \in X$.

\section{Stationary models and substitution subshifts}
\label{sec:substitution}
In this section, we present some examples of stationary graph covering models.
In \cite{DURAND_1999SubstDynSysBratteliDiagDimGroup},
 Durand, Host, and Skau showed
 the relation between the stationary Bratteli diagrams and
 primitive substitution systems.
Further, in \cite{BEZUGLYI_2009AperioSubstSysBraDiag},
 Bezuglyi, Kwiatkowski, and Medynets
 extended these results to aperiodic substitution systems.
In this paper, we will not conduct such a detailed investigation.
Without thinking of topological conjugacies, we treat only the formal relation
 between substitution subshifts and
 stationary graph covering models.
The resulting array systems have to be factored to the first lines
 $(\barX_1,\barT)$
 to produce the intended substitution subshifts.

\subsection{Stationary graph covering models}
\label{subsec:stationary-systems}
In this subsection, as an analogue of the stationary Bratteli--Vershik models,
 we define stationary graph covering models.
We say that a flexible cover $\fai : \fgG \to \fgG$
 is a flexible \textit{self-cover}.
\begin{defn}[Stationary flexible graph covering]
\label{defn:stationary-flexible-graph-covering}
Suppose that there exists a flexible graph $\fgG = (\fgV,\fgE)$
 and a flexible self-cover $\fai : \fgG \to \fgG$.
Suppose that there exists a sequence of positive integers
 $n(e)\  (e \in \fgE)$.
Define $\fgG_0$ to be the singleton graph $(\seb v_0 \sen, \seb e_0 \sen)$,
 $\fgG_n := \fgG$ for
 all $n \bpi$, $\fai_n := \fai$ for all $n \ge 2$,
 and $\fai_1$
 to be the unique natural homomorphism such that, for each $e \in \fgG_1$,
 $\fai_1(e) = e_0^{n(e)} := \underbrace{e_0\ e_0\ \dotsb\ e_0}_{n(e)}$.
We say that $\fgGcal : \covrepa{\fgG}{\fai}$ is a
 \textit{stationary flexible graph covering model generated by}
 a flexible self-cover $\fai: \fgG \to \fgG$ and
 a sequence $n(e)\ (e \in \fgE)$.
\end{defn}

\begin{rem}
We remark that the stationary ordered Bratteli diagrams
 might not have continuous Vershik maps
 (see the example after
 \cite[Proposition 2.5.]{Medynets_2006CantorAperSysBratDiag}).
However, by the \pdirectionalitys condition of a flexible cover
 $\fai : \fgG \to \fgG$, stationary flexible graph covering models
 always give continuous zero-dimensional systems.
\end{rem}

Let $\fai : \fgG \to \fgG$ be a flexible cover and $\fgG = (\fgV,\fgE)$.
Because $\fai : \fgV \to \fgV$ is a map, for each $v \in \fgV$, there
 exists a positive integer $K(v)$ such that the sequence
 $v, \fai(v), \fai^2(v), \dotsc $ is eventually periodic with least
 period $K(v)$.
We write $K(\fgG,\fai) := \lcm \seb K(v) \mid v \in \fgV \sen$.
Then, we have a flexible cover $\fai' : \fgG \to \fgG$ defined by
 $\fai' := \fai^{K(\fgG,\fai)}$.
For each $v \in \fgV$,
 the sequence $v, \fai'(v), {\fai'}^2(v), \dotsc $ is eventually constant.
Let $\lim v \in \fgV$ be the constant vertex, i.e.
 there exists some $n$ such that
 ${\fai'}^n(v) = \lim v$ and $\fai'(\lim v) = \lim v$.
If we take sufficiently large $L >0$,
 then for all $v \in \fgV$, it follows that ${\fai'}^L(v) = \lim v$.
Thus, by taking $K := L \cdot K(\fgG,\fai)$ and defining $\fai'' := \fai^{K}$,
 it follows that, for all $v \in \fgV$, $\fai''(v) = \lim v$
 and $\fai''(\lim v) = \lim v$.
To simplify the formulation further,
 we assume that $\fai(v) = \lim v$ for all $v \in \fgV$.
Let $e \in \fgE$
and set $v = s(e)$.
By the \pdirectionalitys of $\fai$,
 we obtain a unique
 $\fai(e)(\min) \in s^{-1}(\fai(v)) = s^{-1}(\lim v)$
 that is independent of the choice of $e \in s^{-1}(v)$.
With the condition $\fai(\lim v) = \lim v$, we apply $\fai$
 onto all $s^{-1}(\lim v)$, and we obtain a unique $\limf e \in s^{-1}(\lim v)$,
 i.e. we denote $\limf e := \fai( \fai(e)(\min) )(\min)$.
Thus, $\limf e$ is uniquely determined by $\lim v \in \fgV$.
Taking $\fai^2$ instead of $\fai$,
 we assume that for each $e \in \fgE$,
 $\fai(e)(\min) = \limf e$.
In particular,
 we have that $\fai(\limf e)(\min) = \limf e$.
Let $v \in \fgV$ and $e = r^{-1}(v)$.
If we consider only \bidirectionals flexible self-covers,
 then the same argument
 is possible and we obtain 
 $\liml e$ as well.
Suppose that the \bidirectionalitys condition does not hold.
We define a map $\rho : \fgE \to \fgE$ by
 $\rho(e) = \fai(e)(\max)$ for all $e \in \fgE$.
Then, the sequence $e, \rho(e), \rho^2(e), \dotsc$ is eventually periodic
 and the periodic edges are in $r^{-1}(\lim r(e))$.
Let $K'(e) \ge 1$ be the least period.
By defining $K' := \lcm \seb K'(e) \mid e \in \fgE \sen$
 and taking sufficiently
 large $L' \ge 1$, we define $\barfai := \fai^{L'K'}$.
We define $\barrho : \fgE \to \fgE$
 by $\barrho(e) := \barfai(e)(\max)$ for all $e \in \fgE$.
Then, for each $e \in \fgE$, the sequence
 $\barrho(e), \barrho^2(e), \dotsc$ is constant from the beginning.
This constant edge is denoted as $\liml e$.
For each $e \in \fgE$, we have that $\fai(e)(\max) = \liml e$
 and $\fai(\liml e)(\max) = \liml e$.
Note that, for each $v \in \Vcal$,
 the set $\seb \liml e \mid e \in \fgE \myand \lim r(e) = \lim v \sen$
 might have more than one element.
\begin{defn}\label{defn:straight-flexible-cover}
A flexible self-cover $\fai : \fgG \to \fgG$ is \textit{straight} 
 if, for each $v \in \fgV$, there exists $\lim v \in \fgV$ such that
 $\fai(v) = \lim v$ and $\fai(\lim v) = \lim v$, and for each $e \in \fgE$,
 there exist
 $\limf e \in s^{-1}(\lim s(e))$ and
 $\liml e \in r^{-1}(\lim r(e))$
 such that
 $\fai(e)(\min) = \limf e$,
 $\fai(e)(\max) = \liml e$,
 $\fai(\limf e)(\min) = \limf e$, and
 $\fai(\liml e)(\max) = \liml e$.
We define $\lim \fgV := \seb \lim v \mid v \in \fgV \sen$,
 $\limf \fgE := \seb \limf e \mid e \in \fgE \sen$, and
 $\liml \fgE := \seb \liml e \mid e \in \fgE \sen$.

A stationary flexible graph covering model is \textit{straight}
 if it is generated by a straight flexible self-cover.
\end{defn}

By the \pdirectionalitys condition,
 for each $v \in \lim \fgV$, there exists a unique $e \in \limf \fgE$
 with $s(e) = v$.

\begin{rem}
Let $\fgGcal : \covrepa{\fgG}{\fai}$ be a stationary flexible covering model
 generated by a straight flexible self-cover $\fai : \fgG \to \fgG$.
Consider the corresponding
 weighted covering model $\wgGcal : \covrepa{\wgG}{\fai}$.
Suppose that $e \ne e' \in \liml \fgE$ satisfy $r(e) = r(e')$
 and $e_n, e'_n \in \wgE_n$ are copies of them.
Because $e, e' \in \liml \fgE$,
 we have $\fai_{n+1}(e_{n+1})(\max) = e_n$ and
 $\fai_{n+1}(e'_{n+1})(\max) = e'_n$ for all $n \bpi$.
If $l(e_n) = l(e'_n) = 1$ for all $n \bpi$,
 then it is easy to see that $s(e_n) \ne s(e'_n)$ for all $n \bpi$.
\end{rem}

\subsection{Substitutions}
\label{subsec:substitutions}

In this subsection, similar to 
 Durand, Host, and Skau \cite{DURAND_1999SubstDynSysBratteliDiagDimGroup}
 and Bezuglyi, Kwiatkowski, and Medynets
 \cite{BEZUGLYI_2009AperioSubstSysBraDiag},
 we follow a
 standard method of introduction for the theory of substitution systems.
Further, we make a link with the stationary
 flexible covering models.
Denote a finite alphabet as $A$ and let $A^+$ be the set of all non-empty words
 over $A$.
Let $\sigma : A \to A^+$ be a map.
Let $A_l$ be the set of all letters $a \in A$
 such that $\abs{\sigma^n(a)} \to \infty$ as $n \to \infty$.
Let $A_s = A \setminus A_l$.
We say that a map $\sigma$ is a substitution if $A_l \nekuu$.
We set $A^* := A^+ \cup \seb \ew \sen$, where $\ew$ is the empty word
 of length $0$.
Given a word $u = u_1 u_2 \dotsb u_m$ and an interval $[i,j] \subseteq [1,m]$,
 we write $u[i,j]$ to denote the sub-word $u_i u_{i+1} \dotsb u_j$.
We extend this notation in the obvious way to infinite intervals.
Suppose that $v = v_1 v_2 \dotsb v_m$ is a word.
Then, a word $u = u_1 u_2 \dotsb u_n$ is a \textit{factor} of $v$
 if there exists $[i,j] \subseteq [1,m]$ such that $u = v[i,j]$.
If $u$ is a factor of $v$, then we write $u \prec v$.

It is natural to make a link with flexible self-covers.

\begin{defn}\label{defn:substitution-read-flexible-cover}
Let $\fgG = (\fgV, \fgE)$ be a flexible graph,
 and $\fai : \fgG \to \fgG$ be a flexible self-cover.
Let $\iota : A \to \fgE$ be a bijection.
For each $e \in \fgE$, we have a walk $\fai(e) = e_1 e_2 \dotsb e_k$
 with $e_i \in \fgE$ ($1 \le i \le k$).
Let $\iota(a) = e$ and $\iota(u_i) = e_i$.
Then, we have $\sigma(a) = u_1 u_2 \dotsb u_k$
for each $a \in A$.
If $A_l \nekuu$, then
 we obtain a substitution $\sigma$ called a \textit{substitution read on} the
 flexible self-cover $\fai : \fgG \to \fgG$.
\end{defn}

Note that not all substitution maps are generated by flexible covers in this way.

\begin{defn}
Let $\sigma : A \to A^+$ be a substitution
 with a letter $a \in A$
 such that the length $\abs{\sigma^n(a)} \to \infty$ as $n \to \infty$.
Let $\sL(\sigma)$ denote the \textit{language of} $\sigma$, i.e.
 $\sL(\sigma)$
 is the set of all words on $A$ that are factors of $\sigma^n(a)$
 for some $a \in A$ and some $n \bpi$.
Let $X_{\sigma}$ denote the subshift of $A^{\Z}$ associated with this language,
 i.e. the set of all $x \in A^{\Z}$ in which every finite factor belongs to
 $\sL(\sigma)$.
It follows that $X_{\sigma}$ is a non-empty closed set in $A^{\Z}$ and
 is invariant under the shift. We denote the restriction of
 the shift to $X_{\sigma}$ as $T$, i.e. $(T(x)_i) = x_{i+1}$ ($i \bi$)
 for all $x \in X_{\sigma} \subseteq A^{\Z}$.
The invertible zero-dimensional system
 $(X_{\sigma}, T)$ is called the
 \textit{substitution subshift associated with} $\sigma$.
\end{defn}

\begin{defn}
Let $\fai : \fgG \to \fgG$ be a flexible self-cover and $\sigma : A \to A^+$
 be the substitution read on the flexible self-cover.
We assume that $A_l \nekuu$.
Define the sequence $n(e) = 1\  (e \in \fgE)$.
Suppose that the stationary flexible covering model
 $\fgGcal : \covrepa{\fgG}{\fai}$
 generated by $\fai: \fgG \to \fgG$ and
 the sequence $n(e) = 1\ (e \in \fgE)$
 has the inverse limit $\invlim \fgGcal = (X,f)$.
Each walk $w$ on $\fgG_1$ or $\fgG$
 is considered to be a word by the identification $\iota$, i.e.
 we define $\iota^{-1}(w) \in A^+$.
Further, for each $\barx[1] \in \barX_1$ and $s < t$,
 we define $\iota^{-1}(\barx[1][s,t]) \in A^+$.
We also define $\iota^{-1}(\barx[1]) \in A^{\Z}$.
\end{defn}

\begin{lem}\label{lem:natural-injection-from-substitution-system-to-X1}
Let $\fai : \fgG \to \fgG$ be a flexible self-cover and $\sigma : A \to A^+$
be the substitution read on the flexible self-cover.
We assume that $A_l \nekuu$.
Define the sequence $n(e) = 1\  (e \in \fgE)$.
Suppose that the stationary flexible covering model
 $\fgGcal : \covrepa{\fgG}{\fai}$
 generated by $\fai: \fgG \to \fgG$ and
 the sequence $n(e) = 1\ (e \in \fgE)$
 has the inverse limit $\invlim \fgGcal = (X,f)$.
Then, there exists a canonical injection
 $(X_{\sigma},T) \to (\barX_1,\barT)$.
\end{lem}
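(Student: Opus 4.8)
The plan is to let the map be the inclusion of $X_\sigma$ into $\barX_1$, transported along the bijection $\iota$, and to reduce everything to the inclusion $X_\sigma\subseteq\barX_1$ (after this identification). First I would record the effect of the hypothesis $n(e)=1$ $(e\in\fgE)$: then $\fai_1(e)=e_0$ for every $e\in\fgE_1$, so by the length convention of \cref{rem:principle} every edge of $\fgG_1$ has length $1$; consequently, in the array system of \cref{subsec:array-systems}, the $1$-cut before position $i$ (which occurs when $\ddx(1,i)=s(\barx(1,i))$) occurs at \emph{every} $i$. Thus each $\barx[1]$ is cut at every position, every symbol carries a dot, and $\barX_1$ is a two-sided subshift over $\seb\dot e\mid e\in\fgE_1\sen$; the bijection $\iota\colon A\to\fgE=\fgE_1$ identifies it with a two-sided subshift $Y\subseteq A^{\Z}$ on which $\barf_1$ is the shift. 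It then remains to prove $X_\sigma\subseteq Y$, after which the inclusion, composed with $\iota$, is automatically injective, continuous, and conjugates $T_\sigma$ to $\barf_1$, hence is a topological embedding — the asserted canonical injection.

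The heart is a \emph{reading lemma}, proved by induction on $m\bni$: fix $e\in\fgE$, put $l:=l(m{+}1,e)$, and suppose $\hx=(x_i)_{i\bi}\in\hX$ and $s\in\Z$ are such that the orbit traverses the level-$(m{+}1)$ copy of $e$ over $[s,s+l)$ — that is, $\barx(m{+}1,i)=e$ for all $s\le i<s+l$, with an $(m{+}1)$-cut just before $s$. Then $(\barx[1])[s,s+l-1]$ is the walk $\fai^{m}(e)$ of $\fgG_1$, so $\iota^{-1}\bigl((\barx[1])[s,s+l-1]\bigr)=\sigma^{m}(\iota^{-1}(e))$ by \cref{defn:substitution-read-flexible-cover}. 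For the inductive step one unwinds $\fai_{m+1}(e)=\fai(e)=f_1f_2\cdots f_k$ in $\fgG_m$: since $\baifai_{m+1}$ sends the subdivision path of $e$ in $\baiG_{m+1}$ to the concatenation of the subdivision paths of $f_1,\dots,f_k$ in $\baiG_m$ (\cref{nota:basic-graph,lem:weighted-cover-to-basic-cover}), and walk lengths are preserved, $[s,s+l)$ splits into consecutive blocks of sizes $l(m,f_1),\dots,l(m,f_k)$ over which the orbit traverses $f_1,\dots,f_k$ at level $m$, with an $m$-cut at the start of each block; the case analysis of \cref{subsec:array-systems} then yields $\barx(m,i)=f_p$ throughout the $p$-th block, so the inductive hypothesis applied to each $f_p$ gives that $(\barx[1])$ reads $\fai^{m-1}(f_1)\cdots\fai^{m-1}(f_k)=\fai^{m}(e)$ on $[s,s+l)$. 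The base case $m=0$ is the tautology $\barx(1,s)=e$. One also checks that every $e\in\fgE$ occurs as such a level-$(m{+}1)$ block for some $\hx\in\hX$: the $\baiG_n$ are surjective relations and the basic covers are surjective, so every finite subdivision path of $\baiG_{m+1}$ extends to a point of $\hX$ by a standard compactness argument. Hence for every $a\in A$ and $m\bpi$ the word $\sigma^{m}(a)$ occurs as a factor of some $\barx[1]$.

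The rest is soft. Any $w\in\sL(\sigma)$ is a factor of some $\sigma^{m}(a)$, hence a factor of some $\barx[1]$, so $\sL(\sigma)\subseteq\sL(Y)$, where $\sL(Y)$ denotes the set of words occurring in elements of $Y$. Since $\sL(X_\sigma)\subseteq\sL(\sigma)$ by the definition of $X_\sigma$ and a two-sided subshift is determined by its language, $X_\sigma\subseteq Y$. Transporting along $\iota$ gives the canonical injection $(X_\sigma,T_\sigma)\to(\barX_1,\barf_1)$; equivariance, continuity and injectivity are immediate, and compactness makes it a homeomorphism onto its image.

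The step I expect to be the real obstacle is the reading lemma: one must verify, position by position, that the recursive, case-split definition of $\barx(m,i)$ in \cref{subsec:array-systems} matches the ``slot inside the nested walk $\fai^{m}(e)$'' picture — that the cuts land exactly at the starts of the $\fgG_1$-edges, that a long edge persists for exactly its length so that the blocks are what one expects, and that Case~3's appeal to the \pdirectionalitys condition genuinely pins down the intended edge when every edge in sight has length $1$. Everything else — the length identity $l(m{+}1,\iota(a))=\abs{\sigma^{m}(a)}$, the commutation $\iota^{-1}\circ\fai^{m}=\sigma^{m}\circ\iota^{-1}$, and the passage from languages to subshifts — is routine.
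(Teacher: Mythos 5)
Your proposal is correct and follows essentially the same route as the paper: you show every word of $\sL(\sigma)$ occurs in some $\barx[1]$ by reading the first row of the $(n{+}1)$-symbol of $\iota(a)$ (your ``reading lemma'' is exactly the paper's identity $e_{n+1}[1]=\sigma^n(a)$, which the paper gets from \cref{nota:n-symbol}), use edge-surjectivity to realize that symbol inside some $\barx\in\barX$, and then pass from languages to the inclusion $X_\sigma\subseteq\barX_1$. Your added details (the induction through the array-system case analysis and the compactness extension) only make explicit what the paper leaves to the $n$-symbol machinery.
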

\begin{proof}
We recall that there exists a corresponding weighted graph covering model
 $\wgGcal : \covrepa{\wgG}{\fai}$.
Because $n(e) = 1\  (e \in \fgE)$, each edge in $\wgG_1$ has length $1$.
Thus, $\barX_1$ is a sequence of elements of $\fgE$.
We show that, for every word $w \in \sL_{\sigma}$,
 there exists some $\barx[1] \in \barX_1$ and $s < t$
 such that $w = \iota^{-1}(\barx[1][s,t])$.
Then, for every $x \in X_{\sigma}$ and $s < t$,
 there exists some $\barx_1 \in \barX_1$ and $s' < t'$ such that
 $x[s,t] = \barx_1[s',t']$, implying that $X_{\sigma} \subseteq \barX_1$.
Let $w \in \sL_{\sigma}$.
Then, there exists some $a \in A$ and $n \bpi$ such that 
 $w$ is a factor of $\sigma^n(a)$.
Take $e = \iota(a)$ from $\fgE_{n+1} = \fgE$.
We have the corresponding $e_{n+1} \in \wgE_{n+1}$.
We consider $e_{n+1}$ as an $(n+1)$-symbol.
Then, we have that $e_{n+1}[1] = \sigma^n(a)$ with the identification $\iota$.
By the edge surjectivity of $\fai_i$ ($i \bpi$),
 it is obvious that there exists some $\barx \in \barX$ such that
 $\barx[n+1]$ contains an edge corresponding to $\iota(a)$.
Then, $\barx[1]$ contains $w$, as desired.
\end{proof}

\begin{nota}\label{nota:iota-infty}
The injection that we have obtained by
 \cref{lem:natural-injection-from-substitution-system-to-X1}
 is also denoted as
 $\iota_{\infty} : (X_{\sigma},T) \to (\barX_1,\barT)$.
\end{nota}

Thus, to express the substitution subshifts by some stationary models,
 it is convenient if $\iota_{\infty}$ is bijective.
For this, we obtained \cref{prop:iota-infty-to-be-bijective}.
We begin by stating a lemma that is somewhat trivial.
Nevertheless, because we know the relations among three types of
 graph covering models only abstractly,
 we describe this lemma in detail, concretely.  

\begin{lem}\label{lem:last-limvertex-first-implies-barx}
Let $\fgGcal : \covrepa{\fgG}{\fai}$ be a
 straight stationary flexible graph covering model generated by
 a straight flexible self-cover $\fai: \fgG \to \fgG$ and
 a sequence $n(e)=1$ for all $e \in \fgE$.
For every $v \in \lim \fgV$
 and for every pair $(e_1,e_2) \in (\liml \fgE) \times (\limf \fgE)$
 with $r(e_1) = v = s(e_2)$,
 there exists some $\barx[1] \in \barX_1$ such that $e_1 e_2$ is a factor
 of $\barx[1]$.
\end{lem}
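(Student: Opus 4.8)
The plan is to exhibit, for the given $v \in \lim\fgV$ and the given pair $(e_1,e_2) \in (\liml\fgE)\times(\limf\fgE)$ with $r(e_1)=v=s(e_2)$, an explicit $\barx[1]\in\barX_1$ whose coordinates at positions $0$ and $1$ spell out $e_1 e_2$. The natural candidate is a fixed point of the array system corresponding to the ``maximal-to-minimal'' transition at $v$: since $e_1\in\liml\fgE$, the edge $e_1$ is (essentially) the last edge in an infinitely repeated maximal block over $v$, and since $e_2\in\limf\fgE$, it is the first edge of an infinitely repeated minimal block over $v$; so $e_1 e_2$ should occur at a cut of a suitable array point. First I would set up notation: let $\wgGcal:\covrepa{\wgG}{\fai}$ be the weighted covering corresponding to $\fgGcal$ (every edge of $\wgG_n$ has length $1$ because $n(e)=1$), and recall from \cref{subsec:array-systems} the array system $(\barX,\barf)$ with $\barX_1 = \seb \barx[1] \mid x \in X \sen$ and the projection $\pi_1 : \barX \to \barX_1$. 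It suffices to construct $\barx \in \barX$ (equivalently $\hx \in \hX$, equivalently a point of $\ddX$) such that $\barx[1]$ has $e_1$ at position $0$ and $e_2$ at position $1$, with a cut just before position $1$.

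The key construction goes through the natural extension $(\hX,\hf)$ and the description of $\ddX$ as the set of compatible biinfinite sequences of walks in the graphs $\baiG_n$. Since $\fai$ is straight, $\fai(e_1)(\text{last})=e_1$ and $\fai(e_2)(\text{first})=e_2$, and $\fai(v)=v$ with $v = r(e_1) = s(e_2)$. I would build the row $\barx[n]$ at each level $n\bpi$ by iterating $\fai$: on the ``left half'' (positions $\le 0$) use the leftward-infinite concatenation of $\fai^m$ applied to a walk ending in $e_1$, and on the ``right half'' (positions $\ge 1$) use the rightward-infinite concatenation of $\fai^m$ applied to a walk starting with $e_2$; straightness guarantees these stabilize coherently across levels, so that $\fai_{n+1}$ maps row $n+1$ to row $n$ as required, and that the vertex read at position $0\!-\!1$ (between coordinates $0$ and $1$) is exactly $v$ at every level, producing a cut there. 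Concretely, because $e_1 = \liml e_1$ and $e_2 = \limf e_2$, the pair $e_1 e_2$ already forms a legal length-two walk in $\fgG$ (as $r(e_1)=s(e_2)$), and the ``$\lim$'' conditions make it a fixed piece under $\fai$ in the appropriate one-sided sense; one checks that the resulting biinfinite sequence of rows lies in $\ddX$ (all three closed conditions: membership in $V(\baiG_n)$, compatibility under $\baifai_{n+1}$, and adjacency in $E(\baiG_n)$ hold by the straightness identities), hence defines an $\hx \in \hX$ and thus an $\barx \in \barX$ with the desired factor $e_1 e_2$ in $\barx[1]$.

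The main obstacle I expect is the bookkeeping that makes the leftward and rightward one-sided infinite iterations genuinely \emph{converge} to well-defined biinfinite rows that are simultaneously compatible across all levels $n$. Straightness gives $\fai(e)(\text{first})=\limf e$ and $\fai(e)(\text{last})=\liml e$ with $\fai(\limf e)(\text{first})=\limf e$ and $\fai(\liml e)(\text{last})=\liml e$; the care is in checking that when we pass from level $n$ to level $n+1$ the chosen walk on $\fgG_{n+1}$ maps under $\fai_{n+1}=\fai$ onto a walk extending (not just containing) the level-$n$ walk in the correct position, so that the position indices line up and the cut before position $1$ is preserved at every level. This is exactly the kind of ``winding'' argument that $\fai$ being \pdirectionals and edge-surjective controls, and it is essentially the same mechanism used in \cref{lem:natural-injection-from-substitution-system-to-X1}; once the alignment is set up, the verification that the constructed point lies in $\ddX$ and that $e_1 e_2$ appears as a factor of $\barx[1]$ is routine. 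A clean way to finish is: take the walk $w := \liml e_1\, \limf e_2 = e_1 e_2$ in $\fgG$, note it is $\fai$-stable at both ends, and let $\barx[n] := \dotsb \fai^{n-1}(e_1)\, |\, \fai^{n-1}(e_2)\dotsb$ read off with the canonical cuts; then $\barx:=(\barx[n])_{n\bni} \in \barX$ and position $0$--$1$ of $\barx[1]$ is the factor $e_1 e_2$, as required.
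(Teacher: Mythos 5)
Your plan is correct and follows essentially the same route as the paper's proof: pass to the weighted/basic coverings and the array system, place the level-$n$ copies of $e_1$ and $e_2$ adjacent across an infinite cut, and use the straightness identities $\fai(e_1)(\text{last})=e_1$, $\fai(e_2)(\text{first})=e_2$, $\fai(v)=v$ to get compatibility of the rows, so that row $1$ exhibits the factor $e_1e_2$. The only cosmetic difference is that the paper writes the construction out at the vertex level in $\baiG_n$ (the points $p_1,p_2,p_3$ and the segments $\ddx[n][-l_{n,1},l_{n,2}-1]$) rather than via your iterated-$\fai$ shorthand, and both treatments extend the specified finite windows to a full array point with the same brief appeal.
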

\begin{proof}
From the flexible graph covering model $\fgGcal : \covrepa{\fgG}{\fai}$,
 we obtain a weighted graph covering model $\wgGcal : \covrepa{\wgG}{\fai}$.
From the weighted graph covering model $\wgGcal : \covrepa{\wgG}{\fai}$,
 we obtain a basic graph covering model $\baiGcal : \covrepa{\baiG}{\baifai}$.
We have defined
 $\invlim \fgGcal = \invlim \wgGcal = \invlim \baiGcal$,
 which we denote as $(X,f)$.
From $(X,f)$, we consider the natural extension $(\hX,\hf)$.
Finally, from $(\hX,\hf)$, $\baiGcal$, and $\wgGcal$ (or $\fgGcal$),
 we construct $(\barX,\barf)$.
Let us write $\wgG_n = (\wgV_n,\wgE_n)$ for each $n \bni$.
For each $n \bpi$, there exist
 $v_n \in \wgV_n$ and $e_{n,1}, e_{n,2} \in \wgE_n$,
 which are the copies
 of $v \in \lim \fgV$, $e_1 \in \lim_l \fgE$, and $e_2 \in \lim_f \fgE$,
 respectively.
Let $l_{n,1} = l(e_{n,1})$ and $l_{n,2} = l(e_{n,2})$.
In \cref{nota:basic-graph,lem:weighted-cover-to-basic-cover,nota:from-weighted-to-basic-covering},
 we transformed $\wgGcal$ into $\baiGcal$.
We constructed vertices of the basic graph $\baiG_n = (\baiV_n,\baiE_n)$
 as follows:
 for each $e \in \wgE_n$, we construct
 $\baiV(e) :=
 \seb \baiv_{e,0} = s(e),
 \baiv_{e,1}, \baiv_{e,2}, \dotsc, \baiv_{e,l(e)-1},
 \baiv_{e,l(e)} = r(e) \sen$.
For each $e \in \wgE_n$,
 we produced the edges of the basic graph as
 $\baiE(e) := \seb (\baiv_{e,i}, \baiv_{e,i+1}) \mid 0 \le i < l(e)\sen$.
Thus, each $e_{n,1}, e_{n,2}$ can be described as a walk
 $(\baiv_{n,i,0} = s(e_{n,i}), \baiv_{n,i,1},
 \baiv_{n,i,2}, \dotsc, \baiv_{n,i,l_{n,i}-1},
 \baiv_{n,i,l_{n,i}} = r(e_{n,i}))$ in $\baiG_n$ for $i = 1,2$.
Because
 $v \in \lim \fgV$, $(e_1,e_2) \in (\liml \fgE) \times (\limf \fgE)$
 and $r(e_1) = v = s(e_2)$,
 we have
 $\fai_{n+1}(v_{n+1}) = v_n$,
 $\baiv_{n,1,l_{n,1}} = v_n = \baiv_{n,2,0}$,
 $(\fai_{n+1}(e_{n+1,1}))(\max) = e_{n,1}$, and
 $(\fai_{n+1}(e_{n+1,2}))(\min) = e_{n,2}$.
Thus, we can define three points $p_1,p_2,p_3 \in X$ as
 $p_1 = (\baiv_0, \baiv_{1,1,l_{1,1}-1}, \baiv_{2,1,l_{2,1}-1},
 \baiv_{3,1,l_{3,1}-1},\dotsc)$,
 $p_2 = (\baiv_0, \baiv_1, \baiv_2, \baiv_3, \dotsc)$, and
 $p_3 = (\baiv_0, \baiv_{1,2,1}, \baiv_{2,2,1}, \baiv_{3,2,1}, \dotsc)$.
Because $(\baiv_{n,1,l_{n,1}-1}, v_n)$ and $(v_n, \baiv_{n,2,1})$
 are edges of $\baiG_n$ for all $n \bpi$,
 we have that $f(p_1) = p_2$ and $f(p_2) = p_3$.
Because $v_n$ is also considered to be a vertex in $\wgV_n$,
 at any level $n$, a unique tower of level $n$ never passes all of
 $p_1,p_2$, and $p_3$ simultaneously.
In the form of the array system $(\ddX,\ddf)$ (see \cref{sec:array-systems}),
 intuitively, there seems to be an infinite cut between $p_1$ and $p_2$.
Concretely, in $\ddX$, we can define $\ddx \in \ddX$ such that
 $\ddx[n][-l_{n,1},0]
 = (\baiv_{n,1,0}, \baiv_{n,1,1}, \baiv_{n,1,2},
 \dotsc, \baiv_{n,1,l_{n,i}})$ and
 $\ddx[n][0,l_{n,2}]
 = (\baiv_{n,2,0}, \baiv_{n,2,1}, \baiv_{n,2,2},
 \dotsc, \baiv_{n,2,l_{n,2}})$.
Even if $l_{n,i} = 1$ for some $i = 1,2$,
 by the definition of $\barx$,
 we can conclude that $\barx(1,-1) = e_{1,1}$ and $\barx(1,0) = e_{1,2}$.
Thus, we obtain $\barx[1][-1,0] = e_1 e_2$ by the identification
 $\fgG = \wgG_1$.
\end{proof}

Suppose that  $\iota_{\infty}$ is surjective.
Then, for every such $v,\ e_1$, $e_2$ as in the above lemma,
 if we define $\iota(a_i) = e_i$ $(i = 1,2)$,
 it follows that $a_1 a_2 \in \sL(\sigma)$. 
In addition, if $\barx(1,s) = e_1,\ \barx(1,s+1) = e_2$,
 then for any $L_1 > 0$ and $L_2 > 0$,
 it follows that
 $\barx[1][s-L_1,s+1+L_2] \in \sL(\sigma)$ by the identification
 with $\iota$, i.e.
 there must exist $a \in A$ and $n \bpi$ such that
 $\iota^{-1}(\barx[1][s-L_1,s+1+L_2]) \prec \sigma^n(a)$.
From this observation, we can state the following:
\begin{lem}\label{lem:iota-infty-is-surjective-then-limfgV-is-in-Midfai}
Let $\fai : \fgG \to \fgG$ be a straight flexible self-cover,
 and $\sigma : A \to A^+$
be the substitution read on the flexible self-cover.
We assume that $A_l \nekuu$.
Suppose that $\iota_{\infty}$ is surjective.
It then follows that, for every $v \in \lim \fgV$,
 $e_1 \in \liml \fgE$, and $e_2 \in \limf \fgE$ with
 $r(e_1) = v = s(e_2)$,
 there exist some $e \in \fgE$ and $n > 0$
 such that $e_1 e_2$ is a sub-walk of $\fai^n(e)$.
\end{lem}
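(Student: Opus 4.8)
The plan is to read off the conclusion from \cref{lem:last-limvertex-first-implies-barx} together with the hypothesis that $\iota_{\infty}$ is surjective and the very definition of the substitution $\sigma$ read on $\fai$. First I would fix $v \in \lim\fgV$, $e_1 \in \liml\fgE$, and $e_2 \in \limf\fgE$ with $r(e_1) = v = s(e_2)$. By \cref{lem:last-limvertex-first-implies-barx} there is an $\barx[1] \in \barX_1$ and an index $s$ with $\barx(1,s) = e_1$ and $\barx(1,s+1) = e_2$; since here $n(e) = 1$ for all $e \in \fgE$, the row $\barx[1]$ is a biinfinite walk in $\fgG = \wgG_1$, and under the identification $\iota : A \to \fgE$ the block $e_1 e_2$ reads as the word $a_1 a_2$ with $\iota(a_i) = e_i$.

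Next I would invoke surjectivity of $\iota_{\infty} : (X_{\sigma},T_{\sigma}) \to (\barX_1,\barf_1)$. Because $\iota_{\infty}$ is the inclusion obtained after identifying letters via $\iota$ (see \cref{lem:natural-injection-from-substitution-system-to-X1,nota:iota-infty}), surjectivity means $\iota^{-1}(\barx[1]) \in X_{\sigma}$, so every finite factor of this biinfinite sequence lies in $\sL(\sigma)$. In particular $a_1 a_2 = \iota^{-1}(e_1 e_2) \in \sL(\sigma)$, i.e., there are $a \in A$ and $n > 0$ with $a_1 a_2 \prec \sigma^n(a)$; say $\sigma^n(a) = w\, a_1 a_2\, w'$ in $A^*$.

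Finally I would transport this factorization back to $\fgG$. By \cref{defn:substitution-read-flexible-cover} one has $\iota(\sigma(b)) = \fai(\iota(b))$ for every $b \in A$, where $\iota$ is extended to words and to walks by concatenation; a routine induction on $n$ then gives $\iota(\sigma^n(b)) = \fai^n(\iota(b))$. Applying $\iota$ to $\sigma^n(a) = w\, a_1 a_2\, w'$ yields $\fai^n(\iota(a)) = \iota(w)\, e_1 e_2\, \iota(w')$, so $e_1 e_2$ is a sub-walk of $\fai^n(e)$ for $e := \iota(a) \in \fgE$ and $n > 0$, as desired.

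There is essentially no serious obstacle in this lemma: all the substantive work has already been done in \cref{lem:last-limvertex-first-implies-barx}, and the remaining steps amount to bookkeeping --- keeping straight the identifications $\fgG = \wgG_1$ and $\iota : A \leftrightarrow \fgE$, the passage between finite factors of sequences in $X_{\sigma}$ and sub-walks, and the commutation $\iota \circ \sigma^n = \fai^n \circ \iota$. The only point that deserves a line of care is that membership in $\sL(\sigma)$ already forces $n \ge 1$, which is exactly the $n > 0$ demanded by the statement.
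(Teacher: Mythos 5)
Your proposal is correct and follows essentially the same route as the paper: obtain a row $\barx[1]$ containing $e_1 e_2$ (the paper re-runs the argument of \cref{lem:last-limvertex-first-implies-barx}, you simply cite its statement), use surjectivity of $\iota_{\infty}$ to conclude $a_1 a_2 \in \sL(\sigma)$, and translate $a_1 a_2 \prec \sigma^n(a)$ back through $\iota$ into a sub-walk of $\fai^n(e)$. Your extra care about the commutation $\iota \circ \sigma^n = \fai^n \circ \iota$ and about $n \ge 1$ is just a more explicit write-up of the paper's final step.
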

\begin{proof}
We omit the proof because we have already seen the proof just before the statement of the lemma.
\end{proof}

To obtain a sufficient condition for $\iota_{\infty}$ to be surjective,
 we need the following definition:
\begin{defn}
Let $\fai : \fgG \to \fgG$ be a straight flexible self-cover.
A finite sequence $(v_1,v_2,\dotsc,v_k)$
 with $v_i \in \lim \fgV$ $(1 \le i \le k)$
 is \textit{constant} if
 there exists a sequence $e_1,e_2,\dotsc,e_{k-1} \in \fgE$
 such that $v_i = s(e_i)$ and $r(e_i) = v_{i+1}$ for all $1 \le i < k$,
 and the walk $w := (e_1,e_2,\dotsc,e_{k-1})$ satisfies
 $\fai(w) = w$.
Note that these $e_i$ do not change length, even when
 they are realised in $\wgG_n$ for $n > 1$.
Further, we establish a convention that a sequence $(v)$ (of length $1$)
 with $v \in \lim \fgV$ is constant in the above definition.
\end{defn}

\begin{rem}
Let $\fai : \fgG \to \fgG$ be a straight flexible self-cover and
 $(v_1,v_2,\dotsc,v_k)$ be a constant sequence.
By definition, there exists a walk $w = (e_1,e_2,\dotsc,e_{k-1})$ on $\fgG$
 such that $v_i = s(e_i)$ and $r(e_i) = v_{i+1}$ for all $1 \le i < k$
 and $\fai(w) = w$.
Because of the \pdirectionalitys condition, the walk $w$ has to be unique.
\end{rem}

\begin{defn}
Let $(v_1,v_2,\dotsc,v_k)$ be a constant sequence.
Let $w = (e_1,e_2,\dotsc,e_{k-1})$ be the unique walk on $\fgG$
 with $v_i = s(e_i)$ and $r(e_i) = v_{i+1}$ for all $1 \le i < k$ and
 $\fai(w) = w$.
Then, we say that $(v_1,v_2,\dotsc,v_k)$ is \textit{overlapped}
 if, for every $e_0 \in \liml \fgE$ with $r(e_0) = v_1$ and unique
 $e_k \in \limf \fgE$ with $s(e_k) = v_k$,
 there exists some $e \in \fgE$ and $n \bpi$ such that
 $\fai^n(e)$ has a sub-walk $w' = (e_0,e_1,e_2, \dotsc,e_k)$.
\end{defn}

The next lemma provides a necessary condition for $\iota_{\infty}$ to be
 surjective.

\begin{lem}\label{lem:surjective-implies-overlap}
Let $\fai : \fgG \to \fgG$ be a straight flexible self-cover.
Suppose that $\iota_{\infty}$ is surjective.
Then, every constant sequence $(v_1,v_2,\dotsc,v_k)$ is overlapped.
\end{lem}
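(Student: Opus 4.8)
The plan is to reduce, using surjectivity of $\iota_{\infty}$, to exhibiting a single point of $\barX$ one of whose first-row windows codes the walk $w'' := e_0\, e_1 \dotsb e_{k-1}\, e_k$, where $w = (e_1,\dotsc,e_{k-1})$ is the $\fai$-fixed walk attached to the constant sequence $(v_1,\dotsc,v_k)$, $e_0 \in \liml\fgE$ has $r(e_0)=v_1$, and $e_k \in \limf\fgE$ is the unique edge with $s(e_k)=v_k$; that point will be produced by the array-gluing construction of the proof of \cref{lem:last-limvertex-first-implies-barx}. For $k=1$ the walk $w$ is empty and the assertion is exactly \cref{lem:iota-infty-is-surjective-then-limfgV-is-in-Midfai}, so I would assume $k \ge 2$, although the construction below also covers $k=1$.

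First I would record the consequences of straightness that I need. Since $\fai(e_1)\fai(e_2)\dotsb\fai(e_{k-1}) = e_1 e_2 \dotsb e_{k-1}$ is a walk with exactly $k-1$ edges while each $\fai(e_i)$ is a nonempty walk, every $\fai(e_i)$ is a single edge and in fact $\fai(e_i)=e_i$ for $1 \le i < k$; hence $e_1 = \limf e_1 \in \limf\fgE$, $e_{k-1} = \liml e_{k-1} \in \liml\fgE$, and the level-$n$ copy $e_{n,i} \in \wgE_n$ of each $e_i$ $(1 \le i < k)$ has weighted length $1$ for every $n$ in the corresponding weighted covering $\wgGcal : \covrepa{\wgG}{\fai}$. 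Also $\fai(e_0)$ ends in $\liml e_0 = e_0$ and $\fai(e_k)$ begins with $\limf e_k = e_k$. Then the reduction is immediate: if one has $\barx \in \barX$ with $w''$ occurring in $\barx[1]$ as a factor, surjectivity of $\iota_{\infty}$ gives $\iota^{-1}(\barx[1]) \in X_{\sigma}$, so $\iota^{-1}(w'') \in \sL(\sigma)$; thus $\iota^{-1}(w'') \prec \sigma^n(a)$ for some $a \in A$ and $n \bpi$, which says precisely that $w''$ is a sub-walk of $\fai^n(\iota(a))$, and $e := \iota(a)$ works.

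To build such an $\barx$ I would imitate the proof of \cref{lem:last-limvertex-first-implies-barx}, gluing along the whole walk $w$ rather than along a single vertex. Passing to the basic covering $\baiGcal : \covrepa{\baiG}{\baifai}$ with $\invlim\baiGcal = (X,f)$, one has $\fai_n(e_{n,0})$ ending in $e_{n-1,0}$, $\fai_n(e_{n,1}\dotsb e_{n,k-1}) = e_{n-1,1}\dotsb e_{n-1,k-1}$, and $\fai_n(e_{n,k})$ beginning with $e_{n-1,k}$, for all $n \bpi$. Using these I would define points of $X$: $p'$, whose level-$n$ coordinate is the second-to-last vertex of the basic walk $V(e_{n,0})$; $P_i := (\baiv_0, v_{1,i}, v_{2,i}, \dotsc)$, whose level-$n$ coordinate $v_{n,i}$ is the copy of $v_i$, for $1 \le i \le k$, which lies in $\wgV_{\infty}$ since $v_{n,i} \in \wgV_n$; and $q''$, whose level-$n$ coordinate is the second vertex of $V(e_{n,k})$. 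Exactly as in \cref{lem:last-limvertex-first-implies-barx} one checks that these are well defined — the $\fai$-fixed length-$1$ edges $e_{n,i}$ make the coordinates $v_{n,i}$ mutually compatible under $\baifai_n$ — and that $f(p')=P_1$, $f(P_i)=P_{i+1}$ $(1 \le i < k)$, $f(P_k)=q''$. Picking $\hx \in \hX$ with $x_{-1}=p'$ (so $x_j = f^{j+1}(p')$ for $j \ge -1$, while $x_j$ for $j \le -2$ is an $f$-preimage chosen by surjectivity of $f$), the associated array $\ddx \in \ddX$ has, around position $0$, the basic walks $V(e_{n,0}), V(e_{n,1}), \dotsc, V(e_{n,k})$ read off consecutively in every row $n$, with a cut just before each $V(e_{n,i})$. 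Since $n(e)=1$, every edge of $\wgG_1 = \fgG$ has length $1$, so, treating the cuts exactly as at the end of the proof of \cref{lem:last-limvertex-first-implies-barx}, one gets $\barx[1][-1,k-1] = (e_0, e_1, \dotsc, e_{k-1}, e_k) = w''$, which is what the reduction requires.

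The step I expect to be the main obstacle is this last construction: verifying that the central coordinates of the rows $\barx[n]$ are mutually compatible under $\baifai$ and genuinely spell out $e_{n,0} e_{n,1} \dotsb e_{n,k}$ with a cut just before each. The crux is that the weighted lengths of $e_{n,0}$ and $e_{n,k}$ may grow with $n$, so those two edges recede to $-\infty$ and $+\infty$ and supply the effective left and right cuts, while $e_{n,1}, \dotsc, e_{n,k-1}$ stay length-$1$ edges joining the fixed $\lim\fgV$-vertices $v_{n,1}, \dotsc, v_{n,k}$ — precisely the picture of \cref{lem:last-limvertex-first-implies-barx} with the single shared vertex replaced by the $\fai$-fixed walk $w$, so the same bookkeeping carries over.
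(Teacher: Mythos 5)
Your proposal is correct and follows essentially the same route as the paper: reduce via surjectivity of $\iota_{\infty}$ to producing an $\barx \in \barX$ whose first row contains $e_0\,e_1\dotsb e_{k-1}\,e_k$, that point being obtained by the gluing construction of \cref{lem:last-limvertex-first-implies-barx} extended along the $\fai$-fixed walk $w$. The only difference is one of detail: the paper asserts the existence of such an $\barx$ (with rows $\dotsb e_{n,0}\,w_n\,e_{n,k}\dotsb$) as obvious, whereas you spell out the explicit coordinates in the basic covering and the Case 2/3 bookkeeping, which is exactly the verification the paper leaves implicit.
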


\begin{proof}
Let $(v_1,v_2,\dotsc,v_k)$ be a constant sequence.
In \cref{lem:iota-infty-is-surjective-then-limfgV-is-in-Midfai},
 we have already proved the case in which $k = 1$.
We assume that $k \ge 2$.
Let $w = (e_1,e_2,\dotsc,e_{k-1})$ be a unique walk on $\fgG$ such that
 $v_i = s(e_i)$ and $r(e_i) = v_{i+1}$ for all $1 \le i < k$ and $f(w) =w$.
Because $\fai(w) = w$, it follows that $\fai(e_i) = e_i$ for all $1 \le i <k$.
Take an arbitrary $e_0 \in \liml \fgE$ and a unique $e_k \in \limf \fgE$
  such that $r(e_0) = v_1$ and $s(e_k) = v_k$.
Then, for all $n > 0$, there exist copies $e_{n,i}$ of $e_i$ for all
 $0 \le i \le k$.
It follows that
 $\fai_{n+1}(e_{n+1,i}) = e_{n,i}$ for all $1 \le i \le k-1$,
 $\fai_{n+1}(e_{n+1,0})(\max) = e_{n,0}$, and
 $\fai_{n+1}(e_{n+1,k})(\min) = e_{n,k}$.
The sequence $(e_{n+1,0},e_{n+1,1},\dotsc,e_{n+1,k})$ is realized in
 some $\barx[n] \in \barX_n$.
As for $k=1$,
 the surjectivity of $\iota_{\infty}$ implies that all sub-blocks of 
 $\barx[1]$ are obtained in $\sL(\sigma)$, as desired.
\end{proof}

\begin{prop}\label{prop:iota-infty-to-be-bijective}
Let $\fai : \fgG \to \fgG$ be a straight flexible self-cover.
The map $\iota_{\infty}$ is bijective if and only if
 all the constant sequences are overlapped.
\end{prop}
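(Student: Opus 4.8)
The plan is the following. Injectivity of $\iota_\infty$ is already established in \cref{lem:natural-injection-from-substitution-system-to-X1}, so ``$\iota_\infty$ bijective'' means exactly ``$\iota_\infty$ surjective'', and one direction of the equivalence is immediate: if $\iota_\infty$ is surjective then \cref{lem:surjective-implies-overlap} says every constant sequence is overlapped. So the substance is the converse: \emph{assuming every constant sequence is overlapped, show $\iota_\infty$ is surjective}, i.e.\ show that for every $\barx \in \barX$ every finite factor $w = \barx[1][s,t]$ of $\iota^{-1}(\barx[1]) \in A^{\Z}$ lies in $\sL(\sigma)$. Since $\fgG_N \to \fgG_{N-1} \to \dotsb \to \fgG_1$ is $\fai$ throughout and the level-$1$ expansion of an edge $e \in \fgE_N = \fgE$ is $\fai^{N-1}(e)$, this is equivalent to showing that $w$ occurs as a sub-walk of $\fai^{n}(e)$ for some $e \in \fgE$ and $n \bpi$, since then $\iota^{-1}(w) \prec \sigma^{n}(\iota^{-1}(e)) \in \sL(\sigma)$.

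First I would analyse the cut hierarchy of the array along the columns of the window. For a column $i$ let $d(i)$ be the largest level carrying a cut just before column $i$, with $d(i) = \infty$ when there is a cut there at every level (a \emph{fixed cut}); recall an $N$-cut forces an $m$-cut for all $m \le N$, and since $n(e)=1$ every column carries a $1$-cut. The key claim, which I would isolate as a lemma, is: \textbf{the fixed-cut columns form a set of consecutive integers} (if $d(c) = d(c') = \infty$ with $c < c'$ then $d(c'') = \infty$ for all $c < c'' < c'$), and every column strictly inside such a run carries, at every level, a single edge fixed by $\fai$ (hence of column-span $1$). The proof is by contradiction: if $c < c'$ were consecutive fixed cuts with $c' - c \ge 2$, then for $N$ exceeding all (finite) depths of columns in $(c,c')$ the level-$N$ edge over $[c,c')$ is a single edge $E^{(N)}$ and the identity $\fai(E^{(N+1)}) = E^{(N)}$ holds with single edges on both sides; feeding this into the fixed-point relations of straightness for $\liml$ and $\limf$ in \cref{defn:straight-flexible-cover} forces the $E^{(N)}$ to stabilise to an $\fai$-fixed edge, whose column-span is $1$ at all levels --- contradicting span $c' - c \ge 2$. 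This claim is the technical heart, and is exactly where straightness is indispensable.

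With the claim in hand I would split into two cases. If no column of $\{s+1,\dotsc,t\}$ is a fixed cut, take $N$ larger than all $d(i)$ for $s < i \le t$; then columns $s,\dotsc,t$ lie inside a single level-$N$ edge $E$, so $w$ is a sub-walk of $\fai^{N-1}(E)$ and we are done. Otherwise the fixed cuts meeting the window sit inside a maximal run $[b,b']$ of fixed-cut columns. The interior columns $b,\dotsc,b'-1$ are $\fai$-fixed length-$1$ edges $e^{(b)},\dotsc,e^{(b'-1)}$, and their endpoint vertices $v_b,\dotsc,v_{b'} \in \lim\fgV$ form a constant sequence. For $N$ large the level-$N$ edge just left of column $b$ stabilises, by straightness, to some $e_L \in \liml\fgE$ with $r(e_L) = v_b$, and $\barx[1][s,b-1]$ is the length-$(b-s)$ suffix of the canonical left-infinite expansion $u_L := \lim_m \fai^m(e_L)$; symmetrically $\barx[1][b',t]$ is a prefix of the right-infinite expansion of the unique $e_R \in \limf\fgE$ with $s(e_R) = v_{b'}$ (when a side of the run is infinite, or coincides with an endpoint of the window, that side is absent or trivial). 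Now apply the overlap hypothesis to the constant sequence $(v_b,\dotsc,v_{b'})$ with the boundary edges $e_L$ and $e_R$: there are $e^* \in \fgE$ and $n^* \bpi$ with $\fai^{n^*}(e^*)$ containing the consecutive block $e_L\, e^{(b)} \dotsb e^{(b'-1)}\, e_R$. Since the $e^{(i)}$ are $\fai$-fixed, $\fai^{n^*+m}(e^*)$ contains $\fai^m(e_L)\, e^{(b)} \dotsb e^{(b'-1)}\, \fai^m(e_R)$, and for $m$ large this contains $\barx[1][s,b-1]\, e^{(b)} \dotsb e^{(b'-1)}\, \barx[1][b',t] = w$; hence $w \in \sL(\sigma)$. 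As $\barx$ and the window were arbitrary, $\iota^{-1}(\barx[1]) \in X_\sigma$ for all $\barx \in \barX$, so $\iota_\infty$ is surjective, completing the converse.

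The main obstacle, as noted, is the contiguity claim for fixed cuts together with the fact that an interior column of a run is an $\fai$-fixed length-$1$ edge; this demands a careful dictionary between the array's column/cut bookkeeping and the winding dynamics of $\fai$, specifically propagating the equality $\fai(E^{(N+1)}) = E^{(N)}$ through the $\liml/\limf$ relations of \cref{defn:straight-flexible-cover}. A secondary, more routine, point is matching the window's two overhang pieces with suffixes/prefixes of the canonical one-sided expansions $u_L, u_R$, and checking that the edges $e_L, e_R$ arising there are legitimate inputs to the overlap condition (which is universally quantified in $e_0$ and uniquely determined for $e_k$).
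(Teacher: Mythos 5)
Your proposal is correct and follows essentially the same route as the paper: injectivity and the forward implication come from the two earlier lemmas, and the converse is proved by analyzing the infinite (fixed) cuts inside a finite window, using straightness to stabilize the edges bordering them, invoking the overlap hypothesis for the resulting constant sequence, and then pumping with powers of $\fai$. Your contiguity claim for fixed-cut columns is just a repackaging of what the paper establishes inside its case of two or more infinite cuts (the inner walk is $\fai$-fixed, hence consists of span-one edges), so the two arguments coincide in substance.
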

\begin{proof}
By \cref{lem:natural-injection-from-substitution-system-to-X1,nota:iota-infty,lem:surjective-implies-overlap},
 we only need to show that $\iota_{\infty}$ is surjective if every constant
 sequence $(v_1,v_2,\dotsc,v_k)$ is overlapped.
Thus, suppose that every constant
 sequence $(v_1,v_2,\dotsc,v_k)$ is overlapped.
To show that $\iota_{\infty}$ is surjective,
 take an arbitrary $\barx[1] \in \barX_1$ and $s < t$.
We need to show that there exists some $e \in \fgE$ and $n > 0$ such that
 $\barx[1][s,t]$ is a sub-walk of $\fai^n(e)$.
We consider an array system $\barx$.
We say that there exists an infinite cut at position $i$
 (i.e. just before position $i$)
 if, for all $n \bni$,
 there exists an $n$-cut at position $i$.  
Suppose that there exists no infinite cut in $(s,t]$.
Then, there exists some $n > 0$ such that there is no $n$-cut
 in $(s,t]$.
We find that $(e_n,e_n,\dotsc,e_n) = \barx[n][s,t]$ for some $e_n \in \fgE_n$.
Let $e_n$ be a copy of $e \in \fgE$.
Then,
 it is evident that $\fai^{n-1}(e)$ contains $\barx[1][s,t]$ as a sub-block.
Suppose that there exists only one infinite cut in $(s,t]$.
Let $i_0 \in (s,t]$ be the position of the infinite cut.
There exists some $n > 0$ such that there is no $n$-cut
 in $(s,i_0-1] \cup [i_0 +1,t]$.
We find that 
 $(e_{n,1},e_{n,1},\dotsc,e_{n,1} \underset{\text{cut}}{,}
 \underset{i_0}{e_{n,2}},e_{n,2},\dotsc,e_{n,2})
 = \barx[n][s,t]$ for some $e_{n,1},e_{n,2} \in \fgE_n$.
Because $\fgE_n = \fgE$, we have that $e_{n,1}, e_{n,2} \in \fgE$.
For every $m$ with $m > n$,
 it follows that $\fai(e_{m,1})(\max) = e_{m-1,1}$ and
 $\fai(e_{m,2})(\min) = e_{m-1,2}$.
Thus, by the straightness of the flexible self-cover,
 there exist $e_1,e_2 \in \fgE$ such 
 that the $e_{m,1}$ are copies of $e_1 \in \liml \fgE$ and
 the $e_{m,2}$ are copies of $e_2 \in \limf \fgE$.
Further, we have that $v = r(e_1)= s(e_2) \in \lim \fgV$ and the sequence $(v)$
 of length $1$ is a constant sequence.
By the overlapping property, we find that there exist some $e \in \fgE$
 and $n' > 0$ such that $\fai^{n'}(e)$ contains $e_1 e_2$.
Now, $\fai^{n'+n-1}(e)$ contains $\barx[1][s,t]$.
Finally, suppose that there exist at least $2$ positions of infinite cuts
 in $(s,t]$.
Let $i_1$ be the first position
 and $i_2$ be the last position of the infinite cut in $(s,t]$.
Let $n > 1$ be such that, for all $m \ge n$, the positions of the $m$-cuts in $(s,t]$
 are equal to the positions of the infinite cuts in $(s,t]$.
Then, we find that there exist $e_{n,1}, e_{n,2} \in \fgE_n$ such that
 $(e_{n,1},e_{n,1},\dotsc,e_{n,1},
 \text{(first cut)}\ \barx[n][i_1,i_2-1],\text{(last cut)}\ 
 e_{n,2},e_{n,2},\dotsc,e_{n,2}) = \barx[n][s,t]$.
From $\barx[m][i_1,i_2-1]$ with $m \ge n$,
 we obtain a walk that is denoted as $w_m$.
Thus, we have $\fai_{m,m'}(w_m) = w_{m'}$ for all $m > m' \ge n$ and 
 $\fai_{m,1}(w_m) = \barx[1][i_1,i_2-1]$ for all $m \ge n$.
From this,
 and using the assumption that the $m$-cuts do not change for $m \ge n$,
 each edge that consists of $w_m$ is mapped to a single edge of $w_{m'}$
 for $m > m' \ge n$.
By the straightness of the flexible self-cover,
 all $w_m$ $(m \ge n)$ are identical 
 copies of $w$ for some fixed walk $w$ on $\fgG$;
 in particular, we have that $\fai(w) = w$.
Let $e_{n,i}\ (i = 1,2)$ be copies of $e_i\ (i = 1,2)$.
Then, by the straightness of the flexible self-cover, it follows that
 $e_1 \in \liml \fgE$ and $e_2 \in \limf \fgE$.
Thus, by the overlapping property, there exist some $e \in \fgE$ and $n' > 0$
 such that $\fai^{n'}(e)$ contains a sub-walk $e_1\ w\ e_2$.
It follows that $\fai^{n'+n-1}(e)$ has a sub-walk $\barx[1][s,t]$,
 as desired.
\end{proof}

\subsection{Examples}\label{subsec:examples}\label{subsec:examples}

In this subsection,
 we present two substitution subshifts that can be expressed
 using stationary flexible covering models.

The Fibonacci sequence is formed by the substitution
 $\tau(a) = ab$ and $\tau(b) = a$
 (cf. for example
 \cite[Definition 2.6.1.]
 {Fogg_2002SubstitutionsInDynamicsArithmeticsAndCombinatorics}).
To describe this by the flexible self-cover $\fai : \fgG \to \fgG$,
 let $\fgV := \seb v \sen$, $\fgE := \seb e_a,e_b \sen$,
 and $\phi : \fgG \to \fgG$ be such that
 $\phi(e_a) = e_a e_b$,  $\phi(e_b) = e_a$.
It is easy to check that $\phi$ is \pdirectional.
However, this is not straight.
We consider $\phi^2(e_a) = \phi(e_a e_b) = e_a e_b e_a$ and
 $\phi^2(e_b) = \phi(e_a) = e_a e_b$, and
 define $\fai := \phi^2$.
We then have $\fai(e_a) = e_a e_b e_a$ and $\fai(e_b) = e_a e_b$
 (see \cref{fig:Fibonacci-self-cover}).
It is easy to check that $\fai : \fgG \to \fgG$ is straight.
It is evident that $\lim \fgV = \fgV = \seb v \sen$,
 $\liml \fgE = \seb e_a, e_b \sen$, and $\limf \fgE = \seb e_a \sen$.
Because $\fai^n(e_x) \ne e_x$ for $n > 0$ and $x = a,b$,
 the only constant sequence is $(v)$.
To apply \cref{prop:iota-infty-to-be-bijective},
 we have to check that both $e_a e_a$ and $e_b e_a$ appear in $\fai^n(e)$
 for some $e \in \fgE$ and $n > 0$.
Note that $e_b e_a$ has already appeared in $\fai(e_a)$.
We compute 
 $\fai^2(e_a) = \fai(e_a e_b e_a) = e_a e_b\ e_a e_a\ e_b e_a e_b e_a$.
Thus, $e_a e_a$ has also appeared.
The related substitution read is $\sigma(a) = a b a$, $\sigma(b) = a b$.
Thus, we have shown that
 $\iota_{\infty} : (X_{\sigma},T) \to (\barX_1,\barT)$
 is an isomorphism.
Because the powers $\fai^i$ are not bidirectional, 
 for the stationary flexible graph covering model
 $\fgGcal : \covrepa{\fgG}{\fai}$
 generated by $\fai$ and the sequence $n(e_a) = n(e_b) = 1$,
 the inverse limit $\invlim \fgGcal$, which we denote by $(X,f)$,
 is not a homeomorphism.
Thus, $(X,f)$ is topologically conjugate to neither
 $(\barX,\barf)$ nor $(\barX_1,\barT)$, though the latter is topologically 
 conjugate to $(X_{\sigma},T)$
 by \cref{prop:iota-infty-to-be-bijective}.
We do not investigate further, e.g. the coincidence of $(\barX,\barf)$
 with $(\barX_1,\barT)$ or $(X_{\sigma},T)$ is not studied here.
For topological conjugacy between substitution subshifts
 and stationary Bratteli diagrams,
 we refer readers to
\cite{BEZUGLYI_2009AperioSubstSysBraDiag,DURAND_1999SubstDynSysBratteliDiagDimGroup,Yassawi10Bratteli-VershikRepresentationsOfSomeOne-SidedSubstitutionSubshifts}.

\begin{figure}
\centering
\begin{tikzpicture}[scale=0.4]
\GraphInit[vstyle=Classic]
\tikzset{VertexStyle/.append style={minimum size=3pt}}
\Vertex[x=0, y=0,Lpos=90,L=$v$]{Center};
\path[arrows = {-Stealth[scale=2]}] (Center) edge [out = 150, in = -150,
 distance=7cm] node[pos=0.5,left]{$e_a = \iota(a)$} (Center);
\path[arrows = {-Stealth[scale=2]}] (Center) edge [out = 30, in = -30,
 distance=7cm] node[pos=0.5,right]{$e_b = \iota(b)$} (Center);
\node[draw=none] at (12,3) {$\sigma(a) = aba,\ \sigma(b) = ab$};
\end{tikzpicture}
\vspace{0mm}
\normalsize
\caption{Self-cover for the Fibonacci sequence.}
\label{fig:Fibonacci-self-cover}
\end{figure}

The next example is a straight flexible self-cover $\fai : \fgG \to \fgG$
 that constructs
 a substitution system with two fixed points
(see \cref{fig:self-cover-with-fixed-points}).
The vertices of the graph $\fgG = (\fgV,\fgE)$ include the left vertex $v_l$,
 middle vertex $v_m$, and right vertex $v_r$, i.e. 
 $\fgV = \seb v_l, v_m, v_r \sen$.
There exist six edges: $\fgE = \seb e_a, e_b, e_c, e_d, e_e, e_f \sen$.
The substitution read on $\fai$
 is written as in \cref{fig:self-cover-with-fixed-points}.
Thus, $\fai$ is defined as $\fai(e_a) = e_a,\ \fai(e_b) = e_a e_b e_d e_e,\
 \fai(e_c) = e_d e_e e_c e_a,\ \fai(e_d) = e_d e_e e_d e_f,
 \ \fai(e_e) = e_f e_e,
 \myand \fai(e_f) = e_f$.
It is easy to check the \bidirectionalitys condition
 and the straightness condition, in which
 $\lim \fgV = \seb v_r, v_m, v_l \sen$,
 $\lim_l \fgE = \seb e_a, e_e, e_f \sen$, and
 $\lim_f \fgE = \seb e_a, e_d, e_f \sen$.
Because of the \bidirectionalitys condition,
 if we construct the stationary flexible covering model
 $\fgGcal : \covrepa{\fgG}{\fai}$ and its inverse limit
 $\invlim \fgGcal = (X,f)$,
 then $(X,f)$ is isomorphic to the natural extension $(\hX,\hf)$
 and to the array system $(\barX,\barf)$.
\begin{figure}
\centering
\begin{tikzpicture}[scale=0.4]
\GraphInit[vstyle=Classic]
\tikzset{VertexStyle/.append style={minimum size=3pt}}
\Vertex[x=-7, y=0,Lpos=0,L=$v_l$]{Left};
\Vertex[x=0,  y=0,Lpos=0,L=$v_m$]{Mid};
\Vertex[x=7,  y=0,Lpos=180,L=$v_r$]{Right};
\path[arrows = {-Stealth[scale=2]}] (Left) edge [out = 140, in = -140,
 distance=2cm] node[pos=0.5,left]{$e_a$} (Left);
\path[arrows = {-Stealth[scale=2]}] (Left) edge [out = 50, in = 130,
 distance=2.5cm] node[pos=0.5,above]{$e_b$} (Mid);
\path[arrows = {-Stealth[scale=2]}] (Mid) edge [out = -130, in = -50,
 distance=2.5cm] node[pos=0.5,below]{$e_c$} (Left);
\path[arrows = {-Stealth[scale=2]}] (Mid) edge [out = 50, in = 130,
 distance=2.5cm] node[pos=0.5,above]{$e_d$} (Right);
\path[arrows = {-Stealth[scale=2]}] (Right) edge [out = -130, in = -50,
 distance=2.5cm] node[pos=0.5,below]{$e_e$} (Mid);
\path[arrows = {-Stealth[scale=2]}] (Right) edge [out = 40, in = -40,
 distance=2cm] node[pos=0.5,right]{$e_f$} (Right);
\node   [draw=none] (A) at (12,6)
 {$\sigma(a) = a,\ \sigma(b) = abde\ \sigma(c) = deca$,};
\node [draw=none,below=of A.west,anchor=west]
 {$\sigma(d) = dedf,\ \sigma(e)=fe,\ \sigma(f) = f$};
\end{tikzpicture}
\vspace{0mm}
\normalsize
\caption{Self-cover with fixed points.}\label{fig:self-cover-with-fixed-points}
\end{figure}

The edges $e \in \fgE$ that satisfy $\fai(e) = e$ are $e_a,e_f$.
It follows that the constant sequences are only
 $(v_l,v_1,\dotsc,v_1)$ with length $\ge 1$,
 $(v_m)$,
 and $(v_r,v_r,\dotsc,v_r)$ with length $\ge 1$.
To check the overlapping property, we only need to consider
 the following walks: $\underbrace{e_a e_a \dotsb e_a}_{\ge 2}$
 for $(v_l,v_l,\dotsc,v_l)$ with length $\ge 1$,
 $e_e e_d$ for $(v_m)$, and $\underbrace{e_f e_f \dotsb e_f}_{\ge 2}$
 for $(v_r, v_r, \dotsc, v_r)$ with length $\ge 1$.
To check $e_a e_a \dotsb e_a$, we compute $\fai^2(e_b) = e_a e_a e_b \dotsb$,
 $\fai^4(e_b) = e_a e_a \fai^2(e_b) \dotsb = e_a e_a e_a e_a e_b \dotsb$,
 and so on.
To check $e_e e_d$, we can see that $\fai(e_d) = e_d\ \ e_e e_d\ \ e_f$.
To check $e_f e_f \dotsb e_f$,
 we compute $\fai^n(e_e) = e_f e_f \dotsb e_f e_e$.
Thus, this straight flexible self-cover satisfies the overlapping property.
Therefore, $(X_{\sigma},T)$ is
 isomorphic to $(\barX_1,\barT)$.
From the calculation of $\sigma^n(a)$ and $\sigma^n(f)$,
 we can easily see that $(X_{\sigma},T)$ has fixed points
 $(\dotsc,a,a,a,\dotsc)$ and $(\dotsc,f,f,f,\dotsc)$.
If we construct $\wgGcal : \covrepa{\wgG}{\fai}$ from $\fgGcal$,
 then the copies $e_{n,b}, e_{n,c}, e_{n,d}, e_{n,e}$ on $\wgG_n$
 of $e_b, e_c, e_d, e_e$ for $n > 0$ have lengths
 $\to \infty$ for $n \to \infty$.

For this example,
 it is possible to conclude that $(\barX,\barf)$ is isomorphic
 to $(\barX_1,\barT)$, as shown by the next lemma.
Thus, we can conclude that $\invlim \fgGcal = (X,f)$ is isomorphic to
 $(X_{\sigma},T)$.
\begin{lem}\label{lem:example2-isomorphic-to-barX}
According to the flexible self-cover
 in \cref{fig:self-cover-with-fixed-points},
 it follows that $(X_{\sigma},T)$ is isomorphic to $(\barX,\barf)$.
\end{lem}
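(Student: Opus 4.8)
The plan is to reduce the statement to the \emph{recognizability} of the flexible self-cover $\fai$ on the subshift it generates, and then to prove that recognizability by hand, using the two fixed edges $e_a$ and $e_f$ as anchors.

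First I would record the reductions. By \cref{lem:natural-injection-from-substitution-system-to-X1}, \cref{nota:iota-infty} and \cref{prop:iota-infty-to-be-bijective} — whose hypothesis (all constant sequences overlapped) has already been verified for this $\fai$ — the map $\iota_{\infty}\colon (X_\sigma,T_\sigma)\to(\barX_1,\barf_1)$ is an isomorphism. Hence it suffices to show that the factor map $\pi_1\colon(\barX,\barf)\to(\barX_1,\barf_1)$ is an isomorphism; being continuous, surjective and equivariant between compact spaces, it is an isomorphism as soon as it is injective. An array $\barx\in\barX$ is, by construction, determined by its rows $\barx[n]$ ($n\benonne$), and for $n\bpi$ the first row $\barx[1]$ is obtained from $\barx[n]$ by refining each edge $e$ occurring in $\barx[n]$ to the walk $\fai^{\,n-1}(e)$ and re-cutting everywhere; equivalently, under the identification $\iota$ (and $\wgG_1=\fgG$), the walk $\barx[1]$ is the image of the walk $\barx[n]$ under $\sigma^{\,n-1}$, and the positions of the $n$-cuts of $\barx$ are exactly the boundaries between consecutive $\sigma^{\,n-1}$-blocks of $\barx[1]$. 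Therefore $\pi_1$ is injective once we know that every $x\in X_\sigma$ admits a \emph{unique} decomposition into consecutive blocks of the form $\sigma(u)$, $u\in A$: iterating, the $\sigma^{\,n-1}$-decomposition of $\barx[1]$ is then unique, so both the $n$-cut positions and the edge contents of $\barx[n]$ (each block $\fai^{\,n-1}(e)$ determining its unique source edge $e$, since $\fai$ is injective on edges) are functions of $\barx[1]$; hence $\barx$ is recovered from $\barx[1]$.

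The core step, and the main obstacle, is thus the recognizability of $\sigma$ on $X_\sigma$. Since $\sigma$ is not primitive ($a$ and $f$ are absorbing) and $X_\sigma$ contains the fixed points $a^{\Z}$ and $f^{\Z}$, Mossé's recognizability theorem does not apply and a direct argument is needed. The plan is to inspect the six images $\sigma(a)=a$, $\sigma(b)=abde$, $\sigma(c)=deca$, $\sigma(d)=dedf$, $\sigma(e)=fe$, $\sigma(f)=f$: the two-letter word $aa$ (resp.\ $ff$) never occurs inside a single image $\sigma(u)$, so in any $x\in X_\sigma$ every occurrence of $aa$ or $ff$ must straddle a block boundary; consequently the maximal runs of $a$'s and of $f$'s in $x$ are cut at every position, which pins down all block boundaries inside and immediately around such runs. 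Away from those runs, the letters $a$ and $f$ occur in the remaining images only at fixed offsets (the single $a$ in $\sigma(b)=abde$, the leading and trailing $a$ of $\sigma(c)=deca$, the trailing $f$ of $\sigma(d)=dedf$ and of $\sigma(e)=fe$), and one propagates the anchors outward; the delicate point is that the word $ed$ appears both internally in $\sigma(d)=dedf$ and across the boundaries coming from the walks $bc$, $bd$, $ec$, $ed$, so a bounded amount of surrounding context must be read to decide which — and straightness of $\fai$ is precisely what guarantees that these runs and offsets behave coherently from one level to the next, so that the induction on $n$ is legitimate. Carrying out this finite case analysis yields uniqueness of the $\sigma$-decomposition.

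Putting the pieces together: $\pi_1$ is injective, hence a topological conjugacy, and therefore $(\barX,\barf)\cong(\barX_1,\barf_1)\cong(X_\sigma,T_\sigma)$, which is the assertion of the lemma. As a byproduct, combining this with the \bidirectionalitys of $\fai$ and the theorem identifying $(\barX,\barf)$ with the natural extension of $\invlim\fgGcal$, one also obtains that $\invlim\fgGcal$ is isomorphic to $(X_\sigma,T_\sigma)$, as remarked after the statement.
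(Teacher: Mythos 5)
Your route is in substance the paper's own: granting that $\iota_{\infty}$ is bijective (which the text establishes just before the lemma), the statement reduces to showing that the first row of an array determines all higher rows, i.e.\ to unique $\sigma$-decomposability (recognizability) of the rows, proved by a direct finite analysis; the paper does exactly this, arguing for each $n$ that $\barx[n]$ determines $\barx[n+1]$, while you phrase it as injectivity of $\pi_1$ obtained by iterating recognizability on $X_{\sigma}$. The gap is that the decisive combinatorial step is only announced, not executed: you correctly isolate the ambiguity (the word $ed$ occurs inside $\sigma(d)=dedf$ and also across block boundaries) and appeal to ``a bounded amount of surrounding context'' and to straightness, but you never say what the context is or why it suffices, and straightness is not the operative point here. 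The actual check is short and you should record it: an occurrence of $b$ (resp.\ $c$) can only sit at its fixed position inside $\sigma(b)=abde$ (resp.\ $\sigma(c)=deca$), which forces that whole block and a $b$ (resp.\ $c$) one level up, and these nested symbols grow on both sides to cover all of $\Z$; if neither $b$ nor $c$ occurs, then by the two-letter language of $\sigma$ either the row is the constant row $a^{\Z}$ or $f^{\Z}$, or only $d,e,f$ occur, in which case the admissible blocks are $dedf$, $fe$, $f$ and the single letter to the left of each $e$ ($d$ versus $f$, since $ee$ is not in $\sL(\sigma)$) already decides its block, the remaining $f$'s being $\sigma(f)$-blocks. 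So the ``bounded context'' is one letter, once the $b$, $c$, $a$ cases are dispatched first. Two small slips in your sketch: the $f$ in $\sigma(e)=fe$ is its leading letter, not trailing, and $bc$ does not belong to $\sL(\sigma)$, so boundary occurrences of $ed$ arise only from $bd$, $ec$, $ed$. Finally, your iteration over levels tacitly needs that the de-substituted sequence again lies in $X_{\sigma}$; this follows from stationarity of the covering (the rows from level $2$ upward form an array for the same covering, and $\barX_2$ is identified with $\barX_1=\iota_{\infty}(X_{\sigma})$), a point the paper avoids by running the same decoding argument directly at every level $n$.
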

\begin{proof}
We establish the proof by showing that, for all $n \bpi$,
 from any $\barx[n] \in \barX_n$, we can recover
 $\barx[n+1]$ in a unique way.
Because the method we use here is the same for all $n \bpi$,
 we only show the case in which $n = 1$.
Let $x_1 = \barx[1]$.
First, suppose that only $e_a$ appears in $x_1$.
Then, clearly, we can find a unique $\barx \in \barX$.
The same holds in the case that only $e_f$ appears in $x_1$.
We then exclude these two cases.
Second, suppose that $e_b$ appears in $x_1$ at position $i$.
Then, it follows that $x_1[i-1,i+2] = e_a e_b e_d e_e$,
and so $\barx[2][i-1,i+2] = e_{2,b} e_{2,b} e_{2,b} e_{2,b}$.
Again, we obtain an occurrence of a copy of $e_b$ in $\barx[2]$ in $[i-1,i+2]$.
Thus, we can clarify all the $\barx[2][i-1-1,i+2 + 4 + 2] = \barx[2][i-2,i+8]$.
Further, we have that $\barx[3][i-2,i+8] = e_{3,b} \dotsb e_{3,b}$.
This clarifies all the $x_1[i-2,i+8]$.
In this way, we can identify all of $x_1$,
 and all of $\barx[2], \barx[3], \dotsc$, thus all of $\barx$.
Third, suppose that $e_c$ appears in $x_1$.
Because it must be in the form $e_d e_e e_c e_a$, with an identical argument,
 we can identify the unique $\barx \in \barX$.
Thus, we assume that $x_1$ does not contain $e_b$ or $e_c$.
In this case, $e_a$ does not appear either.
Thus, we need to consider the case in which
 only $e_d$, $e_e$, and $e_f$ appear.
In this case, in $\barx[2]$, only $e_{2,d}$, $e_{2,e}$, and $e_{2,f}$ appear.
In $x_1$, the edge $e_e$ appears in the form $e_f e_e$ or $e_d e_e e_d e_f$.
This form can easily be identified by looking at the left-hand side of $e_e$.
To decode this, we first decode all the $e_d e_e e_d e_f$,
 to get $e_d e_d e_d e_d$ at the same position in $\barx[2]$.
All the other occurrences of $e_e$ remain in the form $e_f e_e$.
Thus, these occurrences are decoded in $\barx[2]$ as $e_e e_e$,
which means that we could have decoded into $\barx[2]$ uniquely at
 the positions where $e_e$ appears.
Now, it is easy to see that none of the $e_d$ remains.
The remaining $e_f$ in $\barx[1]$ is decoded as $e_f$ in $\barx[2]$ at the 
 same position.
Thus, from $x_1$, we could have recovered $\barx[2]$ uniquely, as desired.
\end{proof}

\subsection{Construction of transitive substitution subshifts}
\label{subsec:transitive-substitution}
In this subsection, we discuss a way of constructing some class of transitive 
 substitution subshifts.
Firstly, we show that
 if there exist a vertex $v \in \lim \fgV$ and an edge
 $e \in s^{-1}(v) \cap \lim_f \fgE$
 such that $\fai(e)$ passes every element of $\fgE$ at least once,
 then $(X,f) = \lim \fgGcal$ has a positively transitive point.
To see this, let $v_n \in \wgV_n$ ($n \bpi$) be the copies of $v$.
Then, the point $p := (v_0, v_1, v_2, \dotsc)$ is the required
 positively transitive point.
It is easy to see that if $(X,f)$ is positively topologically transitive,
 then the natural extension is also positively topologically transitive.
Let $\fai' : \fgG' \to \fgG'$ and $\fai'' : \fgG'' \to \fgG''$
 be straight flexible self-covers
 with the substitution reads $\sigma : A' \to A'^+$
 and $\sigma'' : A'' \to {A}''^+$ on them.
We consider the case in which the injections
 $\iota'_{\infty} : (X'_{\sigma},T) \to (\barX'_1,\barT)$
 and ${\iota''}_{\infty} : ({X''}_{\sigma},T) \to ({\barX}''_1,\barT)$
 are both bijective.
Take and fix vertices $u' \in \lim \fgV'$ and $u'' \in \lim \fgV''$
 and edges $e'_1 \in s^{-1}(u') \cap \lim_f \fgE'$ and
 $e''_1 \in s^{-1}(u'') \cap \lim_f \fgE''$.
We assume that there exist finite walks $w' = e'_1 \dotsc$
 and $w'' = e''_1 \dotsc$ that satisfy $r(w') = u'$ and $r(w'') = u''$;
 $w'$ passes all of $\fgE'$ and $w''$ passes all of $\fgE''$.
We make a new flexible self-cover $\fai : \fgG \to \fgG$ that contains
 the above two.
We take $\fgV = \fgV' \cup \fgV''$ as a disjoint union
and make new edges $e'$ from $u'$ to $u''$
 and $e''$ from $u''$ to $u'$.
We take $\fgE = \seb e', e''\sen \cup \fgE' \cup \fgE''$ as a disjoint union.
The covering maps restricted on $\fgE'$ and $\fgE''$ coincide with
 $\fai'$ and $\fai''$, respectively.
We define $\fai(e') := w' e' e'' e' w''$
 and $\fai(e'') := w'' e'' e' e'' w'$.
It is easy to check that $\fai$ is a self-cover.
Evidently, both $\fai(e'), \fai(e'')$ pass all $\fgE$.
Finally, we consider the self-cover $\fai^2$ instead of $\fai$ itself.
Then, it is straightforward to check that
 $\fai^2$ is straight
 and $\lim \fgV = \lim \fgV' \cup \lim \fgV''$.
It is easy to check that no new constant sequences arise.
Therefore,
 we have a bijection $\iota_{\infty} : (X_{\sigma},T) \to (\barX_1,\barT)$.
Because $(\barX,\barf)$ is transitive
 and the factor of a transitive dynamical system is transitive,
 $(\barX_1,\barT)$ and $(X_{\sigma},T)$ are transitive, as desired.

\vspace{3mm}

Finally, at the end of this paper, we must note that
 the satisfactory regularity condition on the Bratteli--Vershik models
 may not be the $\bl$-periodicity-regulation property.
It may be the case that the Bratteli--Vershikizability plays an important role in this.
Many substitutions
 that do not arise from flexible self-covers
 may exist (see \cref{defn:stationary-flexible-graph-covering}). 
We could not clarify the substitution subshifts
 that are canonically identical to $(\barX_1,\barT)$ using
 a flexible self-cover.
The topological rank that was determined for invertible
 Cantor minimal systems by Downarowicz and Maass
 in \cite{DOWNAROWICZ_2008FiniteRankBratteliVershikDiagAreExpansive}
 can be extended to all invertible zero-dimensional systems.
In this regard, we have yet to answer the following question:

\noindent \textbf{Question.} Does there exist an invertible minimal Cantor
 system whose topological ranks differ?

\vspace{5mm}

\noindent
\textsc{Acknowledgments:}
The author would like to thank the anonymous referee(s) of our previous
 submissions for their insightful advice,
on the basis of which this paper has improved considerably in comparison with the first version.
We also would like to extend our gratitude to
 Fumiaki~Sugisaki and Masamichi~Yoshida for valuable discussions
 during the revision process.
Finally, we would like to thank Editage (www.editage.jp)
 for providing English-language editing services
 during the total revision processes.
This work was partially supported by JSPS KAKENHI
(Grant Number 16K05185).

\providecommand{\bysame}{\leavevmode\hbox to3em{\hrulefill}\thinspace}
\providecommand{\MR}{\relax\ifhmode\unskip\space\fi MR }
\providecommand{\MRhref}[2]{%
  \href{http://www.ams.org/mathscinet-getitem?mr=#1}{#2}
}
\providecommand{\href}[2]{#2}


\end{document}